%% file: SIAM_MS_WHHMM.tex
\documentclass[hidelinks,onefignum,onetabnum]{siamart251216}

\usepackage{amsfonts}
\usepackage{graphicx}
\usepackage{epstopdf}
\usepackage{algorithmic}
\ifpdf
  \DeclareGraphicsExtensions{.eps,.pdf,.png,.jpg}
\else
  \DeclareGraphicsExtensions{.eps}
\fi

\usepackage{amsfonts}
\usepackage{amssymb}
\usepackage{enumitem}

\newsiamremark{remark}{Remark}
\newsiamremark{hypothesis}{Hypothesis}
\crefname{hypothesis}{Hypothesis}{Hypotheses}
\newsiamthm{claim}{Claim}
\newsiamremark{fact}{Fact}
\crefname{fact}{Fact}{Facts}
\newsiamthm{prop}{Proposition}

\headers{WaveHoltz HMM}{A. Rotem, O. Runborg, and D. Appel\"{o}}

\title{The WaveHoltz Heterogeneous Multiscale Method.\thanks{Submitted to the editors \date.
\funding{DA is supported by the U.S. Department of Energy, Office of Science, Advanced Scientific Computing Research (ASCR), under Award Number DE-SC0025424. This material is based upon work supported, in part, by the National Science Foundation under Grant No. DMS-2436319 and Virginia Tech. This material is based upon work supported by the National Science Foundation under Grant No. DMS-2424139 while the third author was in residence at the Simons Laufer Mathematical Sciences Institute in Berkeley, California, during the Fall 2025 semester.}}}

\author{Amit Rotem\thanks{Department of Mathematics, Virginia Tech
  (\email{arotem@vt.edu}).}
  \and Olof Runborg\thanks{Department of Mathematics, KTH Royal
  Institute of Technology (\email{olofr@kth.se}).}
  \and Daniel Appel\"{o}\thanks{Department of Mathematics, Virginia
Tech (\email{appelo@vt.edu}).}}

\usepackage{amsopn}
\DeclareMathOperator{\diag}{diag}

\newcommand{\N}{\mathbb{N}}

\newcommand{\R}{\mathbb{R}}

\renewcommand{\O}{\mathcal{O}}

\renewcommand{\vec}[1]{\mathbf{#1}}

\newcommand{\grad}{\nabla}

\ifpdf
\hypersetup{
  pdftitle={The WaveHoltz Heterogeneous Multiscale Method},
  pdfauthor={A. Rotem, O. Runborg, and D. Appel\'{o}}
}
\fi

\begin{document}

\maketitle

\begin{abstract}
  We consider the numerical solution of
  the wave equation in
  materials with rapidly varying coefficients, and time harmonic
  sources. For these problems, direct discretization is prohibitively
  costly, and instead multiscale methods are used. There are several
  multiscale methods that directly discretize in the frequency
  domain. In this work we instead start in the time-domain and
  combine a finite difference Heterogeneous Multiscale Method (HMM)
  for the wave equation with the WaveHoltz method. Each WaveHoltz
  iteration marches the  wave equation towards the time-periodic
  Helmholtz solution. The advantages of the WaveHoltz method relative
  to traditional Helmholtz solvers carry over directly to the
  multiscale problems considered here. Since, in addition, the
  time-domain solver does not artificially impose boundary conditions
  on the micro-scale problems, no boundary errors from the
  micro-scale problems are present in the homogenized frequency domain solution.
\end{abstract}

\begin{keywords}
  WaveHoltz, heterogeneous multiscale method, Helmholtz, waves, homogenization.
\end{keywords}

\begin{MSCcodes}
  35J05, 65N06, 65N12, 74Q10
\end{MSCcodes}

\section{Introduction}

This paper considers the Helmholtz equation,
\begin{gather} \label{eq:helmholtz}
  \nabla \cdot (a_\varepsilon(x) \nabla u) + \omega^2 u = b(x),
  \qquad x\in\Omega,
\end{gather}
with Dirichlet or Neumann boundary conditions. Here the computational
domain $\Omega\subset\R^d$ ($d = 1$ or 2) is a smooth, bounded, and
simply connected domain, $b\in L^2(\Omega)$
is independent of
$\varepsilon$ and the Helmholtz frequency
 $\omega$ is assumed to be non-resonant.
The scalar valued function $a_\varepsilon(x)$ is uniformly bounded
and positive. 
We are interested in the case where
$a_\varepsilon$ is rapidly varying, i.e.\mbox{} its smallest scale
$\varepsilon$ is much smaller than the longest wavelengths  $\propto
\| \sqrt{a_\varepsilon} \|_{L^{\infty}}/ \omega$ of the
solution. Our goal is to compute approximations to the solution $u$
for problems with scale separation. 
By scale separation we
mean that the solution $u(x)$ can be expanded in powers of $\varepsilon$ as
\[
  u(x) = \bar{u}(x) + \varepsilon u_1(x,x / \varepsilon) + \O(\varepsilon^2),
\]
where, the zeroth order term $\bar{u}$ does not depend on
$\varepsilon$. In the particular case when $a_\varepsilon(x) = a(x,
x/\varepsilon)$ is periodic in the fast variable $x/\varepsilon$, the
zeroth order term $\bar{u}$ can be shown \cite{Marchenko2005-ov,
Lions-Periodic-Structures, Allaire-Two-Scale-Convergence} to satisfy
a homogenized equation
\begin{gather}
  \nabla\cdot(\bar{a}(x)\nabla\bar{u}) + \omega^2 \bar{u} = b(x),
  \qquad x\in \Omega. \label{eq:hom-helmholtz}
\end{gather}
Note that $\bar{a}$ {\em does not} depend on $\varepsilon$ and thus
$\bar{u}$ need only be resolved at the length scales of the
homogenized coefficient $\bar{a}$, the wavelengths associated with
the frequency $\omega$, and the scales of the forcing $b$. When these
are large compared to $\varepsilon$ and when $\varepsilon$ is small
enough for the  homogenized solution $\bar{u}$ to be a good
approximation of $u$, homogenization provides an accurate
approximation to $u$ at a low computational cost.

However, solving or discretizing \eqref{eq:hom-helmholtz} requires explicit
knowledge of $\bar{a}$. When $a_\varepsilon$ is periodic and depends
only on the fast scale, i.e. $a_\varepsilon(x) = a(x/\varepsilon)$,
the homogenized coefficient $\bar{a}$ can be computed by solving the
so called cell problem, an associated elliptic problem with periodic
boundary conditions, \cite{Lions-Periodic-Structures}. When
$a_\varepsilon$ is $\varepsilon$-periodic but also has a slowly
varying component, ($a_\varepsilon(x) = a(x, x/\varepsilon) = a(x,
  (x+x_{\rm per)}/\varepsilon)$, the cell problem will depend on the
  slow variable $x$ adding a dimension of complexity to the problem.
  A broad goal of numerical homogenization is to relax the condition
  of $\varepsilon$-periodicity of $a_\varepsilon(x)$.

  \subsection{Literature Review}

   Multiscale methods have been extensively studied in their
  application to partial differential equations. For classical
  elliptic problems with rapidly varying coefficients, 
  the heterogeneous multiscale method (HMM), the multiscale
  finite element method and its generalizations (MsFEM and GMsFEM,
  respectively) and local orthogonal decomposition (LOD) have been
  particularly successful \cite{hmm-review-engquist,Efendiev2009-fd,
  Altmann_Henning_Peterseim_2021}.
These approaches have also been used for wave equations and 
to a lesser extent, to the Helmholtz equation and time harmonic Maxwell equations.
The numerical homogenization of the
  Helmholtz equation introduces additional difficulties as
  the equation is no longer coercive, and may not be well posed for all
frequencies.

  MsFEM departs from classic finite element methods by constructing
  basis functions for representing the solution on a coarse mesh from
  the solutions of local problems on each element. Since MsFEM
  directly discretizes the multiscale PDE, it does not rely on
  assumptions of scale separation or periodicity in the coefficients.
  However, the imposition of artificial boundary conditions on the
  local problems can introduce errors. To compensate for these
  errors, oversampling techniques are used \cite{Efendiev2009-fd}. A
  popular extension of MsFEM is GMsFEM (Generalized MsFEM). In GMsFEM
  several multiscale basis functions are chosen by solving local
  eigenvalue problems on each element. Some of these methods solve
  the local eigenvalue problem using snapshots and the proper
  orthogonal decomposition, whereas others directly discretize the
  coarse element using fine scale basis functions. The choice of
  eigenvalue problem, snapshots, and oversampling region all play a
  critical role in the quality of multiscale basis for approximating
  the coarse scale solution. GMsFEM often provide a substantial
  improvement in accuracy over classic MsFEM \cite{Chung2016-bk,Chung2023-lv,
  Chung2018-zw,  Efendiev2009-fd}.
MsFEM-type methods have also been developed for wave propagation problems; examples include works in the time domain \cite{ChungEfendievLeung2014,GaoFuChung2018} and in the frequency domain \cite{Chaumont-multiscale-helmholtz,Chen-exponential-multiscale}.

  LOD methods approximate multiscale solutions by introducing an
  interpolation operator that connects fine and coarse approximation
  spaces. From the kernel of this operator, a localized multiscale
  space is defined, and basis functions are computed by solving local
  problems. The global solution of the PDE is then approximated using
  these localized basis functions. The accuracy of this approach
  depends on the oversampling region around each coarse element, and
  the error decays exponentially with respect to the radius of the
  oversampling region. LOD has proven particularly effective for
  problems with highly oscillatory or rough coefficients
\cite{Altmann_Henning_Peterseim_2021,Malqvist2020-ej}. Both MsFEM
  and LOD ultimately require resolution of the entire domain, albeit
  through decoupled local computations.
Extensions of LOD to time-domain wave equations can be found, for example, in \cite{Abdulle2016-hm,KrumbiegelMaier,MaierPetersheim}, while LOD-type methods for Helmholtz problems are considered in \cite{BrownGallistlPeterseim2017,HauckPeterseim2022}.

  HMM specializes to problems in homogenization by implicitly
  approximating the homogenized operator, rather than the solution
  directly, through localized micro-scale simulations \cite{hmm-review-engquist}. 
The main idea is to assume the form of the effective equation is
  known and discretize it directly.
For coercive
  elliptic equations, several variants of HMM have been developed,
  differing primarily in the choice of macroscale model and
numerical methods for the micro and macro scales
 \cite{FD-HMM, FEM-HMM-E,ABDULLE20091081}.
In all cases, these methods are
  characterized by a coupling of local problems to a global problem.
  The local problems of HMM are usually very ``sparse'' in the
  domain, that is, the solution is only fully resolved on a small
  fraction of the domain. This is possible because of the scale
  separation ansatz of HMMs. Like MsFEM and LOD, HMM imposes an
  artificial truncation of the local problems which introduces
  additional approximation errors. Unlike MsFEM and LOD, HMM is
  generally not well suited to problems without scale separation.
HMM methods for wave equations have  been developed for both short and long time scales, including approaches that incorporate higher-order dispersive terms \cite{wave-hmm, FE-HMM, ArjmandRunborg2014, AbdulleGroteStohrer2014}. Notable approaches for time-harmonic equations can be found in \cite{CIARLET2014755, Henning-maxwell, hmm-helmholtz-Ohlberger}.

Finally, we note that multiscale wave equations have also been treated using
several other frameworks, including equation-free methods
\cite{ArjmandKreiss2018} and harmonic coordinates
\cite{OwhadiZhang}.

  The WaveHoltz HMM (WHMM) method that we propose 
  combines an HMM methods for the wave equation
  with the iterative method WaveHoltz \cite{WaveHoltz} for  solving Helmholtz equations.
It  
  can be categorized as a
  numerical homogenization method for \eqref{eq:helmholtz}. The WHMM
  is a fixed point iteration which converges to $\bar{u}$, the
  solution to \eqref{eq:hom-helmholtz}. This fixed point iteration
  can be accelerated by Krylov methods. Further, it inherits several
  desirable properties from WaveHoltz and heterogeneous multiscale
  methods for the wave equation making it an effective iterative
  solver for approximating the solution to \eqref{eq:hom-helmholtz}.
  In particular, the finite speed of propagation in the wave equation
  eliminates errors associated with non-physical boundary conditions
  on the fine scale seen in other methods for elliptic homogenization
  problems. Additionally, the HMM in \cite{wave-hmm} introduces the
  use of local averaging kernels over the micro-scale problem which
  exponentially improve the approximation of the homogenized
  coefficient for smooth problems. Further, when the boundary
  conditions on $\partial\Omega$ are energy conserving (Dirichlet,
  Neumann, periodic), the linear system associated with the fixed
  point iteration is symmetric positive definite, thus the fixed
  point iteration can be accelerated by the conjugate gradient method.

  We outline the paper as follows: in Section
  \ref{sec:whmm-continuous} we describe the WaveHoltz HMM beginning
  with the Heterogeneous Multiscale Method for the wave equation,
  including both its continuous formulation and finite difference
  discretization in Section \ref{sec:wave-hmm-continuous}, followed
  by the continuous WaveHoltz iteration in Section
  \ref{sec:waveholtz-continuous}. We then combine the discretized
  wave equation with the discrete WaveHoltz iteration to obtain the
  WaveHoltz Heterogeneous Multiscale Method in Section
  \ref{sec:discrete-method}, and describe how the homogenized
  coefficient can be approximated offline in Section
  \ref{sec:offline-coefficient}. In Section \ref{sec:1d-analysis} we
  analyze the convergence of the method in a single dimension and a
  periodic coefficient $a_\varepsilon(x)$. Finally, in Sections
  \ref{sec:experiments1D} and \ref{sec:experiments2D} we provide
  numerical examples in one and two dimensions, respectively, for
  problems where the exact homogenized coefficient $\bar{a}$ is known.

  \section{The WaveHoltz Heterogeneous Multiscale
  Method}\label{sec:whmm-continuous}

  The aim of the WaveHoltz Heterogeneous Multiscale Method is to solve
  \eqref{eq:hom-helmholtz} at $\varepsilon$-independent cost, without a
  priori knowledge of $\bar{a}$ and without resolving $a_\varepsilon$
on the entire domain. WHMM has two core components: 1.) A HMM for the
wave equation (here we use \cite{wave-hmm}, but see also
\cite{FE-HMM}), and 2.) the WaveHoltz iteration \cite{WaveHoltz}.

\subsection{The Heterogenous Multiscale Method for the Wave Equation}
\label{sec:wave-hmm-continuous}
We first describe how HMM works for the time-dependent wave equation
of the following form
\begin{gather}\label{eq:wave}
w_{tt} = \nabla\cdot(a_\varepsilon(x)\nabla w) +f(t,x), \\
w(x,0) = v_0(x), \quad w_t(x,0) = v_1(x), \notag
\end{gather}
with boundary conditions suitable for the problem at hand. The
coefficient $a_\varepsilon$ is assumed to be rapidly varying with
smallest scale $\varepsilon$, as discussed in the introduction, and
$f$ is a source function independent of $\varepsilon$. An HMM method for the wave
equation \eqref{eq:wave} finds an approximation of the homogenized
solution $\bar{w}$, that is assumed to satisfy a coarse scale wave
equation of the form
\begin{gather}\label{eq:eff-wave-eq}
\bar{w}_{tt} = \nabla\cdot F(t,x) +f(t,x), \\
w(x,0) = v_0(x), \quad w_t(x,0) = v_1(x), \notag
\end{gather}
where $F$ is an unknown macroscopic flux function, the form of which
depends on how the coefficient  $a_\varepsilon$ is homogenized. For
instance, if $a_\varepsilon$ is $\varepsilon$-periodic, then
$F=\bar{a}\nabla w$,
where $\bar{a}$ is the same homogenized coefficient that appears in
\eqref{eq:hom-helmholtz}, see \cite{wave-hmm}. In WHMM, 
$f(t,x)=-b(x)\cos(\omega t)$ and
the initial
data $v_0$, $v_1$ are specified by the WaveHoltz iteration; see Section \ref{sec:waveholtz-continuous}.

The solution $\bar{w}$ to \eqref{eq:eff-wave-eq} is computed on a
coarse finite difference mesh, which is coupled to localized,
micro-scale, solutions of \eqref{eq:wave} on fully resolved grids
with a fixed number of grid points independent of $\varepsilon$. The
solution $\bar{w}$ is interpolated on the macro-scale and used as
initial conditions to the local micro-scale solver. The flux
$a_\varepsilon\grad w$ of the localized solution is averaged over
short time and length scales to approximate the macroscopic flux $F$
in \eqref{eq:eff-wave-eq}.

Precise specification of a HMM for the wave equation requires
specification of the discretization used. We will provide a semi-discretization below
in one dimension. In higher dimensions we prefer to use a staggered
grid finite difference method, such as described in
\cite{aniso-diffusion, mimetic-fd, hom-fd}, which ensures symmetry of
the finite difference operator when the homogenized coefficient is
anisotropic. We thus consider the one dimensional wave equation for
$w = w(x,t)$,
\begin{gather}
w_{tt} = \left( a_\varepsilon(x) w_x  \right)_x + f(t,x), \qquad
x\in(0,L), \; t>0, \label{eq:wave-1d} \\
w(0,t) = w(L, t) = 0, \notag \\
w(x,0) = v_0(x), \qquad w_t(x,0)= v_1(x). \notag
\end{gather}
While other boundary conditions are of practical interest, for
brevity we only consider homogeneous Dirichlet conditions here as
extensions to Neumann and Robin boundary conditions are well
documented for finite difference methods, see for example \cite{LeVeque2007-oq}.

\subsubsection{Macro-scale Solver} \label{sec:macro}
Let the coarse finite difference grid with $M_x+2$ points be $X_j =
Hj = \frac{L}{M_x+1}j,$ \mbox{$j=0,\dots,M_x+1$}. Let $\bar{w}_j(t)$
be a grid function approximating $\bar{w}(X_j, t)$, the solution to
\eqref{eq:eff-wave-eq}. Then the semi-discretization in space is:
\begin{subequations} \label{eq:wave-semi-discrete}
\begin{gather}
  \frac{d^2 \bar{w}_j}{d t^2} = \frac{F_{j+\frac{1}{2}}(t) -
  F_{j-\frac{1}{2}}(t)}{H} + f(t,X_j), \qquad j = 1, \dots, M_x, \; t > 0, \\
  \bar{w}_0(t) = \bar{w}_{M_x+1}(t) = 0, \qquad t>0, \\
  \bar{w}_j(0) = v_0(X_j), \qquad \frac{d \bar{w}_j}{d t}\bigg|_{t=0} = 0.
\end{gather}
\end{subequations}
Here $F_{j+\frac{1}{2}}(t)$ is an approximation of
$F(t,X_{j+\frac{1}{2}})=F(t,X_j + H/2)$ and is the output of a fine
grid micro simulation centered at $X_{j+\frac{1}{2}}$ which we now describe.

\subsubsection{Micro-scale Solver} \label{sec:micro-solver}

\begin{figure}[]
\centering
\includegraphics[width=0.5\textwidth]{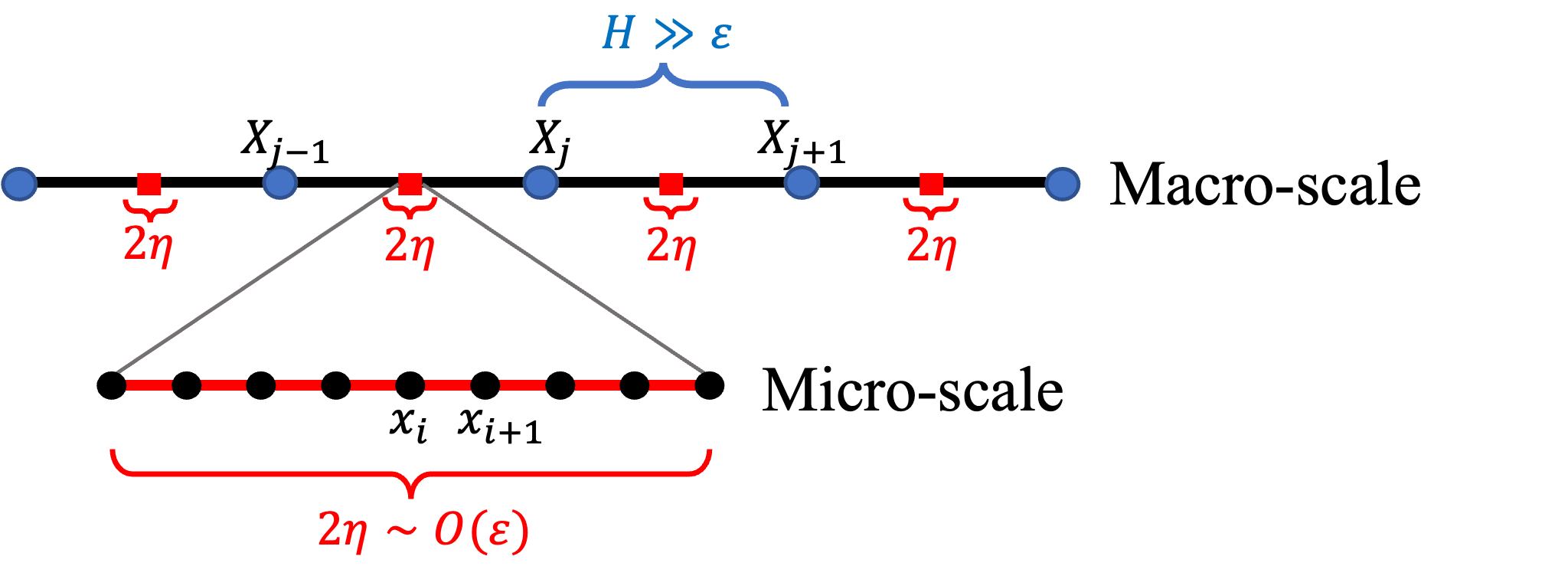}
\caption{The macro-scale mesh and micro-scale mesh.}
\label{fig:macro-micro}
\end{figure}

The micro-scale solver is a local discretization of
\eqref{eq:wave-1d} centered at $X_{j+\frac{1}{2}}$. Let $\eta, \tau
\sim O(\varepsilon)$. On the domain $(x,t)\in [-\eta, \eta]\times
[-\tau, \tau]$ we solve the original variable coefficient wave
equation (without forcing)
\begin{gather}
w_{tt} = \bigl(a_\varepsilon(X_{j+\frac{1}{2}} + x) w_x\bigr)_x,
\label{eq:fine-scale-wave} \\
w(x,0) = \bar{w}_{j+\frac{1}{2}} + P_{j+\frac{1}{2}} \cdot x, \qquad
w_t(x,0) = 0. \notag
\end{gather}
Here, at time $t = 0$, the microscale solution is taken to be the
linear interpolant of $\bar{w}_j$ and $\bar{w}_{j+1}$
\begin{equation}\label{eq:Pjdef}
P_{j+\frac{1}{2}} := \frac{\bar{w}_{j+1} - \bar{w}_{j}}{H} \approx
\bar{w}_x (X_{j+\frac{1}{2}}).
\end{equation}
We take the parameters $\eta, \tau \ll H$ so that only a small part
of the domain is included in each micro solve, but sufficiently
larger than (an $O(10)$ multiple of) $\varepsilon$ to effectively
average away local oscillations of the solution. Note the time-interval $[-\tau, \tau]$ includes negative time. This poses no difficulties because the solution $w$ to \eqref{eq:fine-scale-wave} is symmetric in time. The coarse and fine
grids are visualized in Figure \ref{fig:macro-micro}. The solution
varies smoothly on each micro-scale domain, so it can be resolved
with a modest number of discretization points.

Since the homogenized solution propagates at a wave speed not
exceeding $\|\sqrt{a_\varepsilon}\|_{L^\infty}$, no information
from neighboring grid-points $X_{j-1/2}$ and $X_{j+3/2}$ propagates
into the small scale domain over the time interval $[-\tau,\tau]$
provided that $\eta$ and $\tau$ are selected to be sufficiently
small. Hence, a logical choice of boundary conditions are open
(outflow) boundary conditions which can be well approximated as a
Robin condition, or a PML. At a slight increase in computational
cost, the added algorithmic complexity introduced by a PML can be
avoided by simply extending the computational domain to the domain of
dependence $[-\hat{\eta}, \hat{\eta}]$ where
$\hat{\eta} = \eta + \tau \cdot \|\sqrt{a_\varepsilon}\|_{L^\infty}.$

The solution $w(x,t)$ can be approximated with any discretization
method. Here we use the same finite difference discretization. Define
the fine scale finite difference grid on the the interval
$[-\hat{\eta}, \hat{\eta}]$ with $m_x+2$ points,
$$
x_i = -\hat{\eta} + hi = -\hat\eta + \tfrac{2\hat\eta}{m_x+1}i,
\qquad i=0, \dots, m_x+1.
$$
Let $w_i^\ell$ be a grid function approximating $w(x_i, t^\ell)$. The
approximation is advanced with time step $k=\tau/m_t$ (for a total of
$m_t$ time-steps) according to the difference equation for
$i\in\{1,\dots,m_x\}$ and $\ell\in\{2,\dots,m_t\}$,
\begin{subequations}
\begin{gather*}
  \frac{w_i^{\ell+1} - 2 w_i^\ell + w_i^{\ell-1}}{k^2} =
  \frac{a_{i+\frac{1}{2}}(w_{i+1}^\ell - w_i^\ell) -
  a_{i-\frac{1}{2}}(w_i^\ell - w_{i-1}^\ell)}{h^2}, \\
  w_0^\ell = w_{m_x+1}^\ell = 0, \\
  w^0_i = \bar{w}_{j+\frac{1}{2}} + P_{j+\frac{1}{2}} x_i,
  \quad
  w^1_i=w_i^0 + \tfrac{1}{2}k^2 \frac{a_{i+\frac{1}{2}}(w_{i+1}^\ell
  - w_i^\ell) - a_{i-\frac{1}{2}}(w_i^\ell - w_{i-1}^\ell)}{h^2},
\end{gather*}
\end{subequations}
Here $a_{i+\frac{1}{2}} = a_\varepsilon(X_{j+\frac{1}{2}} + x_{i+\frac{1}{2}}).$

It is a standard result of classical homogenization theory
\cite{Lions-Periodic-Structures, Allaire-Two-Scale-Convergence,
Marchenko2005-ov} that the average value of the flux $a_\varepsilon
\grad w$ over a period $\varepsilon$ converges to the homogenized
flux $\bar{a} \grad\bar{w}$ in the periodic setting for elliptic
problems. Motivated by these results, the flux $F_{j+\frac{1}{2}}$ is
therefore approximated by the weighted average in both space and time
of $a_\varepsilon \grad w$,
\begin{equation} \label{eq:flux-averaging}
F_{j+\frac{1}{2}} = \frac{1}{\eta\tau}
\int_{-\tau}^{\tau}\int_{-\eta}^{\eta}
\mathcal{K}(x/\eta)\mathcal{K}(t/\tau)
a_\varepsilon(X_{j+\frac{1}{2}}+x) \frac{\partial w}{\partial x} \, dx\, dt.
\end{equation}
Here, $\mathcal{K}\in \mathbb{K}^{p,q}$ is a local averaging kernel.
A function $\mathcal{K}$ is said to belong to $\mathbb{K}^{p,q}$ if
it has $q$ continuous and compactly supported derivatives in
$[-1,1]$, and has $p$ vanishing moments. The integral in
\eqref{eq:flux-averaging} is approximated by the trapezoidal rule on
the grid $(x_i,t^n)$ which is high order accurate when $q$ is large.
We note finally, that by linearity, we can write
$F_{j+\frac{1}{2}}(t)=\alpha_{j+\frac{1}{2}}P_{j+\frac{1}{2}}(t)$,
where $\alpha_{j+\frac{1}{2}}$ does not depend on the macroscale grid
function $\bar{w}_j$, nor on $t$, and $P_{j,+\frac{1}{2}}$. Moreover,
in \cite{wave-hmm} it was shown that if $w(x,t)$ is the exact
solution to \eqref{eq:fine-scale-wave} when $a_\varepsilon$ is
$\varepsilon$-periodic, then,
\begin{equation} \label{eq:generic_flux_error}
\bigl|
\alpha_{j+\frac{1}{2}}- \bar{a}
\bigr|\leq C_{a,q} \left[(\varepsilon/\eta)^{q}
  +(\varepsilon/\tau)^{q}
\right],
\end{equation}
for a constant $C_{a,q}$ that depends on $a$ and $q$ via ${\mathcal
K}$, but not on $\varepsilon$, $\eta$ and $\tau$. The local averaging
kernel improves the approximation of the homogenized flux by
projecting away variations in the solution and ultimately achieves
high polynomial order $q$ with respect to the size of the domain $\eta$
relative to the period $\varepsilon$. For details on the use
of kernels in HMM we refer to \cite{wave-hmm}. In Appendix
\ref{appendix:kernel}, we provide a new procedure to generate and
evaluate kernels of this type using orthogonal polynomials. This
procedure is efficient and numerically stable even for very high
order kernels and therefore preferable to the procedure provided in
\cite{wave-hmm}.

We can now simplify the expressions for the macro-scale equation.
Given the definition of $P_{j+\frac{1}{2}}(t)$ in \eqref{eq:Pjdef}
this implies that
\begin{equation*}
\frac{F_{j+\frac{1}{2}}(t) - F_{j-\frac{1}{2}}(t)}{H}
= \frac{\alpha_{j+\frac{1}{2}}\bar{w}_{j+1}(t)
  -(\alpha_{j+\frac{1}{2}}+\alpha_{j-\frac{1}{2}})\bar{w}_{j}(t)+
\alpha_{j-\frac{1}{2}}\bar{w}_{j-1}(t)}{H^2},
\end{equation*}
If we let
\begin{align*}
\vec{\bar{w}} &= (\bar{w}_1,\ldots,\bar{w}_{M_x})^T, &
\vec{f}(t) &= (f(t,X_1),\ldots, f(t,X_{M_x}))^T, \\
\vec{v}_j &= (v_j(X_1), \ldots, v_j(X_{M_x}))^T, &
\bar{A} &= \diag{\alpha_{1/2},\ldots,\alpha_{M_x+1/2}},
\end{align*}
we can write the macro-scale solver \eqref{eq:wave-semi-discrete} on
matrix form as
\begin{gather}
\frac{d^2 \vec{\bar{w}}}{d t^2} =
D_{-} \bar{A} D_+\vec{\bar{w}} + \vec{f}, \qquad t > 0,
\label{eq:semi-discrete-wave} \\
\vec{\bar{w}}(0) = \vec{v}_0, \qquad \vec{\bar{w}}_t(0) = \vec{v}_1, \notag
\end{gather}
where $D_{-}$ and $D_+$ are the standard backward and forward
difference operators assuming homogeneous Dirichlet boundary conditions on matrix form.
For other discretizations and in
higher dimensions the matrix $D_{-} \bar{A} D_+$ will be replaced by
a more general matrix approximating the homogenized
operator. Moreover, if we select $D_{-} = -D_{+}^T$ in higher
dimensions, e.g., a staggered grid discretization, then $D_{-}
\bar{A} D_+$ will be symmetric because $\bar{A}$ is always symmetric
when $a_\varepsilon$ is a scalar or symmetric tensor. In the
one-dimensional setting with periodic $a_\varepsilon$ we can pick $H$ so that all
$\alpha_{j+1/2}$ are the same, see \cite{wave-hmm}, and the equations
can be further simplified. We summarize this case in the following
proposition from \cite{wave-hmm}.
\begin{prop}\label{prop:hmmprop}
Consider the one-dimensional case and suppose
$a_\varepsilon(x) = a(x/\varepsilon)$
where $a$ is $1$-periodic. If $H$ is an integer multiple of
$\varepsilon$, the semi-discrete HMM method for \eqref{eq:wave} can be written,
\begin{gather*}
  \frac{d^2 \vec{\bar{w}}}{d t^2} =
  \alpha D^2\vec{\bar{w}} + \vec{f}, \qquad t > 0, \\
  \vec{\bar{w}}(0) = \vec{v}_0, \qquad \vec{\bar{w}}_t(0) = \vec{v}_1,
\end{gather*}
where $\alpha$ is a scalar constant, and $D^2$ is the standard
3-point finite difference Laplacian. Moreover, when the micro-problem
is solved exactly and an averaging kernel in
$\mathbb{K}^{p,q}$ is used,
\[
  |\alpha - \bar{a}| \leq C_{a,q} \left[ (\varepsilon/\eta)^q +
  (\varepsilon / \tau)^q \right],
\]
for a constant $C_{a,q}$ that is independent of $\varepsilon$,
$\eta$ and $\tau$,
but depends on $a$ and $q$.
\end{prop}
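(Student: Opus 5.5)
The plan is to build directly on the linearity observation made just before the statement, namely $F_{j+\frac12}(t)=\alpha_{j+\frac12}P_{j+\frac12}(t)$. First I will make the dependence of $\alpha_{j+\frac12}$ explicit. Let $W$ solve the micro-problem \eqref{eq:fine-scale-wave} with initial data $w(x,0)=x$, $w_t(x,0)=0$. Then
\[
\alpha_{j+\frac12}=\frac{1}{\eta\tau}\int_{-\tau}^{\tau}\int_{-\eta}^{\eta}\mathcal{K}(x/\eta)\mathcal{K}(t/\tau)\,a_\varepsilon(X_{j+\frac12}+x)\,W_x\,dx\,dt .
\]
The constant part $\bar w_{j+\frac12}$ of the initial data contributes nothing. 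Indeed, a constant is a stationary solution of the unforced wave equation with zero initial velocity, so its flux vanishes identically. This holds for the exact micro-solution, and also on the micro-grid if the micro-domain is taken as the whole domain of dependence, or if the Dirichlet values are set consistently. Therefore $\alpha_{j+\frac12}$ depends only on the coefficient profile $x\mapsto a_\varepsilon(X_{j+\frac12}+x)$ on $[-\hat\eta,\hat\eta]$ and on $\eta,\tau,\mathcal{K}$.

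Second, I will show that all the $\alpha_{j+\frac12}$ coincide. Here $X_{j+\frac12}=(j+\frac12)H$. If $H=N\varepsilon$, then $X_{j+\frac12}-X_{\frac12}=jN\varepsilon$ is an integer multiple of $\varepsilon$. By $1$-periodicity of $a$ this gives $a_\varepsilon(X_{j+\frac12}+x)=a_\varepsilon(X_{\frac12}+x)$ for all $x$. The micro-problems therefore coincide, and so $\alpha_{j+\frac12}=\alpha:=\alpha_{\frac12}$ for every $j$.

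One subtlety needs care. Translation in $x$ of the coefficient does not change $\alpha$ only once the offset is an integer multiple of $\varepsilon$; the half-cell offset $X_{\frac12}$ itself is harmless because it is common to all $j$. If one wants $\alpha$ independent of the offset as well, that is not needed for the statement.

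With $\alpha$ constant, $\bar A=\alpha I$, and $D_-\bar A D_+=\alpha D_-D_+=\alpha D^2$, the standard 3-point Laplacian with Dirichlet rows. This yields the claimed semi-discrete system. The initial data are as in \eqref{eq:semi-discrete-wave}.

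Finally, the error bound is exactly \eqref{eq:generic_flux_error} applied to the single constant $\alpha$, under the hypotheses that the micro-problem is solved exactly and that $\mathcal{K}\in\mathbb{K}^{p,q}$. This is quoted from \cite{wave-hmm}. If I had to reprove it, I would expand $W$ via the cell solution, $W=x+\varepsilon\chi(x/\varepsilon)+$ a time-oscillatory remainder. I would then observe that $a(1+\chi')$ equals $\bar a$ plus a mean-zero periodic function. Kernel averaging of mean-zero $\varepsilon$-periodic functions gives $O((\varepsilon/\eta)^q)$ by repeated integration by parts using the $q$ derivatives of $\mathcal{K}$, and likewise $O((\varepsilon/\tau)^q)$ for the oscillations in $t$. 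That step, controlling the temporal oscillations of the micro-solution, is the main obstacle, and here I would simply invoke \cite{wave-hmm}.
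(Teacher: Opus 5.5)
Your proposal is correct and follows essentially the same route as the paper, which gives no separate proof but takes the proposition from \cite{wave-hmm}. The paper relies on the linearity relation $F_{j+\frac12}=\alpha_{j+\frac12}P_{j+\frac12}$, on all micro-problems coinciding when $H$ is an integer multiple of $\varepsilon$ (so that $\bar A=\alpha I$ and $D_-\bar A D_+=\alpha D^2$), and on the quoted flux estimate \eqref{eq:generic_flux_error}. You make these steps explicit and, like the paper, defer the hard part (controlling the temporal oscillations in the kernel-averaged flux) to \cite{wave-hmm}.
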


\subsection{The WaveHoltz Method} \label{sec:waveholtz-continuous}

WaveHoltz is an iterative methods for solving the Helmholtz equation
\begin{gather} \label{eq:helmholtz2}
\nabla \cdot (a(x) \nabla u) + \omega^2 u = b(x), \qquad x\in\Omega.
\end{gather}
Let $v^{(\ell)}$ denote iterate $\ell$ and $\Pi$ the WaveHoltz
iteration operator, so that
\[
{v}^{(\ell+1)} = \Pi {v}^{(\ell)}.
\]
In each iteration, let $w$ be the solution to an associated
time-dependent wave equation with $v^{\ell}$ as initial data and
$-b(x)\cos(\omega t)$ as forcing function,
\begin{gather}
w_{tt} = \nabla\cdot(a(x)\nabla w) - b(x)\cos(\omega t), \label{eq:WH-wave-eq} \\
w(x,0) = v^{(\ell)}(x), \quad w_t(x,0) = 0. \notag
\end{gather}
Then the next iterate is given by filtering $w$ in time over one
period $T = \frac{2\pi}{\omega}$,
\[
{v}^{(\ell+1)} =\Pi {v}^{(\ell)} = \frac{2}{T}\int_0^T
\left(\cos(\omega t) - \frac{1}{4}\right) {w}(x,t)\, dt.
\]
The convergence in $L^2$ and $H^1$ of the sequence ${v}^{(\ell)}$ to ${u}$, the solution
to \eqref{eq:helmholtz2}, has been established in
\cite{WaveHoltz,rotem2024} under the condition that $\omega$ is not resonant, i.e.\mbox{} not
an eigenvalue of $-\nabla \cdot (a(x) \nabla u)$.

\subsection{The WaveHoltz Heterogeneous Multiscale Method}
\label{sec:discrete-method}
The WaveHoltz Heterogeneous Multiscale Method presented here is a
finite difference method built upon the existing finite difference
HMM for the wave equation \cite{wave-hmm}, as described in
Section~\ref{sec:wave-hmm-continuous}. The key idea is to use the
coarse scale wave equation
\eqref{eq:eff-wave-eq} as the associated wave equation \eqref{eq:WH-wave-eq} in
WaveHoltz. The iterations will then converge to an approximation of
the homogenized Helmholtz equation \eqref{eq:hom-helmholtz}.

We consider the method in one dimension. On the macro-scale we use
the same coarse grid as in Section \ref{sec:macro}. Starting with any
initial guess for the solution $\vec{v}^{(0)} \in \R^{M_x}$, the
WaveHoltz Heterogeneous Multiscale Method produces a sequence of
approximate solutions $\vec{v}^{(\ell)}$ via the fixed point iteration:
\begin{equation} \label{eq:WHMM-fpi}
\vec{v}^{(\ell+1)} = \Pi_h \vec{v}^{(\ell)}.
\end{equation}
Here, $\Pi_h$ is the discretization of the operator $\Pi$ described
in Section \ref{sec:waveholtz-continuous}. In each WaveHoltz
iteration, we solve \eqref{eq:semi-discrete-wave}, which discretizes
\eqref{eq:eff-wave-eq}, over one period in time with $\vec{v}_0 =
\vec{v}^{(\ell)}$, $\vec{v}_1 = 0$, and $\vec{f} = -\vec{b}\cos(\omega t)$. Let $\vec{\bar{w}}$ and
$\bar{A}$ be defined as in Section~\ref{sec:micro-solver}. Hence at
iteration $\ell$, we consider
\begin{gather*}
\frac{d^2 \vec{\bar{w}}}{d t^2} = D_{-} \bar{A} D_{+} \vec{\bar{w}} -
\vec{b}\cos(\omega t), \qquad 0 \le t \le T, \\
\vec{\bar{w}}(0) = \vec{v}^{(\ell)}, \qquad \vec{\bar{w}}_t(0) = 0.
\end{gather*}
Here
$\vec{b}=( b(X_1),\ldots,b(X_{M_x}))^T$
where $b$ is the right hand side of \eqref{eq:hom-helmholtz}.

We discretize in time with time-step $K = T / M_t$ (for a total of
$M_t$ time steps) via the modified time-stepping scheme \cite{rotemThesis, ElWaveHoltz}:
\begin{subequations} \label{eq:discrete-wave}
\begin{gather}
  \vec{\bar{w}}^{n+1} - 2 \vec{\bar{w}}^{n} + \vec{\bar{w}}^{n-1} =
  \theta^2 (D_{-} \bar{A} D_{+} \vec{\bar{w}}^{n} - \vec{b}\cos(n \omega
  K)), \qquad \theta = \frac{\sin(\omega K/2)}{\omega / 2}, \\
  \vec{\bar{w}}^{0} = \vec{v}^{(\ell)}, \qquad \vec{\bar{w}}^1 =
  \vec{\bar{w}}^0 + \tfrac{1}{2}\theta^2 (D_{-} \bar{A} D_{+}
  \vec{\bar{w}}^0 - \vec{b}),
\end{gather}
\end{subequations}
where $\vec{\bar{w}}^n\approx \vec{\bar{w}}(t^n)$ and $t^n=nK$. This
time-stepping scheme differs from the standard leap-frog scheme in
the introduction of $\theta \approx K$ which ensures that
$\vec{\bar{w}}^n = \vec{v}_h \cos (\omega t^n)$ when $\vec{\bar{w}}^0
= \vec{v}_h$ and
$\vec{v}_h$ is
the solution of the discrete Helmholtz equation,
\begin{equation}\label{eq:discHH}
D_{-} \bar{A} D_{+} \vec{v}_h + \omega^2 \vec{v}_h = \vec{b}.
\end{equation}
Finally, the discrete WaveHoltz operator $\Pi_h$ is computed as a left Riemann
sum in time of the solution $\vec{\bar{w}}^n$:
\begin{equation}\label{eq:discrete-pi}
\Pi_h \vec{v}^{(\ell)} = \frac{2}{T}\sum_{n=0}^{M_t-1} K\left(
\cos(\omega t^n) - \rho_0 \right) \vec{\bar{w}}^n
= \frac{2}{M_t} \sum_{n=0}^{M_t-1} \left( \cos\left(\frac{2 \pi
n}{M_t}\right) - \rho_0 \right) \vec{\bar{w}}^n.
\end{equation}
Note that in the continuous setting, $\rho_0 = \frac{1}{4}$. In the
discrete setting $\rho_0 = \frac{1}{4}-\frac{1}{4}\tan^2 \frac{\omega
K}{2}$ is modified to guarantee that the iteration is contractive.
This implies, that $\vec{v}^{(\ell)}$ converges to the unique fixed
point of the discrete WaveHoltz iteration $\vec{v}_h = \Pi_h
\vec{v}_h$. We summarize the main points in the proposition below,
which follows from the results in \cite{WaveHoltz,rotem2024, rotemThesis}.

\begin{prop}\label{prop:whiprop}
Let $-\lambda_j^2$ be the eigenvalues of the matrix $D_{-} \bar{A}
D_{+}$ and suppose
$$
\min_j |\lambda_j-\omega| >0.
$$
Then there exists a unique fixed point $\vec{v}_h$ to $\Pi_h$ satisfying
\begin{equation}\label{eq:whifp}
  \vec{v}_h = \Pi_h \vec{v}_h
  \qquad\text{and}\qquad
  D_{-} \bar{A} D_{+} \vec{v}_h+\omega^2\vec{v}_h=\vec{b}.
\end{equation}
Moreover, for stable timesteps $K$ the WaveHoltz iterates
$\vec{v}^{(\ell)}$ converge to $\vec{v}_h$.
\end{prop}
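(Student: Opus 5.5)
We diagonalize the iteration in the eigenbasis of the symmetric matrix $L := D_{-}\bar{A}D_{+}$. In one dimension $D_{-} = -D_{+}^T$, so $L$ is symmetric; when all $\alpha_{j+1/2}>0$ it is negative definite. Hence $L = Q\,\diag(-\lambda_1^2,\dots,-\lambda_{M_x}^2)\,Q^T$ with $Q$ orthogonal. The scheme \eqref{eq:discrete-wave} is linear, so we split $\vec{\bar{w}}^n = \vec{p}^n + \vec{q}^n$. Here $\vec{p}^n$ solves the homogeneous scheme with data $\vec{v}^{(\ell)} - \vec{v}_h$, and $\vec{q}^n$ solves the forced scheme with data $\vec{v}_h$.

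First, let $\vec{v}_h$ be the solution of \eqref{eq:discHH}. It exists and is unique because $\omega^2 \neq \lambda_j^2$, which is exactly the assumption $\min_j|\lambda_j - \omega|>0$. Next we verify by induction that $\vec{q}^n = \vec{v}_h\cos(\omega t^n)$. Using $L\vec{v}_h - \vec{b} = -\omega^2\vec{v}_h$ and the identity $\cos(\omega t^{n+1}) - 2\cos(\omega t^n) + \cos(\omega t^{n-1}) = -4\sin^2(\omega K/2)\cos(\omega t^n) = -\theta^2\omega^2\cos(\omega t^n)$, the recursion is satisfied. The starting step also matches, since $1 - \tfrac12\theta^2\omega^2 = \cos(\omega K)$. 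This is where the choice of $\theta$ enters.

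It follows that $\Pi_h\vec{v}_h = \vec{v}_h\cdot\frac{2}{M_t}\sum_n(\cos(2\pi n/M_t) - \rho_0)\cos(2\pi n/M_t)$. The discrete sum $\frac{2}{M_t}\sum\cos^2 = 1$ and $\sum\cos = 0$ (for $M_t\ge 3$), so $\Pi_h\vec{v}_h = \vec{v}_h$. Uniqueness of the fixed point will follow from the contraction established next.

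For the error $\vec{e}^{(\ell)} = \vec{v}^{(\ell)} - \vec{v}_h$ we have $\vec{e}^{(\ell+1)} = S\,\vec{e}^{(\ell)}$, where $S$ is the homogeneous part of $\Pi_h$. In each eigenmode, the homogeneous leapfrog recursion with modified step gives $p^n = \cos(\mu_j n)\,p^0$, where $\cos\mu_j = 1 - \tfrac12\theta^2\lambda_j^2$. This is real for stable $K$, meaning $\theta^2\lambda_j^2 \le 4$. Therefore $S = Q\,\diag(\beta(\lambda_j))\,Q^T$ with the discrete filter
\[
\beta(\lambda) = \frac{2}{M_t}\sum_{n=0}^{M_t-1}\Bigl(\cos\tfrac{2\pi n}{M_t} - \rho_0\Bigr)\cos(\mu n).
\]
Convergence for every initial guess holds if and only if $\max_j|\beta(\lambda_j)|<1$. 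Since $S$ is symmetric, this also shows the iteration is equivalent to a symmetric positive definite system $(I-S)\vec{v} = \cdots$.

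The main obstacle is bounding $|\beta|<1$ uniformly, including at $\lambda = 0$ and at the stability limit, and showing $\beta(\lambda) = 1$ only when $\mu = \omega K$, i.e.\ $\lambda = \omega$. We would sum the geometric series in closed form. At $\mu = 0$ this gives $\beta = -2\rho_0$, so we need $|\rho_0|<1/2$, which holds for the modified $\rho_0 = \tfrac14 - \tfrac14\tan^2(\omega K/2)$ when $K$ is small. We would then compare with the continuous filter $\beta_{\rm cont}(\lambda)$ of \cite{WaveHoltz}. The specific correction $\tfrac14\tan^2(\omega K/2)$ in $\rho_0$ is what keeps $\beta\ge -1$ and prevents $\beta = 1$ away from the resonance. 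Here we would invoke the explicit discrete filter analysis in \cite{rotem2024,rotemThesis} rather than redo it.

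With $\max_j|\beta(\lambda_j)|<1$ guaranteed by $\lambda_j\neq\omega$, $S$ is a contraction in the Euclidean norm. Hence $\vec{v}^{(\ell)}\to\vec{v}_h$ geometrically, and $\vec{v}_h$ is the unique fixed point.
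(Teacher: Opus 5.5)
Your proposal is correct and takes essentially the same route as the paper, which gives no independent proof but obtains the proposition from the modal filter analysis of \cite{WaveHoltz,rotem2024,rotemThesis}: you check $\vec{\bar w}^n=\vec v_h\cos(\omega t^n)$ and $\Pi_h\vec v_h=\vec v_h$ directly, and you defer the bound $|\beta(\lambda_j)|<1$ to those same references. To sharpen your heuristic aside, the $\tan^2(\omega K/2)$ term in $\rho_0$ is precisely what makes $d\beta/d\mu=0$ at $\mu=\omega K$ (with $\rho_0=\tfrac14$ one finds $d\beta/d\mu=-\tfrac14\tan(\omega K/2)$ there, so $\beta>1$ for eigenvalues just below $\omega$), i.e.\ it keeps $\beta\le 1$ in a neighbourhood of the resonance.
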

\begin{remark}
Proposition \ref{prop:whiprop} is stated in terms of the specific one
dimensional discretization matrix $D_{-} \bar{A} D_{+}$, but the
result holds for all stable discretizations and in any dimension.
\end{remark}

\subsection{Approximating the Homogenized Coefficient
Offline}\label{sec:offline-coefficient}
To reduce the computational cost of the method we can approximate the
homogenized coefficient in an offline stage before starting the
WaveHoltz iteration. This coefficient can be reused for several
frequencies $\omega$ and several right hand sides $b$. Recall that HMM estimates
\[ F_{j + \frac{1}{2}}^n = \alpha_{j+\frac{1}{2}} \tfrac{\bar{\vec{w}}^n_{j+1} -
\bar{\vec{w}}^n_{j}}{H} \approx \bar{a} \bar{w}_x({X_{j+\frac{1}{2}}}). \]
By substituting the particular function $\bar{w}(x) = x$, we get that
\[ F_{j + \frac{1}{2}}^n = \alpha_{j+\frac{1}{2}} \approx
\bar{a}(x_{j+\frac{1}{2}}). \]
Moreover, since the homogenized coefficient does not depend on time,
we can compute the homogenized coefficient offline which eliminates
the need to apply HMM at every time-step of the WaveHoltz iteration.
From homogenization theory, if $a_\varepsilon(x) = a(x/\varepsilon)$
is periodic, then $\bar{a}$ is constant. Hence, in this case, we need
only compute a single value of $\bar{a}$.

\section{Analysis in One Dimension} \label{sec:1d-analysis}
In this section, we analyze WHMM applied to the one dimensional
problem with periodic coefficients. For this problem, we can describe
the effective equation estimated by WHMM based on the analysis in
\cite{wave-hmm} and provide precise error estimates.

Consider the Helmholtz equation in one dimension:
\begin{subequations}
  \label{eq:helmholtz-1d}
\begin{gather}
  \left(a_\varepsilon(x) u_x \right)_x + \omega^2 u = b(x), \qquad
  x\in (0,L), \\
  u(0) = u(L) = 0.
\end{gather}
\end{subequations}
Assume that $a_{\varepsilon}(x) = a(x/\varepsilon)$ is
$\varepsilon$-periodic. Then the homogenized coefficient $\bar{a}$ is
constant and the homogenized solution $\bar{u}$ satisfies
\begin{subequations}
  \label{eq:hom-1D-helmholtz}
\begin{gather}
  \bar{a}\bar{u}_{xx} + \omega^2 \bar{u} = b(x), \qquad x\in (0,L), \\
  \bar{u}(0) = \bar{u}(L) = 0.
\end{gather}
\end{subequations}
Applying the same finite difference discretization as in Proposition
\ref{prop:hmmprop}  to \eqref{eq:hom-1D-helmholtz}, we get
\begin{equation} \label{eq:discrete-helmholtz-1d}
\bar{a} D_-D_+ \vec{\bar{u}}_h + \omega^2 \vec{\bar{u}}_h = \vec{b}.
\end{equation}
We first aim to compare $\vec{\bar{u}}_h$, the finite difference
approximation, to $\vec{v}_h$, the fixed point of the WHMM iteration
\eqref{eq:WHMM-fpi}. Following the conclusions in Proposition
\ref{prop:hmmprop} and \ref{prop:whiprop}, we find an effective
equation for the discrete error $\vec{v}_h - \vec{\bar{u}}_h$, and,
by stability estimates provided in Lemma \ref{lem:stability}, we
arrive at the error estimates in Theorem \ref{thm:discrete-error}. In
Theorem \ref{thm:main_theorem} we then also derive an estimate for
the error $\vec{v}_h-\bar{u}$ comparing the WHMM  solution to the
solution to the continuous homogenized problem \eqref{eq:hom-1D-helmholtz}.

Although a unique solution
$\bar{u}$ to \eqref{eq:hom-1D-helmholtz} is
guaranteed to exist in
$H^2(0,L)\cap H^1_0(0,L)$
when $b\in L^2(0,L)$ and $\omega$ is away from resonances, we will
for simplicity use the stronger assumption that
$b \in C^2_b(0,L)$, the space of two times continuously differentiable
functions with bounded derivatives. This assumption implies that
$\bar{u}\in C_b^4(0,L)$. To obtain relative error estimates we will
require additionally that
$b \in C^2_b(0,L)\cap C_0([0,L])$, where $C_0([0,L])$
are the continuous functions that vanish at $x=0$ and $x=L$.

The error estimates are provided in terms of a discrete $L^2$-norm
and a continuous $H^2$-norm which we now define. For a discretization
with $M_x+2$ points as described in Section
\ref{sec:discrete-method}, we define the discrete $L^2$-norm of a
grid function $\vec{f} : f_j = f(X_j)$ as
\[ \|\vec{f}\|_H = \sqrt{\sum_{j=1}^{M_x} H f_j^2}
= H^{\frac{1}{2}}\|\vec{f}\|_2 \approx \|f\|_{L^2}. \]
Throughout, $\|\cdot\|_{L^2} = \|\cdot\|_{L^2(0,L)}$ unless the
domain is explicitly stated. Similar notation is used for the other
Sobolev norms.

In Theorem \ref{thm:main_theorem}, we present the error in terms of an
$H^2$-norm where the second the derivative is scaled with the physical frequency
$\kappa = \omega/\sqrt{\bar{a}}$,
\begin{equation*}
\|f\|_{H^2_\kappa}^2 = \|f\|_{L^2}^2+ \kappa^{-4} \|f_{xx}\|_{L^2}^2
\lesssim \|f\|_{H^2}^2.
\end{equation*}
For fixed $\omega > 0$ and $\bar{a}$, this norm is equivalent to the
standard $H^2$-norm.

Standard interpolation estimates (see Proposition 1.5 in
\cite{Ern2004}) give that
\[
{\|I_H\vec{f}-f\|_{L^2}} \leq (H/\pi)^2\|f_{xx}\|_{L^2},
\]
when $I_H\vec{f}$ is the continuous piecewise linear interpolant of $\vec{f}$.
Together with the bound $\|\vec{f}\|_{H}\leq
\sqrt{3}\|I_H\vec{f}\|_{L^2}$ this leads to the following two
inequalities that we will use in the proofs below,
\begin{subequations}\label{eq:interpineq}
\begin{align}
\|\vec{f}\|_{H}
    &\leq \sqrt{3}\|f\|_{L^2}+\frac{\sqrt{3}H^2}{\pi^2}\|f_{xx}\|_{L^2}, \label{eq:interp-first} \\
\|f\|_{L^2}
    &\leq \|\vec{f}\|_{H}+\frac{H^2}{\pi^2}\|f_{xx}\|_{L^2}. \label{eq:interp-second}
\end{align}
\end{subequations}
Inequality \eqref{eq:interp-first} holds for \(f\in C_b^2(0,L)\), and \eqref{eq:interp-second} holds for
\(f\in C_b^2(0,L)\cap C_0([0,L])\).

Theorems \ref{thm:discrete-error} and \ref{thm:main_theorem} rely on
a few assumptions. As before, let $\{-\lambda_j^2\}_{j=1}^{M_x}$ be
the eigenvalues of $D^2$. Moreover, define the eigenvalue gaps
\begin{gather*}
\delta_a := \min_{j=1,\ldots,M_x} \left| \omega - \sqrt{\bar{a}}
\lambda_j\right|,
\quad
\delta_\alpha := \min_{j=1,\ldots,M_x} \left| \omega -
\sqrt{\alpha} \lambda_j\right|,
\quad
\delta_0 := \min_{k\in \N} \left| \omega - \sqrt{\bar{a}}
\tfrac{\pi k}{L} \right|,
\\
\delta := \min\{\delta_a, \delta_\alpha,\delta_0\}.
\end{gather*}
We then assume
\begin{enumerate}[label=(\textbf{A\arabic*})]
\item\label{assumption:H=n*eps} $H$ is an integer multiple of
  $\varepsilon$ and $\eta = \tau$.

\item\label{assumption:delta} $0<\delta  \leq \omega$, and $\omega \geq 1$.

\item\label{assumption:resolution} Sufficient grid resolution, $H
  \omega \leq \sqrt{\bar{a}}$.
\item\label{assumption:skokram} Sufficient upscaling approximation,
  $\frac{|\bar{a}-\alpha|}{\bar{a}}\leq \frac12.$

\end{enumerate}
Note, due to Proposition~\ref{prop:whiprop} the last assumption
\ref{assumption:skokram}
is essentially a condition on the size of the HMM micro box, more
precisely that $(\varepsilon/\eta)^q$ is sufficiently small.
\begin{theorem}[Discrete Approximation] \label{thm:discrete-error}
Let $\vec{\bar{u}}_h$
be the solution to 
\eqref{eq:discrete-helmholtz-1d}.
Let $\vec{v}_h$ be the fixed-point of WHMM
in \eqref{eq:whifp}
for the same $\vec{b}$
as in \eqref{eq:discrete-helmholtz-1d}. Assume \ref{assumption:H=n*eps},
\ref{assumption:delta}
and \ref{assumption:skokram}. Then the following absolute error estimates hold,
\begin{equation} \label{eq:discrete-absolute-bound}
  \|\vec{\bar{u}}_h - \vec{v}_h\|_H \leq  \frac{C_1}{\delta^2}
  \left(\frac{\varepsilon}{\eta}\right)^q \|\vec{b}\|_H,\qquad
  \|\vec{\bar{u}}_h - \vec{v}_h\|_H \leq \frac{C_2}{\omega^2
  \delta^2} \left(\frac{\varepsilon}{\eta}\right)^q \|D^2 \vec{b}\|_H.
\end{equation}
Similarly, this relative error estimate holds
\begin{equation} \label{eq:discrete-relative-bound}
  \frac{\|\vec{\bar{u}}_h - \vec{v}_h\|_H}
  {\|\vec{\bar{u}}_h\|_H}
  \leq C_3 \frac{\omega}{\delta} \Bigl(\frac{\varepsilon}{\eta}\Bigr)^q.
\end{equation}
The constants $C_1$, $C_2$ and $C_3$ only depend on $a$ and $q$.
\end{theorem}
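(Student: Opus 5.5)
Both vectors solve diagonalizable linear systems built from the same matrix $D^2=D_-D_+$. By Proposition \ref{prop:hmmprop} (using \ref{assumption:H=n*eps}), the WHMM matrix is $D_-\bar A D_+=\alpha D^2$. By Proposition \ref{prop:whiprop}, which applies because $\delta_\alpha>0$ by \ref{assumption:delta}, the fixed point therefore satisfies $\alpha D^2\vec v_h+\omega^2\vec v_h=\vec b$. Subtracting \eqref{eq:discrete-helmholtz-1d} and writing $\vec e=\vec{\bar u}_h-\vec v_h$ gives
\[
(\alpha D^2+\omega^2)\vec e=(\alpha-\bar a)D^2\vec{\bar u}_h .
\]
Hence $\vec e=(\alpha-\bar a)(\alpha D^2+\omega^2)^{-1}D^2(\bar a D^2+\omega^2)^{-1}\vec b$. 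Since $D^2$ is symmetric with eigenvalues $-\lambda_j^2$ and orthonormal sine eigenvectors, and $\|\cdot\|_H$ is a scaled Euclidean norm, everything reduces to the scalar symbol
\[
g_j=\frac{\lambda_j^2}{(\omega^2-\alpha\lambda_j^2)(\omega^2-\bar a\lambda_j^2)} .
\]
The first bound then follows from $\|\vec e\|_H\le|\alpha-\bar a|\max_j|g_j|\,\|\vec b\|_H$ together with Proposition \ref{prop:hmmprop}, which gives $|\alpha-\bar a|\le 2C_{a,q}(\varepsilon/\eta)^q$ since $\eta=\tau$.

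The key step, and the main obstacle, is the uniform bound $\max_j|g_j|\lesssim \delta^{-2}$, with constants depending only on $a$. I would factor $\omega^2-\bar a\lambda^2=(\omega-\sqrt{\bar a}\lambda)(\omega+\sqrt{\bar a}\lambda)$, so that $|\omega^2-\bar a\lambda_j^2|\ge\delta_a(\omega+\sqrt{\bar a}\lambda_j)$, and similarly $|\omega^2-\alpha\lambda_j^2|\ge\delta_\alpha(\omega+\sqrt{\alpha}\lambda_j)$. Then
\[
|g_j|\le\frac{\lambda_j^2}{\delta^2(\omega+\sqrt{\bar a}\lambda_j)(\omega+\sqrt\alpha\lambda_j)}\le\frac{1}{\delta^2\sqrt{\bar a\alpha}} .
\]
Assumption \ref{assumption:skokram} gives $\alpha\ge\bar a/2$, so this is at most $\sqrt2/(\bar a\delta^2)$, a constant depending only on $a$. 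For the second absolute bound I would instead move $D^2$ onto $\vec b$ by commutation, which leaves the symbol $1/[(\omega^2-\alpha\lambda^2)(\omega^2-\bar a\lambda^2)]$ to bound by $C/(\omega^2\delta^2)$. For small $\lambda$ (say $\sqrt{\bar a}\lambda\le\omega/2$) both factors are $\gtrsim\omega^2$, which gives the bound $C/\omega^4\le C/(\omega^2\delta^2)$ using $\delta\le\omega$. For large $\lambda$, one factor is $\ge\delta\,\omega$ (from the $(\omega+\sqrt{\bar a}\lambda)$ term), and the other is $\ge\delta\,\omega$ as well. The delicate part is making sure the two near-resonant regimes for $\alpha$ and $\bar a$ each cost only one factor of $\delta$.

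For the relative bound \eqref{eq:discrete-relative-bound}, I would bound $\|\vec e\|_H$ in terms of $\|\vec{\bar u}_h\|_H$ directly. Writing $\vec e=(\alpha-\bar a)(\alpha D^2+\omega^2)^{-1}D^2\vec{\bar u}_h$, the needed symbol is
\[
\frac{\lambda^2}{|\omega^2-\alpha\lambda^2|}\le\frac{\lambda^2}{\delta(\omega+\sqrt\alpha\lambda)}\le\frac{\lambda}{\delta\sqrt\alpha} .
\]
This grows with $\lambda$ and is not bounded by $\omega/\delta$ alone, so I would try to split $|\alpha-\bar a|\lambda^2$ relative to the $\bar a$ symbol, comparing $\lambda^2$ with $|\omega^2-\alpha\lambda^2|$. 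For $\sqrt\alpha\lambda\ge2\omega$ the ratio is bounded by a constant. For $\sqrt\alpha\lambda\le2\omega$ it is bounded by $\lambda/(\delta\sqrt\alpha)\le 2\omega/(\alpha\delta)$. This gives $\|\vec e\|_H\le C(\omega/\delta)(\varepsilon/\eta)^q\|\vec{\bar u}_h\|_H$, using $\omega\ge1$ and $\delta\le\omega$ to absorb the constant case, with $C_3$ depending only on $a$ and $q$.
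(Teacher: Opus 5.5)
Your proposal is correct and is essentially the paper's argument: the same representation $\vec e=(\alpha-\bar a)A^{-1}B^{-1}D^2\vec b$ with $A=\bar a D^2+\omega^2 I$, $B=\alpha D^2+\omega^2 I$, diagonalized in the sine basis and controlled by $|\omega^2-c\lambda_j^2|\ge\delta\,(\omega+\sqrt{c}\,\lambda_j)$. The differences are only tactical:
\begin{itemize}
\item For the relative bound, the paper uses the identity $\alpha B^{-1}D^2=I-\omega^2B^{-1}$ instead of your case split.
\item For the first absolute bound, the paper combines the relative bound with $\|\vec{\bar u}_h\|_H\le\|\vec b\|_H/(\omega\delta)$. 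Your direct bound $|g_j|\le 1/(\delta^2\sqrt{\alpha\bar a})$ is slightly cleaner and does not need $\delta\le\omega$.
\item For the second absolute bound, the paper just uses $\|A^{-1}\|_H,\|B^{-1}\|_H\le 1/(\omega\delta)$. This holds for every $j$, so your small/large-$\lambda$ split and the ``delicate'' resonance bookkeeping are unnecessary.
\end{itemize}
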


\begin{remark}
Since Theorem~\ref{thm:discrete-error} is a purely discrete
comparison, the only source of error is the approximation of the
homogenized coefficient $\bar a$ by
$\alpha$. This coefficient error can be made arbitrarily small
by choosing a sufficiently large relative micro-box size
$\eta/\varepsilon$. The eigenvalue gap $\delta$ measures the distance
to resonance and the error bounds deteriorate as $\delta\to
0$. In the absolute estimates
\eqref{eq:discrete-absolute-bound}, the
coefficient approximation error is amplified by the factor
$\delta^{-2}$. In the relative estimate 
\eqref{eq:discrete-relative-bound}, on the other hand, the
coefficient approximation error is amplified by the factor
$\omega/\delta$.
Since the proof of Theorem~\ref{thm:discrete-error} is largely
algebraic, extensions to higher dimensions should be possible,
although some care may be required in handling mixed derivative terms
such as $\partial^2/(\partial x\,\partial y)$, which necessarily
appear when $\bar a$ is anisotropic.
\end{remark}

\begin{proof}[Proof of Theorem \ref{thm:discrete-error}]
Set $A=\bar{a} D^2 + \omega^2 I$ and $B=\alpha D^2 + \omega^2 I$. Due
to \ref{assumption:delta} these matrices are both invertible since
$\delta_a,\delta_\alpha\geq \delta>0$ and by
\eqref{eq:discrete-helmholtz-1d} and \eqref{eq:whifp} in
Proposition~\ref{prop:whiprop}, we have $\vec{\bar{u}}_h
=A^{-1}\vec{b}$ and $\vec{{v}}_h =B^{-1}\vec{b}$. Moreover, $A$ and
$B$ commute, so that
\begin{align}
  \vec{\bar{u}}_h - \vec{v}_h
  &= \left[A^{-1} - B^{-1} \right]\vec{b}
  = A^{-1} B^{-1}(B - A) \vec{b} \notag \\
  &= (\alpha - \bar{a})A^{-1} B^{-1} D^2 \vec{b} \label{eq:err1} \\
  &= (\alpha - \bar{a})A^{-1} B^{-1}\frac{B-\omega^2I}{\alpha} \vec{b}
  = \frac{\alpha - \bar{a}}{\alpha}(I-\omega^2B^{-1}) A^{-1}\vec{b} \notag \\
  &= \frac{\alpha - \bar{a}}{\alpha}(I-\omega^2B^{-1})
  \vec{\bar{u}}_h. \label{eq:err2}
\end{align}
The error estimates now follow from \eqref{eq:err1} and
\eqref{eq:err2} by using Lemma~\ref{lem:stability}, which says that
both $\|A^{-1}\|_H$ and $\|B^{-1}\|_H$ are bounded by
$1/\omega\delta$, as well as Proposition~\ref{prop:hmmprop}, which
says that $|\bar{a}-\alpha|\leq C_{a,q}(\varepsilon/\eta)^q$ when
\ref{assumption:H=n*eps} holds. Thus, from \eqref{eq:err1},
\begin{equation*}
  \begin{aligned}
    ||\vec{\bar{u}}_h - \vec{v}_h||_H
    &\leq
    \frac{|\bar{a}-\alpha|}{\alpha}\|A^{-1}\|_H\|B^{-1}\|_H\|D^2\vec{b}\|_H \\
    &\leq \frac{2C_{a,q}}{\bar{a}}
    \frac{(\varepsilon/\eta)^q}{\omega^2\delta^2}\|D^2\vec{b}\|_H.
  \end{aligned}
\end{equation*}
where we also used the fact that
$\alpha\geq \bar{a}/2$, which follows from
\ref{assumption:skokram}. This shows the second estimate in
\eqref{eq:discrete-absolute-bound} with $C_2=2C_{a,q}/\bar{a}$. Next,
from \eqref{eq:err2}, and the fact that $\omega\geq\delta$ by
\ref{assumption:delta},
\begin{multline}
  ||\vec{\bar{u}}_h - \vec{v}_h||_H\leq
  \frac{|\alpha - \bar{a}|}{\alpha}(1+\omega^2\|B^{-1}\|_H)
  \|\vec{\bar{u}}_h\|_H \\
  \leq
  \frac{C_{a,q}}\alpha (\varepsilon/\eta)^q\left(1+\frac{\omega}{\delta}\right)
  \|\vec{\bar{u}}_h\|_H
  \leq
  \frac{4C_{a,q}}{\bar{a}} (\varepsilon/\eta)^q
  \frac{\omega}{\delta}\|\vec{\bar{u}}_h\|_H,
\end{multline}
which shows \eqref{eq:discrete-relative-bound} with
$C_3=4C_{a,q}/\bar{a}$. Finally, the first estimate in
\eqref{eq:discrete-absolute-bound} follows with $C_1=C_3$ since
$\|\vec{\bar{u}}_h\|_H =\|A^{-1}\vec{b}\|_H\leq
\|\vec{b}\|_H/\omega\delta$. This concludes the proof.
\end{proof}
We now pass from this purely discrete comparison to an estimate against the exact homogenized solution, which adds the finite difference discretization error to the coefficient approximation error.

\begin{theorem}[Continuous Approximation]\label{thm:main_theorem}
Let $\bar u$ be the solution of \eqref{eq:hom-1D-helmholtz} for some
$b\in C_b^2([0,L])$, and denote by
$\bar{\vec u}=(\bar u(X_1),\ldots,\bar u(X_{M_x}))^T$.
Let $\vec{v}_h$ be the fixed-point of WHMM
in \eqref{eq:whifp} for
the vector 
$\vec{b}=( b(X_1),\ldots,b(X_{M_x}))^T$.
Assume
\ref{assumption:H=n*eps}, \ref{assumption:delta},
\ref{assumption:resolution} and
\ref{assumption:skokram}.
 Then the following absolute error estimate holds
\begin{equation}\label{eq:cont-abs-estimate}
  \|\vec{\bar{u}} - \vec{v}_h\|_H \leq C_{\rm abs} \left[
  \frac{(\varepsilon/\eta)^q + H^2\omega^2}{\delta^2} \right]
  \|b\|_{H^2_\kappa},
\end{equation}
where $C_{\rm abs}$ only depends on $a$ and $q$. Furthermore,
suppose also that
\begin{equation}\label{eq:omegaHassump}
  \min\Bigl(H^2\omega^3, \omega (\varepsilon/\eta)^q\Bigr)\leq \delta.
\end{equation}
If $b\in C_b^2(0,L)\cap C_0([0,L])$, there exists another constant
$C_{\rm rel}>0$ that depends on $a$ and $q$, but is independent of
$\omega$ and $b$, such that the following relative error estimate holds
\begin{equation}\label{eq:cont-rel-estimate1}
  \frac{\|\vec{\bar{u}}- \vec{v}_h\|_H}{\|\vec{\bar{u}}\|_H}
  \leq C_{\rm rel}\frac{\omega(\varepsilon/\eta)^q +H^2 \omega^3
  }{\delta}, \qquad
  \forall \omega\geq \omega_0,
\end{equation}
where $\omega_0$ depends on $b$. Finally, assume
$b(x)=g((2x/L-1)\omega)$ where $g\in C^3({\mathbb R})$ with
supp\,$g\subset(-1,1)$, and that
\begin{equation}\label{eq:omegaHassump2}
  H\omega \le \sqrt{\bar{a}}\sqrt{\frac{5}{\mathcal D_g+1}},
\end{equation}
where
${\mathcal D}_g$ is
the $g$-dependent constant in Lemma~\ref{lem:invstability}. Then
\eqref{eq:cont-rel-estimate1} holds with $\omega_0=1$. In this case
$C_{\rm rel}$ remains independent of $\omega$, but may now depend on
$g$, and as before on $a$ and $q$.

\end{theorem}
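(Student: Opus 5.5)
We split the error with the triangle inequality,
\[
\|\vec{\bar u}-\vec v_h\|_H\le \|\vec{\bar u}-\vec{\bar u}_h\|_H+\|\vec{\bar u}_h-\vec v_h\|_H ,
\]
where $\vec{\bar u}_h=A^{-1}\vec b$ with $A=\bar a D^2+\omega^2 I$ as in the proof of Theorem~\ref{thm:discrete-error}. The second term is the coefficient error and is handled by Theorem~\ref{thm:discrete-error}. The first term is a pure finite difference consistency error. Since $\bar u\in C^4_b$, the standard truncation analysis gives
\[
A\vec{\bar u}=\vec b+\vec\tau,\qquad |\tau_j|\le \tfrac{\bar a H^2}{12}\|\bar u_{xxxx}\|_\infty .
\]
It is more convenient to express $\vec\tau$ in an $L^2$-type norm, writing it through the integral form of the Taylor remainder of $\bar u_{xxxx}$ on $[X_{j-1},X_{j+1}]$, so that $\|\vec\tau\|_H\lesssim \bar aH^2\|\bar u_{xxxx}\|_{L^2}$. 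Then Lemma~\ref{lem:stability} gives
\[
\|\vec{\bar u}-\vec{\bar u}_h\|_H\le \|\vec\tau\|_H/(\omega\delta).
\]
Differentiating the equation gives $\bar a\bar u_{xxxx}=b_{xx}-\omega^2\bar u_{xx}$ and $\bar a\bar u_{xx}=b-\omega^2\bar u$. Continuous stability with gap $\delta_0$ gives $\|\bar u\|_{L^2}\lesssim\|b\|_{L^2}/(\omega\delta)$. Combining these,
\[
H^2\|\bar u_{xxxx}\|_{L^2}\lesssim \frac{H^2\omega^4}{\bar a\,\omega\delta}\,\|b\|_{H^2_\kappa}
\]
after using $\omega\ge\delta$ and $\kappa^{-4}\propto\omega^{-4}$. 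Hence the first term is $\lesssim H^2\omega^2\delta^{-2}\|b\|_{H^2_\kappa}$. For the second term, the first bound in \eqref{eq:discrete-absolute-bound} together with \eqref{eq:interp-first} turns $\|\vec b\|_H$ into $\|b\|_{L^2}+H^2\|b_{xx}\|_{L^2}$. By \ref{assumption:resolution}, $H^2\lesssim\kappa^{-2}$, and $\kappa^{-2}\|b_{xx}\|\lesssim\|b\|_{H^2_\kappa}$. This yields \eqref{eq:cont-abs-estimate}.

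For the relative estimate \eqref{eq:cont-rel-estimate1} we need a lower bound on $\|\vec{\bar u}\|_H$. The natural route is the reverse interpolation inequality \eqref{eq:interp-second}, which requires $b$ to vanish at the boundary so that the interpolant estimates apply:
\[
\|\bar u\|_{L^2}\le\|\vec{\bar u}\|_H+\tfrac{H^2}{\pi^2}\|\bar u_{xx}\|_{L^2}.
\]
We then need $\|\bar u\|_{L^2}$ to dominate, namely $\|\bar u_{xx}\|\lesssim\kappa^2\|\bar u\|+\|b\|/\bar a$. Since $H^2\kappa^2\le1$ is not small, the $\omega$-dependence must come from somewhere, and for fixed $b$ we have $\|\bar u\|_{L^2}\sim\|b\|/\omega^2$ as $\omega\to\infty$ away from resonance. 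This is where $\omega_0(b)$ enters. We would prove the relative bound by writing the error as two pieces:
\begin{itemize}
\item The coefficient part, via \eqref{eq:discrete-relative-bound} directly, which is already relative to $\|\vec{\bar u}_h\|_H$. We then compare $\|\vec{\bar u}_h\|_H$ with $\|\vec{\bar u}\|_H$ using the consistency bound, provided the consistency error is at most half, which is the role of \eqref{eq:omegaHassump} in absorbing terms.
\item The consistency part, whose relative size is $\|A^{-1}\vec\tau\|_H/\|\vec{\bar u}\|_H\lesssim H^2\omega^3/\delta$. This uses $\|\bar u_{xxxx}\|\lesssim\kappa^4\|\bar u\|+\omega^2\|b\|+\|b_{xx}\|$, and for $\omega\ge\omega_0(b)$ the $b$-terms are bounded by $\omega^4\|\bar u\|$.
\end{itemize}

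The \textbf{main obstacle} is this last step: a lower bound $\|\bar u\|_{L^2}\gtrsim\omega^{-2}(\|b\|+\omega^{-2}\|b_{xx}\|)$. For the scaled family $b=g((2x/L-1)\omega)$ this should hold uniformly for $\omega\ge1$, and this is exactly what Lemma~\ref{lem:invstability} with constant $\mathcal D_g$ provides, presumably $\|b\|_{H^2_\kappa}\le\mathcal D_g\,\omega^2\|\bar u\|$-type control. Condition \eqref{eq:omegaHassump2} is then what makes the interpolation correction $H^2\|\bar u_{xx}\|$ at most a fixed fraction of $\|\bar u\|$. Concretely, expressing $\bar u_{xx}=(b-\omega^2\bar u)/\bar a$ gives
\[
\frac{H^2\|\bar u_{xx}\|}{\|\bar u\|}\le \frac{H^2\omega^2}{\bar a}\,(\mathcal D_g+1),
\]
and the resulting inequality is closed using \eqref{eq:omegaHassump2}, so that $\omega_0=1$. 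In the general case we would try a contradiction/limit argument in $\omega$ (for example $\omega^2\bar u\to b$ weakly) to obtain the existence of $\omega_0(b)$.
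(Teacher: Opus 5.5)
Your proof of the absolute estimate \eqref{eq:cont-abs-estimate} matches the paper's step for step: the split $E_1+E_2$, the integral Taylor remainder with Cauchy--Schwarz for the truncation error, Lemma~\ref{lem:stability}, differentiating the equation twice, and \eqref{eq:discrete-absolute-bound} with \eqref{eq:interp-first} and \ref{assumption:resolution} for $E_2$. Your structured-case closing of \eqref{eq:interp-second} via Lemma~\ref{lem:invstability} and \eqref{eq:omegaHassump2} is also the paper's. In the relative estimate, however, the role you give \eqref{eq:omegaHassump} is wrong. It is a $\min$-condition: when $\omega(\varepsilon/\eta)^q\le\delta$ it puts no restriction on $H^2\omega^3/\delta$. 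So your bound $E_1\le C'\frac{H^2\omega^3}{\delta}\|\vec{\bar u}\|_H$ does not give $E_1\le\frac12\|\vec{\bar u}\|_H$, and the absorption you describe cannot be justified. None is needed. Insert $\|\vec{\bar u}_h\|_H\le\|\vec{\bar u}\|_H+E_1$ into \eqref{eq:discrete-relative-bound}. The only new term is $C_3C'\frac{\omega(\varepsilon/\eta)^q}{\delta}\cdot\frac{H^2\omega^3}{\delta}\|\vec{\bar u}\|_H$, and \eqref{eq:omegaHassump} says exactly that this product of two ratios is at most $\max\bigl(H^2\omega^3,\omega(\varepsilon/\eta)^q\bigr)/\delta$ times $C_3C'\|\vec{\bar u}\|_H$.

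Second, in the general case you only propose to ``try'' a limit argument. What is needed is the first half of Lemma~\ref{lem:invstability}, $\|b\|_{H^2_\kappa}\le 4\omega^2\|\bar u\|_{L^2}$ for $\omega\ge\omega_b$, and the explicit constant matters. Here only \ref{assumption:resolution} is available to close $\|\bar u\|_{L^2}\le\|\vec{\bar u}\|_H+\frac{H^2\omega^2}{\bar a\pi^2}(\mathcal D+1)\|\bar u\|_{L^2}$, which requires $\mathcal D+1<\pi^2$ (with $\mathcal D=4$ the factor is $5/\pi^2$). Moreover, $\mathcal D$ must be independent of $b$ for $C_{\rm rel}$ to be. A qualitative existence statement for $\omega_0(b)$ delivers neither unless you fix such a $\mathcal D$ in advance. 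Your suggested tool, $\omega^2\bar u\rightharpoonup b$, is also not available: \ref{assumption:delta} only gives $\delta>0$, so $\|\omega^2\bar u\|_{L^2}$ can be unbounded as $\omega\to\infty$ near resonances. The robust lower bound comes from the low modes. With $\gamma_0=L\sqrt{2}/(\pi\sqrt{\bar a})$ one has $|\omega^2-\bar a(k\pi/L)^2|\le\omega^2$ for $k\le\gamma_0\omega$, hence $\omega^4\|\bar u\|_{L^2}^2\ge\sum_{k\le\gamma_0\omega}\hat b_k^2$. This sum is at least $\tfrac{1}{16}\|b\|_{H^2_\kappa}^2$ as soon as $\Phi_b$ and $\Phi_{b_{xx}}$ evaluated at $\gamma_0\omega$ are at least $1/2$, which is how the paper defines $\omega_b$.
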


\begin{remark}
In contrast to the discrete discrete comparison
in Theorem~\ref{thm:discrete-error}, Theorem~\ref{thm:main_theorem} compares the WHMM
solution with the exact homogenized solution. The error therefore has
two sources: the coefficient approximation error, of order
$(\varepsilon/\eta)^q$, and the finite difference discretization
error, of order $H^2\omega^2$. Similar to
Theorem~\ref{thm:discrete-error}, both contributions to the absolute
error in \eqref{eq:cont-abs-estimate} are amplified by the factor
$\delta^{-2}$, where $\delta$ is the eigenvalue gap. The relative
estimate in \eqref{eq:cont-rel-estimate1} shows in addition that the
same two error contributions are amplified by the factor
$\omega/\delta$, in the same way as in
Theorem~\ref{thm:discrete-error}. In particular, the finite
difference contribution appears as $H^2\omega^3$, which has the
standard pollution-error form for
a second order Helmholtz discretization. In the
general case, the relative estimate only holds for $\omega\ge
\omega_0$, where $\omega_0$ depends on $b$. Essentially, a sufficient
portion of the energy of $b$ must be contained in frequencies below
$\omega_0$; compare with Remark~\ref{remark:counter-ex}. For alternative
assumptions that lead to relative error estimates, we refer to
\cite{gander2025fourieranalysisfinitedifference}. The structured case
$b(x)=g((2x/L-1)\omega)$ is useful, since it yields a relative estimate valid
for all $\omega\ge 1$. This class includes, for example, localized
smooth right-hand sides such as mollified delta-type source terms
obtained by scaling a compactly supported $g$. In this case,
however, the constant in the relative estimate may also depend on the profile $g$.
\end{remark}

\begin{proof}[Proof of Theorem \ref{thm:main_theorem}]

We write the error $\vec{\bar{u}} - \vec{v}_h = (\vec{\bar{u}} -
\vec{\bar{u}}_h) + (\vec{\bar{u}}_h - \vec{v}_h)$,
where $\vec{\bar{u}}_h$ is the solution
to \eqref{eq:discrete-helmholtz-1d}
with the same $\vec{b}$ as in the
theorem statement.
By the triangle inequality
\begin{equation} \label{eq:triangle-ineq}
  \| \vec{\bar{u}} - \vec{v}_h\|_H \leq \|\vec{\bar{u}} -
  \vec{\bar{u}}_h\|_H + \|\vec{\bar{u}}_h - \vec{v}_h\|_H =: E_1+E_2.
\end{equation}
For $E_2$ we get from
\eqref{eq:discrete-relative-bound} in Theorem~\ref{thm:discrete-error},
that
\begin{equation}\label{eq:E2a}
  E_2 \leq C_3 \frac{\omega}{\delta} \Bigl(\frac{\varepsilon}{\eta}\Bigr)^q
  \|\vec{\bar{u}}_h\|_H.
\end{equation}
Moreover, by \eqref{eq:interp-first} and the fact that
$H\omega\leq \sqrt{\bar{a}}$,
\begin{multline}\label{eq:H-norm-error}
  \|\vec{b}\|_H \leq
  \sqrt{3}\|b\|_{L^2} + \frac{\sqrt{3}}{\pi^2}H^2 \|b_{xx}\|_{L^2} \\
  =    \sqrt{3}
  \left(\|b\|_{L^2} + \frac{H^2\omega^2}{\bar{a}\pi^2}
  \frac{\bar{a}}{\omega^2} \|b_{xx}\|_{L^2}\right)
  \leq \sqrt{6}\|b\|_{H^2_\kappa}.
\end{multline}
Therefore, using \eqref{eq:discrete-absolute-bound} in
Theorem~\ref{thm:discrete-error},
\begin{equation}\label{eq:E2b}
  E_2 \leq \frac{C_1 }{\delta^2} \left(\frac{\varepsilon}{\eta}\right)^q
  \|\vec{b}\|_H
  \leq
  \frac{C_1\sqrt{6} }{\delta^2} \left(\frac{\varepsilon}{\eta}\right)^q
  \|b\|_{H^2_\kappa}.
\end{equation}
Next, we consider $E_1$. Define the right hand side $\tilde{\vec{b}}$
when the finite difference discretization in
\eqref{eq:discrete-helmholtz-1d} is applied to the exact solution values
$\vec{\bar{u}}$,
\begin{equation} \label{eq:fd-helmholtz-ubar}
  (\bar{a} D^2 + \omega^2 I) \vec{\bar{u}} = \tilde{\vec{b}}.
\end{equation}
Subtracting \eqref{eq:discrete-helmholtz-1d} from
\eqref{eq:fd-helmholtz-ubar}, we get:
\begin{equation}
  (\bar{a}D^2 + \omega^2 I) \left( \vec{\bar{u}} - \vec{\bar{u}}_h
  \right) = \tilde{\vec{b}}-\vec{b},
\end{equation}
and by the stability estimate in Lemma \ref{lem:stability},
\begin{align}
  E_1\leq \tfrac{1}{\delta\omega} \|\tilde{\vec{b}}-\vec{b}\|_H.
  \label{eq:error-as-r}
\end{align}
We next estimate the elements of the difference
$\tilde{\vec{b}}-\vec{b}$. Since $b\in C_b^2(0,L)$ the PDE
\eqref{eq:hom-1D-helmholtz} gives that
$\bar{u}\in C_b^4(0,L)$. Therefore, by Taylor's theorem,
\[
  \bar{u}(x) = \bar{u}(X_j) + \bar{u}_x(X_j)(x-X_j) +
  \tfrac{1}{2}\bar{u}_{xx}(X_j)(x-X_j)^2 +
  \tfrac{1}{3!}\bar{u}_{xxx}(X_j)(x-X_j)^3 + \Psi^j(x),
\]
where
\[
  \Psi^j(x) = \frac{1}{3!}\int_{X_j}^x (x - s)^3 \bar{u}_{xxxx}(s) \, ds.
\]
From now on, denote by
$\bar{u}_j = \bar{u}(X_j)$,
$(\bar{u}_{xx})_j = \bar{u}_{xx}(X_j)$ and so on.
Then
\begin{align*}
  \tilde{b}_j-b_j&= \bar{a}\frac{\bar{u}_{j+1} - 2\bar{u}_{j} +
  \bar{u}_{j-1}}{H^2} + \omega^2 \bar{u}_j-b_j = \bar{a}\left[
  (\bar{u}_{xx})_j + \frac{\Psi^j_{j+1} + \Psi^j_{j-1}}{H^2} \right]
  + \omega^2 \bar{u}_j -b_j\\
  &= \left( \bar{a}(\bar{u}_{xx})_j + \omega^2 \bar{u}_j -b_j\right)
  + \bar{a} \frac{\Psi^j_{j+1} + \Psi^j_{j-1}}{H^2}
  = \bar{a} \frac{\Psi^j_{j+1} + \Psi^j_{j-1}}{H^2}.
\end{align*}
To estimate $\tilde{b}_j-b_j$, we apply the Cauchy-Schwarz inequality,
\begin{align*}
  3!|\Psi^j_{j-1}+\Psi^j_{j+1}| &=
  \!\! \left|\int_{X_{j-1}}^{X_{j}} (s-X_{j-1})^3 \bar{u}_{xxxx}(s) \, ds
  +\int_{X_{j}}^{X_{j+1}} (X_{j+1} - s)^3 \bar{u}_{xxxx}(s) \, ds
  \right|
  \\
  &\leq \!\!\left[ \int_{X_{j-1}}^{X_{j+1}} \min\left(
  |s-X_{j-1}|,|X_{j+1} - s|\right)^6 \, ds \right]^\frac{1}{2}
  \!\!\left[ \int_{X_{j-1}}^{X_{j+1}} |\bar{u}_{xxxx}(s)|^2
  \, ds \right]^\frac{1}{2} \\
  &=
  \sqrt{\frac27}H^{\frac72}
  \|\bar{u}_{xxxx}\|_{L^2([X_{j-1}, X_{j+1}])}.
\end{align*}
Therefore,
\begin{align}
  \|\tilde{\vec{b}}-\vec{b}\|_H^2
  &= \sum_{j=1}^{M_x} H |\tilde{b}_j-b_j|^2
  = \frac{\bar{a}^2}{H^3}\sum_{j=1}^{M_x}
  |\Psi^j_{j-1}+\Psi^j_{j+1}|^2 \nonumber\\
  &=
  \frac{\bar{a}^2H^4}{126}\sum_{j=1}^{M_x}\|\bar{u}_{xxxx}\|_{L^2([X_{j-1},
  X_{j+1}])}^2
  \nonumber\\
  &=\frac{\bar{a}^2H^4}{126}\sum_{j=1}^{M_x}
  \int_{X_{j-1}}^{X_{j+1}}|\bar{u}_{xxxx}(s)|^2\, ds
  \leq \frac{\bar{a}^2H^4}{63}\|\bar{u}_{xxxx}\|_{L^2}^2.
  \label{eq:bdiff}
\end{align}
Next, we bound $\|\bar{u}_{xxxx}\|_{L^2}$. Differentiating
\eqref{eq:hom-1D-helmholtz} twice, we get:
\[
  \bar{u}_{xxxx} = \frac{b_{xx}-\omega^2 \bar{u}_{xx}}{\bar{a}}
  = \frac{\bar{a} b_{xx} - \omega^2(b - \omega^2 \bar{u})}{\bar{a}^2},
\]
so,
\[
  \bar{a}\|\bar{u}_{xxxx}\|_{L^2}
  \leq \|b_{xx}\|_{L^2} + \tfrac{\omega^2}{\bar{a}}\|b\|_{L^2} +
  \tfrac{\omega^4}{\bar{a}} \|\bar{u}\|_{L^2}
  \leq \tfrac{\omega^2}{\bar{a}}\left(\sqrt{2}\|b\|_{H^2_\kappa} +
  \omega^2 \|\bar{u}\|_{L^2}\right).
\]
Combining this with \eqref{eq:error-as-r} and \eqref{eq:bdiff} we obtain
\begin{equation}\label{eq:E1a}
  E_1
  \leq D_1\tfrac{H^2\omega}{\delta}
  \left(
  \|b\|_{H^2_\kappa} + \omega^2 \|\bar{u}\|_{L^2}\right),
\end{equation}
where $D_1=\sqrt{2/63}/\bar{a}$. From the stability estimate in Lemma
\ref{lem:stability},
\[ \|\bar{u}\|_{L^2} \leq \tfrac{1}{\omega \delta} \|b\|_{L^2}, \]
which, after also using \ref{assumption:delta}, further gives
\begin{align*}
  E_1
  \leq D_1\tfrac{H^2\omega}{\delta}
  \left(
  \|b\|_{H^2_\kappa} + \tfrac{\omega}{\delta} \|b\|_{L^2}\right)
  \leq 2D_1\tfrac{H^2\omega^2}{\delta^2}
  \|b\|_{H^2_\kappa}.
\end{align*}
Together with \eqref{eq:E2b} this proves \eqref{eq:cont-abs-estimate}
with $C=\max(\sqrt{6}C_1,2D_1)$.

To prove the relative estimate \eqref{eq:cont-rel-estimate1} we use
Lemma~\ref{lem:invstability}, 
and assume that $\omega$ is larger than $\omega_0=\omega_b$ in the
general case, and larger than $\omega_0=1$ 
when $b=g((2x/L-1)\omega)$.
We then apply it to $\|b\|_{H^2_\kappa}$ in \eqref{eq:E1a},
\begin{align*}
  E_1
  &\leq D_1
  \tfrac{H^2\omega}{\delta}
  \left(
    {\mathcal D}\omega^2
    \|\bar{u}\|_{L^2}
    + \omega^2\|\bar{u}\|_{L^2}
  \right)
  =D_1 ({\mathcal D} + 1)
  \tfrac{H^2\omega^3}{\delta}
  \|\bar{u}\|_{L^2}.
\end{align*}
where $\mathcal D=4$ in the general case and $\mathcal D=\mathcal
D_g$ in the structured case.
 Then  we note that
$$
\|\vec{\bar{u}}_h\|_{H}\leq
\|\vec{\bar{u}}\|_{H}+\|\vec{\bar{u}}-\vec{\bar{u}}_h\|_{H}
=\|\vec{\bar{u}}\|_{H}+E_1.
$$
Thus, by \eqref{eq:E2a}
\begin{align*}
  E_1+E_2
  &\leq E_1+C_3 \frac{\omega}{\delta} \Bigl(\frac{\varepsilon}{\eta}\Bigr)^q
  \|\vec{\bar{u}}_h\|_H
  \leq E_1+C_3 \frac{\omega}{\delta} \Bigl(\frac{\varepsilon}{\eta}\Bigr)^q
  (\|\vec{\bar{u}}\|_{H}+E_1)\\
  &\leq
  C_3 \frac{\omega}{\delta}
  \Bigl(\frac{\varepsilon}{\eta}\Bigr)^q\|\vec{\bar{u}}\|_{H} + D_1
  ({\mathcal D} + 1)
  \frac{H^2\omega^3}{\delta}
  \left(
  1+C_3\frac{\omega}{\delta} \Bigl(\frac{\varepsilon}{\eta}\Bigr)^q\right)
  \|\bar{u}\|_{L^2}
\end{align*}
To bound $||\bar{u}||_{L^2}$ by $\|\vec{\bar{u}}\|_{H}$ we use the estimate in \eqref{eq:interp-second}, and the fact that
$\bar{u}_{xx}=(b-\omega^2\bar{u})/\bar{a}$ by
\eqref{eq:hom-1D-helmholtz} and Lemma~\ref{lem:invstability},
\begin{multline}
  ||\bar{u}||_{L^2}\leq \|\vec{\bar{u}}\|_{H}+
  \tfrac{H^2}{\pi^2}\|\bar{u}_{xx}\|_{L^2}
  \leq \|\vec{\bar{u}}\|_{H}+
  \tfrac{H^2}{\bar{a}\pi^2}(\|b\|_{L^2}+\omega^2\|\bar{u}\|_{L^2}) \\
  \leq \|\vec{\bar{u}}\|_{H}+
  \tfrac{H^2\omega^2}{\bar{a}\pi^2}({\mathcal D}+1)\|\bar{u}\|_{L^2}.
\end{multline}
That means that since $H\omega\leq \sqrt{\bar{a}}$ in the general case,
$$
||\bar{u}||_{L^2}\leq(1-5/\pi^2)^{-1} \|\vec{\bar{u}}\|_{H}.
$$
By construction, the same inequality holds when $b=g((2x/L-1)\omega)$
because of the additional assumption
\eqref{eq:omegaHassump2}.
Thus
\begin{align*}
  E_1+E_2
  &\leq \left[
    C_3 \frac{\omega}{\delta} \Bigl(\frac{\varepsilon}{\eta}\Bigr)^q
    + D_1 ({\mathcal D} + 1)
    \frac{H^2\omega^3}{\delta}
    \left(
      1+C_3\frac{\omega}{\delta}
  \Bigl(\frac{\varepsilon}{\eta}\right)^q\Bigr) (1-5/\pi^2)^{-1}\right]
  \|\vec{\bar{u}}\|_{H} \\
  &\leq
  C\frac{\omega
  \left(\varepsilon/\eta\right)^q+H^2\omega^3}{\delta}\|\vec{\bar{u}}\|_{H}.
\end{align*}
Here we can take
$C=\max(C_3,C')+C_3C'$
with
$C'=D_1({\mathcal D}+1)(1-5/\pi^2)^{-1})$,
since by \eqref{eq:omegaHassump}
$$
H^2\omega^3\,\omega\Bigl(\frac{\varepsilon}{\eta}\Bigr)^q
\leq \delta
\max\left(
H^2\omega^3,\ \omega\Bigl(\frac{\varepsilon}{\eta}\Bigr)^q\right).
$$
Dividing by $\|\vec{\bar u}\|_H$ completes the proof of
\eqref{eq:cont-rel-estimate1}.
\end{proof}

\subsection{Supporting Lemmas}

In the analysis below we use sine series expansions of functions.
The main points are summarized here.
A function $f\in L^2(0,L)$ can be expanded in sine series
with coefficients $\hat{f}_k$ such that
\begin{subequations}\label{eq:dft}
\begin{gather}
f(x) = \sqrt{\frac{2}{L}}\sum_{k=1}^\infty \hat{f}_k
\sin\left(\frac{k\pi x}{L}\right),
\quad
\hat{f}_k = \sqrt{\frac{2}{L}}\int_0^L f(x) \sin\left(\frac{k\pi
x}{L}\right) dx,
\\
\|f\|_{L^2}^2 = \sum_{k=1}^\infty |\hat{f}_k|^2.
\end{gather}
\end{subequations}
If additionally $f\in H^2(0,L)\cap H^1_0(0,L)$ then
$-(k\pi/L)^2\hat{f}_k$
are the coefficients of $f_{xx}$.

\begin{lemma}[Stability]\label{lem:stability}
Consider \eqref{eq:hom-1D-helmholtz} and its discrete counterpart
\eqref{eq:discrete-helmholtz-1d}. Assume \ref{assumption:delta} and
that $b\in L^2(0,L)$. Then
\begin{equation}\label{eq:discrete-stability}
\|\vec{\bar{u}}_h\|_H \leq \tfrac{1}{\omega \delta}\|\vec{b}\|_H,
\end{equation}
and
\begin{equation}\label{eq:continuous-stability}
\|\bar{u}\|_{L^2} \leq \tfrac{1}{\omega \delta} \|b\|_{L^2}.
\end{equation}
\end{lemma}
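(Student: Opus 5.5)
Both estimates are proved by diagonalizing the operators in the sine basis and bounding the reciprocal symbols from below by $\omega\delta$.

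\textbf{Discrete estimate \eqref{eq:discrete-stability}.} The matrix $D^2=D_-D_+$ with homogeneous Dirichlet conditions is symmetric. Its eigenvectors are the discrete sine vectors $(\sin(k\pi X_j/L))_{j=1}^{M_x}$ for $k=1,\dots,M_x$, and these are orthogonal in the inner product underlying $\|\cdot\|_H$. The eigenvalues are $-\lambda_k^2$ with $\lambda_k=\tfrac{2}{H}\sin(k\pi H/(2L))>0$. Hence $A=\bar a D^2+\omega^2 I$ is symmetric with eigenvalues $\omega^2-\bar a\lambda_k^2$, and $\|A^{-1}\|_H=\max_k|\omega^2-\bar a\lambda_k^2|^{-1}$. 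We factor
\[
|\omega^2-\bar a\lambda_k^2|=|\omega-\sqrt{\bar a}\lambda_k|\,(\omega+\sqrt{\bar a}\lambda_k).
\]
The first factor is at least $\delta_a\ge\delta$ by definition. The second factor is at least $\omega$ because $\lambda_k>0$. Therefore $\|A^{-1}\|_H\le 1/(\omega\delta)$, and applying this to $\vec{\bar u}_h=A^{-1}\vec b$ gives the claim. The same argument with $\alpha$ and $\delta_\alpha$ in place of $\bar a$ and $\delta_a$ gives the bound $\|B^{-1}\|_H\le 1/(\omega\delta)$ that is used in the proof of Theorem~\ref{thm:discrete-error}.

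\textbf{Continuous estimate \eqref{eq:continuous-stability}.} We expand $b$ and $\bar u$ in the sine series \eqref{eq:dft}. Since $\bar u\in H^2(0,L)\cap H^1_0(0,L)$, the coefficients of $\bar u_{xx}$ are $-(k\pi/L)^2\hat{\bar u}_k$. The equation \eqref{eq:hom-1D-helmholtz} therefore becomes, coefficient by coefficient,
\[
\bigl(\omega^2-\bar a (k\pi/L)^2\bigr)\hat{\bar u}_k=\hat b_k .
\]
The same factorization applies:
\[
|\omega^2-\bar a(k\pi/L)^2|=|\omega-\sqrt{\bar a}\,k\pi/L|\,(\omega+\sqrt{\bar a}\,k\pi/L)\ge \delta_0\,\omega\ge\omega\delta .
\]
Hence $|\hat{\bar u}_k|\le|\hat b_k|/(\omega\delta)$ for every $k$. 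Summing the squares and using Parseval's identity in \eqref{eq:dft} gives $\|\bar u\|_{L^2}\le\|b\|_{L^2}/(\omega\delta)$.

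The argument uses only that $\delta>0$ and $\omega>0$ from \ref{assumption:delta}. The one point requiring care is the discrete claim that the sine vectors are orthogonal eigenvectors of $D^2$ in $\|\cdot\|_H$. This is standard, and it follows from the symmetry of $D^2$, since the weight $H$ is a constant multiple of the Euclidean inner product.
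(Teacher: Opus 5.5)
Your proof is correct and follows the paper's argument: diagonalize $D^2$ orthogonally and the continuous operator via the sine series, then bound each symbol $|\omega^2-\bar a\lambda^2|=|\omega-\sqrt{\bar a}\lambda|\,(\omega+\sqrt{\bar a}\lambda)$ from below by $\delta\omega$. Your version is slightly cleaner than the paper's. You bound the two factors separately for every eigenvalue, whereas the paper equates the norm of the inverse with its value at the minimizer of $|\omega-\sqrt{\bar a}\lambda_i|$; that equality need not hold, although the resulting inequality does.
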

\begin{proof}
We start with the discrete approximation. Since $D^2$ is symmetric
and negative definite, it can be diagonalized by an orthogonal matrix
$U$ as $D^2 = -U\Lambda^2U^T$ where $\Lambda_{ii} = \lambda_i > 0$. Thus,
\[ \vec{\bar{u}}_h = U(\omega^2 I - \bar{a} \Lambda^2)^{-1} U^T \vec{b}. \]
Taking the norm of both sides:
\[ \|\vec{\bar{u}}_h\|_H \leq \|(\omega^2 I - \bar{a}
\Lambda^2)^{-1}\|_H \cdot \|\vec{b}\|_H. \]
Since $(\omega^2 I - \bar{a} \Lambda^2)^{-1}$ is diagonal, its 2-norm
(and therefore $H$-norm) is the maximum element in absolute value. Define,
\[ 
\lambda^* = \arg \min_i | \omega - \sqrt{\bar{a}} \lambda_i |. 
\]
Then
\[ \|(\omega^2 I - \bar{a} \Lambda^2)^{-1}\|_H = \frac{1}{| \omega -
\sqrt{\bar{a}} \lambda^* | \cdot| \omega + \sqrt{\bar{a}} \lambda^*|}
= \frac{1}{\delta_a |\omega + \sqrt{\bar{a}} \lambda^*|} \leq
\frac{1}{\delta \omega}. \]
Thus,
\[ \|\vec{\bar{u}}_h\|_H \leq \frac{\|\vec{b}\|_H}{\omega \delta}. \]
This proves \eqref{eq:discrete-stability}. 

We follow a similar
procedure in the continuous setting.
Since $b\in L^2(0,L)$ it follows that
$\bar{u}\in H^2(0,L)\cap H^1_0(0,L)$.
We can therefore expand both functions in sine series with
coefficients $\hat{b}_k$ and $\hat{u}_k$. Moreover, the coefficients
of $\bar{u}_{xx}$ are $-(\pi k/L)^2\hat{u}_k$. We therefore get the
relation $\hat{u}_k= \frac{\hat{b}_k}{\omega^2 - \bar{a}\cdot(\pi k / L)^2}$.
Similar to the discrete case, we get
\begin{align*}
\|\bar{u}\|_{L^2}^2 &= \sum_{k=1}^\infty |\hat{u}_k|^2  =
\sum_{k=1}^\infty \left| \frac{\hat{b}_k}{\omega^2 - \bar{a}\cdot
(\pi k / L)^2} \right|^{2} \leq \frac{1}{\omega^2
\delta_0^2}\sum_{k=1}^\infty |\hat{b}_k|^2 \leq \frac{1}{\omega^2
\delta^2} \|b\|_{L^2}^2.
\end{align*}
Taking the square root of both sides proves
\eqref{eq:continuous-stability}.
We note finally that $\delta>0$
by \ref{assumption:delta},
making the right hand sides well-defined.
This completes the proof.
\end{proof}

The next lemma considers a cumulative energy distribution function.
For non-zero $b\in L^2(0,L)$ we define it based on the sine series
coefficients,
\begin{equation}\label{eq:Phidef}
\Phi_b(\eta) := \frac{\sum_{1\leq k\leq \eta}|\hat{b}_k|^2}{\|b\|_{L^2}^2}.
\end{equation}
This function is used to characterize the right hand side in the
Helmholtz equation.

\begin{lemma}\label{lem:Phiprop}
Suppose $b\in L^2(0,L)$ is real and non-zero. Then $\Phi_b(\eta)$ defined in \eqref{eq:Phidef} is a
nondecreasing function from $[0,\infty)$ to $[0,1]$. It converges to
one when $\eta\to\infty$ and it is scale invariant,
$\Phi_{sb}=\Phi_{b}$ for all real $s\neq 0$.
Assume additionally that 
$g\in H^{1}(\R)$ is real, non-zero
and compactly supported in
$(-1,1)$.
If $b_\omega(x):=g((2x/L-1)\omega)$ and $\omega\geq 1$, there exists
a nondecreasing function $\bar{\Phi}_g:[0,\infty)\to [0,1]$ such
that $\lim_{\alpha\to\infty}\bar{\Phi}_g(\alpha)=1$ and
$$
\Phi_{b_\omega}(\alpha\omega)\geq
\bar\Phi_g(\alpha), \qquad \forall \omega\geq 1.
$$
\end{lemma}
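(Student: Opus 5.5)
The first part is elementary and follows from Parseval's identity in \eqref{eq:dft}. Each summand $|\hat{b}_k|^2$ is nonnegative, so $\Phi_b$ is nondecreasing, starts at $\Phi_b(0)=0$, and is bounded above by $\sum_k|\hat b_k|^2/\|b\|_{L^2}^2=1$. It tends to one because the tail $\sum_{k>\eta}|\hat b_k|^2$ of a convergent series vanishes as $\eta\to\infty$. Scale invariance holds because $\widehat{(sb)}_k=s\hat b_k$, so the factor $s^2$ cancels between numerator and denominator.

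For the second part I would work with the complementary tail $1-\Phi_{b_\omega}(\alpha\omega)=\sum_{k>\alpha\omega}|\hat b_k|^2/\|b_\omega\|_{L^2}^2$ and bound it above uniformly in $\omega\ge1$ by a quantity that tends to zero as $\alpha\to\infty$.

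The key tool is the $H^1$ seminorm. Since $\operatorname{supp} g\subset(-1,1)$ and $\omega\ge1$, the function $b_\omega$ vanishes near $x=0$ and $x=L$. Hence $b_\omega\in H^1_0(0,L)$, and the cosine coefficients of $b_\omega'$ are $(k\pi/L)\hat b_k$. This gives
\begin{equation*}
\sum_{k>\alpha\omega}|\hat b_k|^2\le \Bigl(\frac{L}{\pi\alpha\omega}\Bigr)^2\sum_k (k\pi/L)^2|\hat b_k|^2=\Bigl(\frac{L}{\pi\alpha\omega}\Bigr)^2\|b_\omega'\|_{L^2}^2 .
\end{equation*}
I then compute both norms by the substitution $y=(2x/L-1)\omega$, under which $dx=\frac{L}{2\omega}dy$ and $b_\omega'(x)=\frac{2\omega}{L}g'(y)$. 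This yields
\begin{equation*}
\|b_\omega\|_{L^2}^2=\tfrac{L}{2\omega}\|g\|_{L^2(\R)}^2,\qquad
\|b_\omega'\|_{L^2}^2=\tfrac{2\omega}{L}\|g'\|_{L^2(\R)}^2,
\end{equation*}
where the full line integrals are exact because the support of $g$ maps inside $(0,L)$. Combining the three displays gives
\begin{equation*}
1-\Phi_{b_\omega}(\alpha\omega)\le \frac{4\|g'\|_{L^2}^2}{\pi^2\alpha^2\|g\|_{L^2}^2},
\end{equation*}
and the powers of $\omega$ cancel exactly.

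I therefore set $\bar\Phi_g(\alpha)=\max\bigl(0,\,1-\frac{4\|g'\|^2}{\pi^2\alpha^2\|g\|^2}\bigr)$, with $\bar\Phi_g(0)=0$. This function maps $[0,\infty)$ into $[0,1]$, is nondecreasing in $\alpha$, tends to one as $\alpha\to\infty$, and satisfies $\Phi_{b_\omega}(\alpha\omega)\ge\bar\Phi_g(\alpha)$ for all $\omega\ge1$ by the bound above.

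The only delicate step is justifying the coefficient identity for $b_\omega'$, that is, integration by parts without boundary terms. This is where $b_\omega\in H^1_0$ is needed, and it is guaranteed by the support condition together with $\omega\ge1$. The remaining steps are exact scaling computations.
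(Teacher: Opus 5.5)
Your proof is correct. The first part matches the paper, but for the second part you take a different and sharper route.

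The paper substitutes $y=(2x/L-1)\omega$ in the coefficient integral and integrates by parts once. It then bounds each coefficient pointwise, $|\hat b_k|\le \frac{\sqrt{2L}}{k\pi}\int_{-1}^1|g'|\,dy\le\frac{2\sqrt{L}}{k\pi}\|g'\|_{L^2(-1,1)}$, using an $L^1$--$L^2$ Cauchy--Schwarz step. Next it sums the tail, $\sum_{k>\alpha\omega}k^{-2}\le(\alpha\omega-1)^{-1}$, and divides by $\|b_\omega\|_{L^2}^2=\frac{L}{2\omega}\|g\|_{L^2}^2$. This gives $1-\Phi_{b_\omega}(\alpha\omega)\le \frac{8\|g'\|^2}{\pi^2\|g\|^2}\cdot\frac{1}{\alpha-1}$ for $\alpha>1$. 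There the pointwise bound does not see $\omega$ at all, and the factor $\omega$ coming from the normalization is absorbed only by the tail sum.

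You instead exploit orthogonality. The cosine coefficients of $b_\omega'$ are $(k\pi/L)\hat b_k$, and Bessel's inequality already suffices, so completeness of the cosine system is not needed. This turns the tail into a Chebyshev-type bound. The exact scalings $\|b_\omega\|^2\propto\omega^{-1}$ and $\|b_\omega'\|^2\propto\omega$ then cancel the $\omega$-dependence directly.

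What your route buys:
\begin{itemize}
\item The estimate $1-\Phi_{b_\omega}(\alpha\omega)\le\frac{4\|g'\|^2}{\pi^2\alpha^2\|g\|^2}$ decays like $\alpha^{-2}$ instead of $(\alpha-1)^{-1}$.
\item It holds for every $\alpha>0$ and is uniformly smaller than the paper's bound.
\item It avoids the lossy $L^1$--$L^2$ step.
\item It generalizes at once to $g\in H^m$, giving $\alpha^{-2m}$ decay.
\end{itemize}
Your $\bar\Phi_g$ is pointwise at least as large as the paper's. In Lemma~\ref{lem:invstability} it can therefore only lower the admissible $\gamma_g$, and hence $\mathcal{D}_g$, although that lemma needs nothing beyond the existence of some $\bar\Phi_g\to1$. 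The paper's argument is more hands-on and gives explicit per-coefficient decay, which yours does not.
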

\begin{proof}
The scale invariance and monotonicity follow directly from the
definition, since the terms in the sum are non-negative. The
convergence to one of $\Phi_b$ follows from Parseval's identity. For
the case when $b_\omega(x)=g((2x/L-1)\omega)$, we first note that
\begin{equation*}
\begin{aligned}
  \|b_\omega\|^2_{L^2}
  &=\int_0^L |b_\omega(x)|^2 dx
  =\int_0^L |g((2x/L-1)\omega)|^2 dx \\
  &=\frac{L}{2\omega}\int_{-\omega}^{\omega} |g(y)|^2 dy
  =\frac{L}{2\omega}\|g\|_{L^2({\mathbb R})}^2,
\end{aligned}
\end{equation*}
assuming $\omega\geq 1$. We then compute the sine series coefficients
of $b_\omega$ as
\begin{align*}
\hat{b}_k &= \sqrt{\frac{2}{L}}\int_0^L b(x) \sin\left(\frac{k\pi
x}{L}\right) dx = \frac{\sqrt{2L}}{2\omega}\int_{-\omega}^{\omega}
g(y)\sin\left(\frac{k\pi y}{2\omega}+\frac{k\pi}{2}\right) dy \\
&= \frac{\sqrt{2L}}{k\pi}\int_{-1}^1 g_x(y)\cos\left(\frac{k\pi
y}{2\omega}+\frac{k\pi}{2}\right) dy.
\end{align*}
Therefore, by the Cauchy–Schwarz inequality,
$$
|\hat{b}_k|\leq
\frac{\sqrt{2L}}{k\pi}\int_{-1}^1 |g_x|
\, dx \leq
\frac{2\sqrt{L}}{k\pi}\|g_x\|_{L^2(-1,1)},
$$
and, for $\omega\geq 1$, $\alpha>1$
\begin{align*}
\Phi_{b_\omega}(\alpha\omega)
&= 1-\frac{\sum_{k>\alpha\omega}|\hat{b}_k|^2}{\|b_\omega\|_{L^2}^2}
\geq
1 - \frac{2\omega
\frac{4L}{\pi^2}\|g_x\|^2_{L^2(-1,1)}}{L\|g\|_{L^2(-1,1)}^2}
\sum_{k>\alpha\omega} \frac{1}{k^2}
\\
&\geq 1 - \frac{8\omega\|g_x\|^2_{L^2(-1,1)}}{\pi^2\|g\|_{L^2(-1,1)}^2}
\int_{\alpha\omega-1}^\infty\frac{dx}{x^2}
\geq 1 - \frac{C\omega}{\alpha\omega-1}
\geq 1 - \frac{C}{\alpha-1}.
\end{align*}
We can therefore take
\[
  \bar{\Phi}_g(\alpha)=
  \begin{cases}
    0, & \alpha\leq C+1, \\
    1-\dfrac{C}{\alpha-1}, & \alpha> C+1,
  \end{cases}
\qquad
\text{where}
\qquad
  C=\frac{8\|g_x\|_{L^2(-1,1)}^2}{\pi^2 \|g\|_{L^2(-1,1)}^2}.
\]
This function is nondecreasing, takes values in $[0,1]$, converges to
one as $\alpha\to\infty$, and satisfies
$\bar\Phi_g(\alpha)\le \Phi_{b_\omega}(\alpha\omega)$
for all $\omega\ge 1$.
\end{proof}

\begin{lemma}[Inverse stability]\label{lem:invstability}
All frequencies in this lemma are assumed to satisfy \ref{assumption:delta}.
Let $b\in H^2(0,L)\cap H^1_0(0,L)$ be real and let $\bar u$ be the
solution to \eqref{eq:hom-1D-helmholtz} with right hand side $b$.
There exists
$\omega_b>0$, depending on $b$, such that,
\begin{equation}\label{eq:continuous-invstability}
\|b\|_{H_{\kappa}^2}
\leq
4\omega^2\|\bar u\|_{L^2},\qquad 
\forall\omega\geq \omega_b.
\end{equation}
If
$b(x)=g((2x/L-1)\omega)$ where $g\in H^3({\mathbb R})$ is real with compact
support in $(-1,1)$, there is a real number ${\mathcal D}_g$, which
depends on $g$ but is independent of $\omega$, such that
\begin{equation}\label{eq:continuous-invstability2}
\|b\|_{H_{\kappa}^2} \leq {\mathcal D}_g \omega^2
\|\bar{u}\|_{L^2},\qquad 
\forall\omega\geq 1.
\end{equation}
\end{lemma}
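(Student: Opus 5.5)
We work entirely in the sine basis \eqref{eq:dft}. Since $b\in H^2\cap H^1_0$, $\bar u\in H^2\cap H^1_0$, and $b_{xx}$ has coefficients $-\mu_k^2\hat b_k$ with $\mu_k=k\pi/L$. By \eqref{eq:hom-1D-helmholtz}, $\hat u_k=\hat b_k/(\omega^2-\bar a\mu_k^2)$. Writing $\kappa^2=\omega^2/\bar a$, we get
\[
\|b\|_{H^2_\kappa}^2=\sum_k |\hat b_k|^2\bigl(1+\mu_k^4/\kappa^4\bigr),\qquad
\omega^4\|\bar u\|_{L^2}^2=\sum_k \frac{|\hat b_k|^2}{(1-\mu_k^2/\kappa^2)^2}.
\]
Modes with $\mu_k\le\kappa$ are amplified in $\bar u$: the factor $(1-\mu_k^2/\kappa^2)^{-2}\ge1$. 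Modes with $\mu_k\gg\kappa$ are damped like $\kappa^4/\mu_k^4$, while $\|b\|_{H^2_\kappa}$ weights them by $1+\mu_k^4/\kappa^4$. The plan is therefore to bound $\|b\|_{H^2_\kappa}$ by the low-frequency part of the energy of $b$, which $\omega^2\|\bar u\|$ controls.

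\textbf{Step 1 (low modes control $\bar u$ from below).} Restricting to $k$ with $\mu_k\le\kappa/\sqrt2$ gives $1-\mu_k^2/\kappa^2\in[1/2,1]$, so each such term is at least $|\hat b_k|^2$. Hence
\[
\omega^4\|\bar u\|_{L^2}^2\ \ge\ \Phi_b\bigl(L\kappa/(\pi\sqrt2)\bigr)\,\|b\|_{L^2}^2,
\]
with $\Phi_b$ as in \eqref{eq:Phidef}. Resonance is not a concern here: near-resonant terms only enlarge the right side.

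\textbf{Step 2 (bound $\|b\|_{H^2_\kappa}$).} We have $\|b\|_{H^2_\kappa}^2=\|b\|_{L^2}^2+\kappa^{-4}\|b_{xx}\|_{L^2}^2$. For fixed $b$, $\|b_{xx}\|_{L^2}$ is a fixed number, so $\kappa^{-4}\|b_{xx}\|^2\le\|b\|^2$ once $\omega$ is large enough. By Lemma~\ref{lem:Phiprop}, $\Phi_b(\eta)\to1$, so for $\omega\ge\omega_b$ we also have $\Phi_b\ge1/8$ at the argument in Step 1. Combining, $\|b\|_{H^2_\kappa}^2\le2\|b\|^2\le16\,\omega^4\|\bar u\|^2$, which is \eqref{eq:continuous-invstability} with constant $4$. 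The threshold $\omega_b$ depends on $b$ only through $\|b_{xx}\|/\|b\|$ and $\Phi_b$.

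\textbf{Step 3 (scaled case).} Here $b_\omega=g((2x/L-1)\omega)$, and $\omega$ now enters $b$, so the argument must be uniform in $\omega$.

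For the high-derivative term, the change of variables from Lemma~\ref{lem:Phiprop} gives
\[
\|b_{xx}\|^2=(2\omega/L)^4\,\tfrac{L}{2\omega}\|g''\|^2 .
\]
Since $\|b\|^2=\tfrac{L}{2\omega}\|g\|^2$, it follows that $\kappa^{-4}\|b_{xx}\|^2/\|b\|^2=(2\sqrt{\bar a}/L)^4\|g''\|^2/\|g\|^2$, which is independent of $\omega$.

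For the low-frequency energy, the argument in Step 1 is $L\kappa/(\pi\sqrt2)=c\,\omega$ with $c$ fixed. Lemma~\ref{lem:Phiprop} only gives $\Phi_{b_\omega}(\alpha\omega)\ge\bar\Phi_g(\alpha)$, and $\bar\Phi_g(c)$ may be $0$ when $c$ is small. This is the main obstacle, because $\bar\Phi_g$ only guarantees mass below $\alpha\omega$ for large $\alpha$. To get around it, use the full sum instead of only modes $\mu_k\le\kappa/\sqrt2$.

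Every mode satisfies $(1-s)^{-2}\ge\min(1,s^{-2})/4$ with $s=\mu_k^2/\kappa^2$: for $s\le2$ the left side is at least $1/4$ directly, and for $s\ge2$ we use $(s-1)\le s$. Hence
\[
\omega^4\|\bar u\|^2\ge\tfrac14\sum_{\mu_k\le\alpha\kappa}|\hat b_k|^2\alpha^{-4}
\ge\tfrac14\alpha^{-4}\bar\Phi_g(\alpha')\|b\|^2 ,
\]
where $\alpha'=\alpha L\sqrt{\bar a}^{-1}/\pi\cdot\sqrt{\bar a}$ is adjusted so that $\mu_k\le\alpha\kappa$ corresponds to $k\le\alpha'\omega$. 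Choose $\alpha$ so that $\bar\Phi_g(\alpha')\ge1/2$; this is possible since $\bar\Phi_g\to1$, and $\alpha$ depends only on $g$, $L$ and $\bar a$. This yields $\|b\|\le C_g\,\omega^2\|\bar u\|$ for all $\omega\ge1$. Combined with the $\omega$-independent ratio for the derivative term, this gives \eqref{eq:continuous-invstability2} with some $\mathcal D_g$.

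The same weighted-sum trick can also replace the $\Phi_b\ge1/8$ argument in Step 2. There, however, the thresholds must still depend on $b$, since $\|b_{xx}\|/\|b\|$ is not controlled.
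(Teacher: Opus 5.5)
Your proof is correct, and its core matches the paper's. You expand in the sine basis, bound $\omega^4\|\bar u\|_{L^2}^2$ from below by the low modes of $b$, and measure those modes with $\Phi_b$. In the scaled case you enlarge the cutoff to $k\le\alpha'\omega$ at the price of an $\alpha$-dependent constant. Your weight $(1-s)^{-2}\ge\min(1,s^{-2})$ plays exactly the role of the paper's $D_1(\gamma)=2(\gamma/\gamma_0)^2-1$.

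Where you genuinely differ is in how you treat $\kappa^{-4}\|b_{xx}\|_{L^2}^2$. The paper keeps this term inside the truncated sum, using $1+(k/\gamma\omega)^4\le 2$ there. It therefore also needs the low-frequency fraction of $b_{xx}$, via $F=\min(\Phi_b,\Phi_{b_{xx}})$. In the scaled case this forces Lemma~\ref{lem:Phiprop} to be applied to $g_{xx}$, which is why $g\in H^3$ is assumed.

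You instead bound that term outright by a multiple of $\|b\|_{L^2}^2$. For fixed $b$ it decays like $\omega^{-4}$. For $b=g((2x/L-1)\omega)$ the ratio $\kappa^{-4}\|b_{xx}\|^2/\|b\|^2=(2\sqrt{\bar a}/L)^4\|g''\|^2/\|g\|^2$ is exactly independent of $\omega$. As a result you only ever need $\Phi_b$, so your argument works for $g\in H^2$. Your $\omega_b$ also depends on $\Phi_b$ and $\|b_{xx}\|/\|b\|$ rather than on $\Phi_{b_{xx}}$. What the paper's route buys in exchange is a single unified bound $\sqrt2\, D_1(\gamma)\omega^2/(D_2(\gamma)F(\gamma\omega)^{1/2})$ that covers both cases at once.

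Two small slips need fixing:
\begin{itemize}
\item The rescaled cutoff should be $\alpha'=\alpha L/(\pi\sqrt{\bar a})$; the expression you wrote simplifies to $\alpha L/\pi$.
\item The bound $(1-s)^{-2}\ge\frac14\alpha^{-4}$ for $s\le\alpha^2$ needs $\alpha\ge 1$ (otherwise use $\min(1,\alpha^{-4})$). You can always arrange $\alpha\ge1$ by enlarging $\alpha$, since $\bar\Phi_g$ is nondecreasing.
\end{itemize}
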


\begin{proof}
Since $b\in H^2(0,L)\cap H^1_0(0,L)$ and $\bar u$ solves
\eqref{eq:hom-1D-helmholtz}, it follows that
$$
\|b\|_{H^2_\kappa}^2=
\sum_{k=1}^\infty \left(1 + \bar{a}^2\left(\frac{k\pi}{\omega
L}\right)^4\right) \hat{b}_k^2,
\qquad
\hat{u}_k = \frac{\hat{b}_k}{\omega^2-\bar{a}\left(\frac{k\pi}{L}\right)^2}.
$$
Let $\gamma \geq  \gamma_0:=\frac{L\sqrt{2}}{\pi\sqrt{\bar a}}$.
We then have
\begin{align*}
\|\bar{u}\|_{L^2}^2 &\geq
\sum_{1\leq k\leq \gamma\omega} \hat{u}_k^2=
\sum_{1\leq k\leq \gamma\omega}
\frac{\hat{b}_k^2}{\left(\omega^2-{2k^2}/{\gamma_0^2}\right)^2}
\\
&\geq 
\min_{1\leq k\leq \gamma\omega}
\frac{1}{\left(\omega^2-{2k^2}/{\gamma_0^2}\right)^2}
\sum_{1\leq k\leq \gamma\omega} \hat{b}_k^2
\geq \frac{1}{D_1(\gamma)^2\omega^4}
\sum_{1\leq k\leq \gamma\omega} \hat{b}_k^2,
\end{align*}
where $D_1(\gamma) =
2(\gamma/\gamma_0)^2-1\geq 1$. Furthermore,
using the
definition of $\Phi_b$ in \eqref{eq:Phidef}, 
\begin{align*}
\sum_{1\leq k\leq \gamma\omega} \hat{b}_k^2
&\geq
\frac{1}{2}\sum_{1\leq k\leq \gamma\omega} \hat{b}_k^2
\left(1 + \left(\frac{k}{\gamma\omega}\right)^4\right)
\geq
\frac12\min\left(1,\left(\frac{L}{\gamma\pi\sqrt{\bar{a}}}\right)^4\right)
\\
&\qquad\times 
\sum_{1\leq k\leq \gamma\omega} 
\left(1 + \bar{a}^2\left(\frac{k\pi}{\omega L}\right)^4\right)\hat{b}_k^2\\
&=
\frac{D_2(\gamma)^2}{2}\left(
  \Phi_b(\gamma\omega)
  \|b\|_{L^2}^2+
  \Phi_{b_{xx}}(\gamma\omega)\frac{\bar{a}^2}{\omega^4}
\|b_{xx}\|_{L^2}^2\right)
\geq
\frac12 D_2(\gamma)^2F(\gamma\omega)
\,\|b\|_{H^2_\kappa}^2,
\end{align*}
where
{\scriptsize
\[
  D_2(\gamma) =
  \min\left(1,\left(\frac{L}{\gamma\pi\sqrt{\bar{a}}}\right)^2\right)
  = \tfrac{1}{2}\left(\frac{\gamma_0}{\gamma}\right)^2\leq \frac12,
\]
and
\[
  F(\omega) = \min\left(\Phi_b(\omega),\Phi_{b_{xx}}(\omega)\right).
\]
}
In conclusion,
$$
\|b\|_{H^2_\kappa} \leq
\sqrt{2}\frac{D_1(\gamma)\omega^2}{D_2(\gamma)F(\gamma\omega)^{1/2}}
\|\bar{u}\|_{L^2}.
$$
We note that since $\Phi_b(\omega)$ and $\Phi_{b_{xx}}(\omega)$ are
nondecreasing and converge to one when $\omega\to\infty$ by
Lemma~\ref{lem:Phiprop}, the same is true for $F$. We can therefore let
$\omega_b$ be such that $F(\gamma\omega)\geq 1/2$ when $\omega\geq \omega_b$
and $\gamma=\gamma_0$. Then $D_1(\gamma)=1$ and $D_2(\gamma)=1/2$.
This  proves \eqref{eq:continuous-invstability}. Observe that
$\omega_b$ depends on $b$ since $F$ depends on $b$.

In the second case, when $b(x)=g((2x/L-1)\omega)$, we note first that
the assumptions on $g$, together with
$\omega\geq1$,
again ensure that $b\in H^2(0,L)\cap H^1_0(0,L)$. Moreover,
$(L/2\omega)^2 b_{xx}(x)=g_{xx}((2x/L-1)\omega)$.
Lemma~\ref{lem:Phiprop} then guarantees the existence of $\bar\Phi_g$
and $\bar\Phi_{g_{xx}}$ such that
\begin{equation*}
\begin{aligned}
  F(\gamma\omega)
  &=\min\left(\Phi_b(\gamma\omega),\Phi_{b_{xx}}(\gamma\omega)\right) \\
  &=\min\left(\Phi_b(\gamma\omega),\Phi_{(L/2\omega)^2b_{xx}}(\gamma\omega)\right)
  \\
  &\geq \min\left(\bar\Phi_g(\gamma),\bar\Phi_{g_{xx}}(\gamma)\right).
\end{aligned}
\end{equation*}
where we also used the scale invariance of $\Phi_{b_{xx}}$. Moreover,
$\bar\Phi_g$ and $\bar\Phi_{g_{xx}}$ are both nondecreasing and
converge to one, so we can choose $\gamma_g\geq \gamma_0$ such that
\[
\min(\bar{\Phi}_g(\gamma_g),\bar{\Phi}_{g_{xx}}(\gamma_g))\geq 1/2.
\]
Then,
\[
F(\gamma_g\omega)\geq 1/2 \qquad \forall \omega\geq 1.
\]
This gives us
\eqref{eq:continuous-invstability2}
with ${\mathcal D}_g=2D_1(\gamma_g)/D_2(\gamma_g)$, which depends on
$g$ only via $\gamma_g$.
\end{proof}

\begin{remark}\label{remark:counter-ex}
With the choice $b(x)=\sin(k\pi x/L)$, we have
\[
\bar{u}(x) = \frac{\sin(k\pi x/L)}{\omega^2-\bar{a}(k\pi/L)^2},
\qquad
\|b\|_{H_\kappa^2} = {\mathcal D}_0\omega^2\|\bar{u}\|_{L^2},
\]
where
\[
{\mathcal D}_0={\mathcal D}_0(k\pi/\omega L)=|1-\bar{a}(k\pi/\omega
L)^2|\left(1 + \bar{a}^2\left(\frac{k\pi}{\omega L}\right)^4\right)^{1/2}.
\]
Hence, for fixed $\omega$ the value ${\mathcal D}_0$ grows with $k$.
This shows that there can be no inverse stability estimate where both
${\mathcal D}_0$ and $\omega_b$ are independent of $b$. In general,
${\mathcal D}_0$ depends on the fraction of the total energy of $b$
that is contained in the first $\omega_b$ frequencies.
\end{remark}

\section{Numerical Experiments in One Dimension} \label{sec:experiments1D}
In this section we consider the model problem:
\begin{subequations} \label{eq:model-1D}
\begin{gather}
\left(a_\varepsilon(x) u_x\right)_x + \omega^2 u = b(x), \qquad x\in (0,1), \\
u(0) = u(1) = 0.
\end{gather}
\end{subequations}
Here, the forcing function $b$ will be taken to be an approximate
point source centered at $x_0 = 0.3$ given by
\[ b(x) = \tfrac{s}{\sqrt{\pi}}e^{-s^2(x-x_0)^2}, \qquad s = 20. \]
In Sections \ref{sec:P1} and \ref{sec:P2} we consider coefficients
$a_\varepsilon$ for which the homogenized coefficient is known,
starting with a simple periodic coefficient for which Theorem
\ref{thm:main_theorem} directly applies.
Throughout Sections \ref{sec:experiments1D} and \ref{sec:experiments2D}, the WaveHoltz fixed-point iteration is accelerated by the conjugate gradient method \cite{Saad-IterativeMethods} to a tolerance of $10^{-12}$ so that effectively no error from the iteration is introduced and the computed solution $\vec{v}_h$ is truly the fixed point of WaveHoltz.
We will compare the results
of WHMM with a highly resolved ($M_x = 2000$) finite difference
solution using the exact homogenized coefficient, and refer to this
solution as the ``exact'' solution $\bar{\vec{u}}$.
In addition, in
all three examples we will compare against a direct numerical
simulation (DNS) of \eqref{eq:model-1D} computed at high resolution.
We denote by $\vec{u}_\varepsilon$ the DNS solution, and take
\[ M_{x, \rm DNS} = \max\{ 2000, \lceil 20/\varepsilon \rceil \}. \]
The relative estimates in Theorem \ref{thm:main_theorem} state that
\[
\frac{\|\vec{v}_h -
\bar{\vec{u}}\|_{H}}{\|\bar{\vec{u}}\|_{H}} = O(H^2 \omega^3/\delta) + O(\omega(\varepsilon/\eta)^2/\delta).
\]
The homogenization ansatz also suggests
\(
\|\bar{u} - u_{\varepsilon}\|_{L^2}/\|u_\varepsilon\|_{L^2} = O(\varepsilon),
\)
so,
\[ \frac{\|I_H \vec{v}_h -
\vec{u}_\varepsilon\|_{L^2}}{\|\vec{u}_\varepsilon\|_{L^2}} = O(\varepsilon)
+ O(H^2 \omega^3/\delta)
+O(\omega(\varepsilon/\eta)^2/\delta). \]
As before $I_H : \R^{M_x} \to H^1(0,L)$ is the piecewise linear interpolation operator on the coarse grid. The $L^2$-norm is approximated by quadrature on the fine DNS grid. 

Each one dimensional numerical experiment is organized as follows: first, we show that the approximation error $|\bar{a} - \alpha|$ controlling the $O((\varepsilon/\eta)^q)$ term can be made small,
and that the $O(H^2)$ approximation error dominates for fixed $\omega$ and $\delta$. We see also that this is not the case if a simple average is used in place of the high-order local averaging kernels. Finally, we allow both $\omega$ and $\varepsilon$ to vary but fix the quantity $H^2\omega^3$ which demonstrates how the approximation error scales with $1/\delta$ and $\varepsilon$.
When fixing the quantity $H^2\omega^3$, we select $H=1/M_x$ according to
\begin{equation} \label{eq:ppw}
M_x = \max\{ 64, C \omega^{\frac{3}{2}} \}.
\end{equation}
Here, we take $C = 4$ which approximately resolves the solution with at
least ten points per wavelength for all values of $\omega$
considered ensuring the approximation error is small clearly showing the $O(\varepsilon)$ homogenization error.

\subsection{Local Averaging Kernels} \label{sec:num-kernel}
Recall from Section \ref{sec:micro-solver} that the flux
$F_{j+\frac{1}{2}}$ is computed as an average of the flux of the
micro problem solution weighted by the local averaging kernel
$\mathcal{K}$ belonging to a family of kernels $\mathbb{K}^{p,q}$. A
kernel $\mathcal{K}:[-1,1]\to\R$ belonging to $\mathbb{K}^{p,q}$ has
$q$ compactly supported derivatives, and it exactly reproduces the
mean of any polynomial of degree at most $p$; that is, the integral
of such a polynomial weighted by $\mathcal{K}$ over $[-1,1]$
evaluates to the average value of the polynomial on that interval.
From these two properties, the order parameters $p,q$ determine the
quality of the approximation. Namely, if
$a_\varepsilon(x)\frac{\partial w}{\partial x}$ is periodic then we
have  that
\[
\bar{a}\frac{\partial\bar{w}}{\partial x} = \frac{1}{\eta\tau}
\int_{-\tau}^{\tau}\int_{-\eta}^{\eta}
\mathcal{K}(x/\eta)\mathcal{K}(t/\tau)
a_\varepsilon(X_{j+\frac{1}{2}}+x) \frac{\partial w}{\partial x} \, dx\, dt
+ O\left( (\varepsilon/\eta)^q \right),
\]
for any $p,q \geq 0$. However, when the integral is approximated by
quadrature, a standard result in approximation states that the
integral is approximated to order $q$ (independently of $p$) because
the integrand has $q$ continuous compactly supported derivatives.
Thus for purely periodic problems it is sufficient to choose $p = 1$
and $q$ to be high order to yield an accurate approximation. When
$a_\varepsilon$ has $p$ continuous derivatives in the slow variable
$x$ and periodic in $x/\varepsilon$, then
$\bar{a}\frac{\partial\bar{w}}{\partial x}$ has $p$ continuous
derivatives. The kernel $\mathcal{K}$ exactly averages all
polynomials of degree $p$ or less, hence the approximation error
gains a term of order $p$ in addition to the term of order $q$. For
these problems we typically choose both $p$ and $q$ to be high order.

In the numerical experiments $K^{(p,q)}$ is the unique polynomial of
minimum degree polynomial in $\mathbb{K}^{p,q}$ computed using the
procedure in Appendix \ref{appendix:kernel}.

\input{experiments/Periodic1D/periodic1d}
\input{experiments/Smooth1D/smooth1d}

\section{Numerical Experiments in Two Dimensions} \label{sec:experiments2D}
In this section we consider the model problem in $\Omega = (0,1)^2$:
\begin{subequations}\label{eq:model-2D}
\begin{align}
\nabla\cdot\left( a_\varepsilon(x) \nabla u \right) + \omega^2 u &=
b(x), &x&\in\Omega, \\
u(x) &= 0, &x&\in\partial\Omega.
\end{align}
\end{subequations}
Here, the forcing function $b$ will be taken to be an approximate
point source centered at $x_0 = (0.3, 0.4)$ given by:
\[ b(x) = \tfrac{s^2}{\pi} e^{-s^2 \|x - x_0\|^2}, \quad s = 20. \]
For both problems in two dimensions, the exact homogenized
coefficient is known. We compare the solution produced by WHMM to a
well resolved finite difference solution ($M_x = M_y =1000$) using
the exact coefficient which we refer to as the exact solution
$\bar{\vec{u}}$. In both examples, we also visualizes the solution to
\eqref{eq:model-2D} computed by direct numerical simulation for a
moderate $\varepsilon$. The experiments in two dimension match the
one dimensional results. Notably, we observe $O(H^2)$ convergence and accurate approximation of the homogenized coefficient $\bar{a}$.

\input{experiments/Periodic2D/periodic2d}

\input{experiments/Smooth2D/smooth2d}

\appendix
\input{Appendices/LocalAverageKernel}

\bibliographystyle{plain}
\bibliography{Bibliography/Bibliography}

\end{document}

%% file: experiments/Periodic1D/periodic1d.tex
\subsection{1D Periodic Coefficient} \label{sec:P1}
Consider problem \eqref{eq:model-1D} with coefficient
\[ a_\varepsilon(x) = \frac{11}{10} + \sin\frac{2\pi x}{\varepsilon}. \]
The homogenized coefficient for this problem is known to be the constant
\[
  \bar{a} = \sqrt{21/100}.
\]
We first illustrate that the kernels described in Section \ref{sec:micro-solver} can be used to  find accurate approximations to this value. Here, since the coefficient is periodic in $x/\varepsilon$, we use the $K^{(1, 7)}$ kernel based on the discussion in Section \ref{sec:num-kernel}. Additionally, we take $\eta = \tau = 4\varepsilon$ and $h = \varepsilon/32$ for the micro-problem (see Section \ref{sec:micro-solver}). 
Since the coefficient is periodic and by choice of $\eta, \tau,$ and $h$, the micro-problem is independent of $\varepsilon$, so we take $\varepsilon = 1$ when computing $\alpha$ offline as discussed in Section \ref{sec:offline-coefficient}.
We observe
\[
  \frac{|\alpha - \bar{a}|}{\bar{a}} \sim 1.31 \times 10^{-5}.
\]
On the other hand, if we compute $\alpha$ as a simple average (i.e. use $K^{(1,-1)}$), the relative error is $6.9\times 10^{-2}$.
We now illustrate how the accuracy in $\alpha$ affects the WHMM solution. First consider the case when it is computed very accurately. Then because the coefficient is periodic in $x / \varepsilon$, Theorem \ref{thm:main_theorem} applies directly to this problem, and we expect the relative error between the exact homogenized solution $\bar{\vec{u}}$ and the WHMM solution $\vec{v}_h$ to be $O(H^2\omega^3 / \delta)$,
or $O(H^2)$ for fixed $\omega$ and $\delta$. In the left panel of Figure~\ref{fig:P1_error_v_H}, we plot the relative error $\|\bar{\vec{u}} - \vec{v}_h\|_H / \|\bar{\vec{u}}\|_H$ for 50 logarithmically spaced values of $H$ between $10^{-3}$ and $10^{-2}$, the second-order convergence rate is clearly visible.
Now consider the case when $\alpha$ is approximated using a simple average, then repeating the error computation for $H$ between $10^{-3}$ and $10^{-2}$ results in the errors observed in the right panel of Figure \ref{fig:P1_error_v_H}. As can be seen, the poor approximation of the homogenized coefficient as a simple average of the flux completely destroys the convergence rate.

\begin{figure}[hbt]
  \centering
  \includegraphics[height=1.5in]{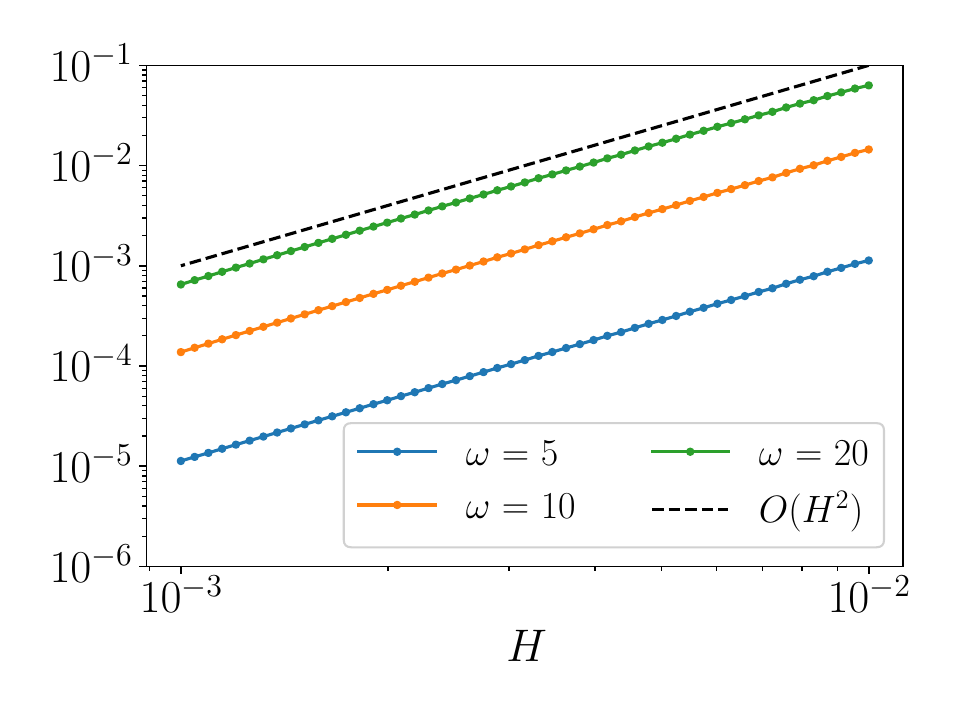}
  \includegraphics[height=1.5in]{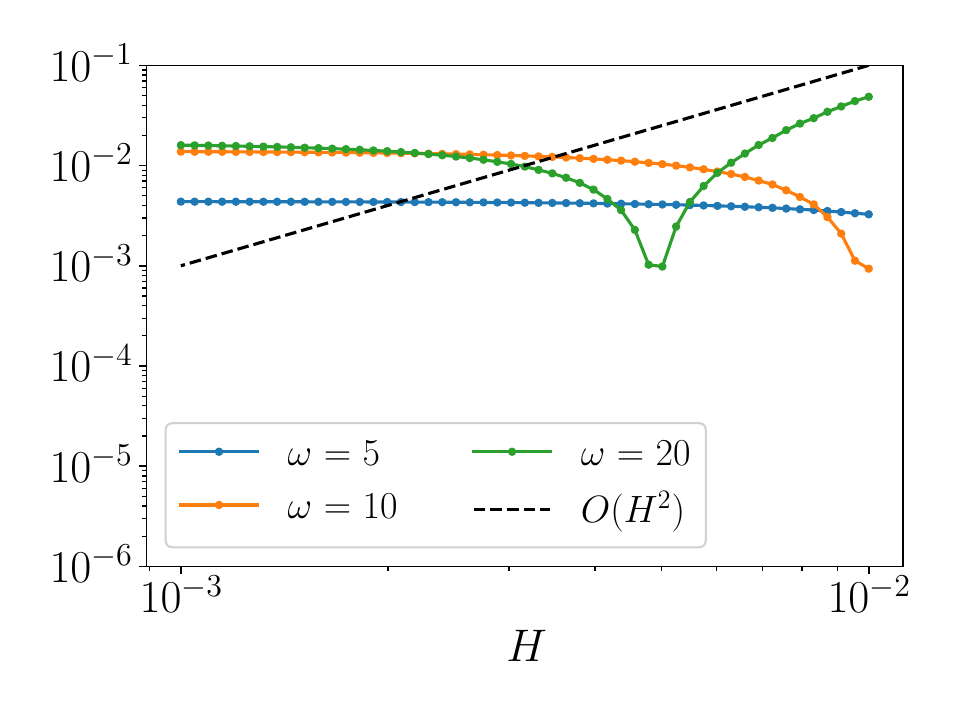}
  \caption{Displayed is the relative error between the WHMM solution and the exact homogenized ($y$-axis) solution as a function of the macro grid size $H$ for various frequencies. The second order rate of convergence is clearly displayed on the left where the $\bar{a}$ is approximated accurately, and no convergence on the right where $\bar{a}$ is approximated poorly.}
  \label{fig:P1_error_v_H}
\end{figure}

\begin{figure}[hbt]
  \centering
  \includegraphics[height=1.5in]{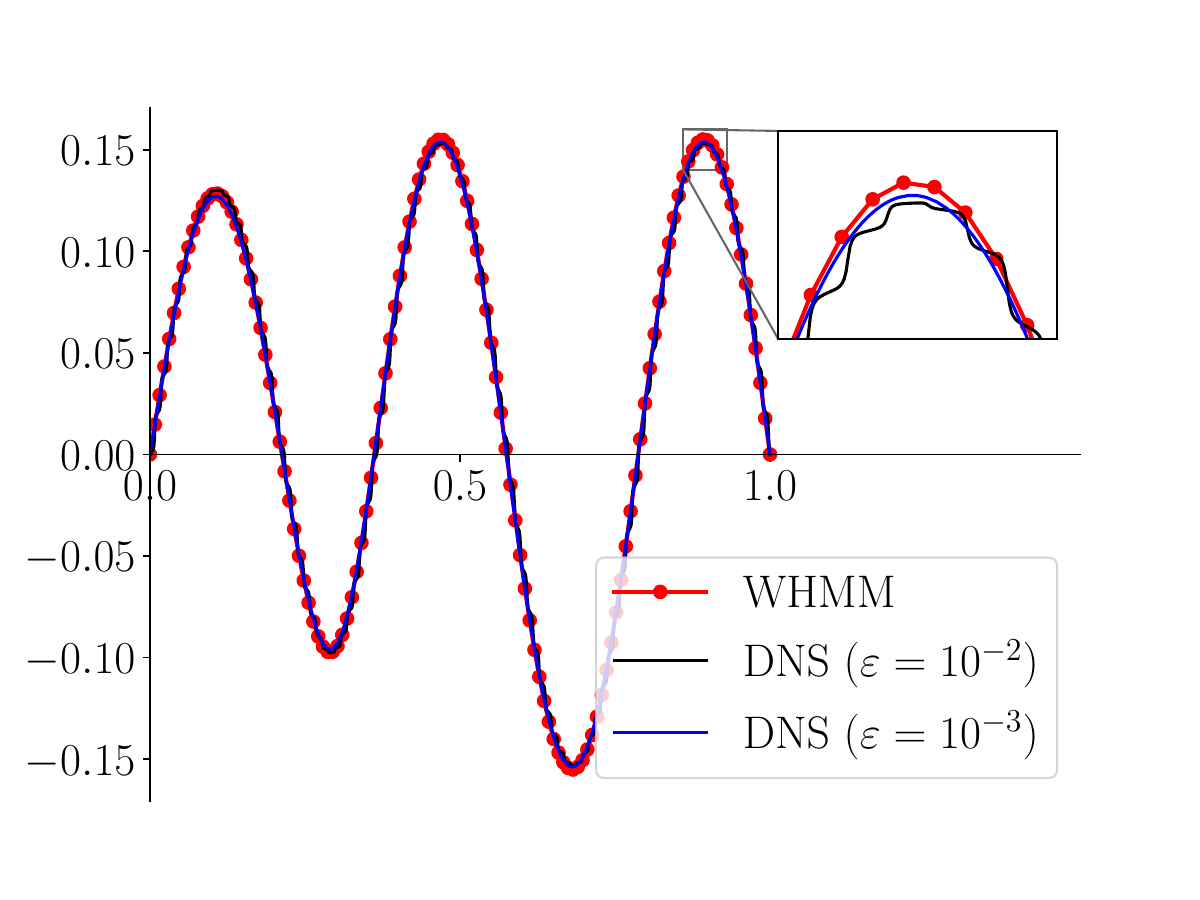}
  \includegraphics[height=1.5in]{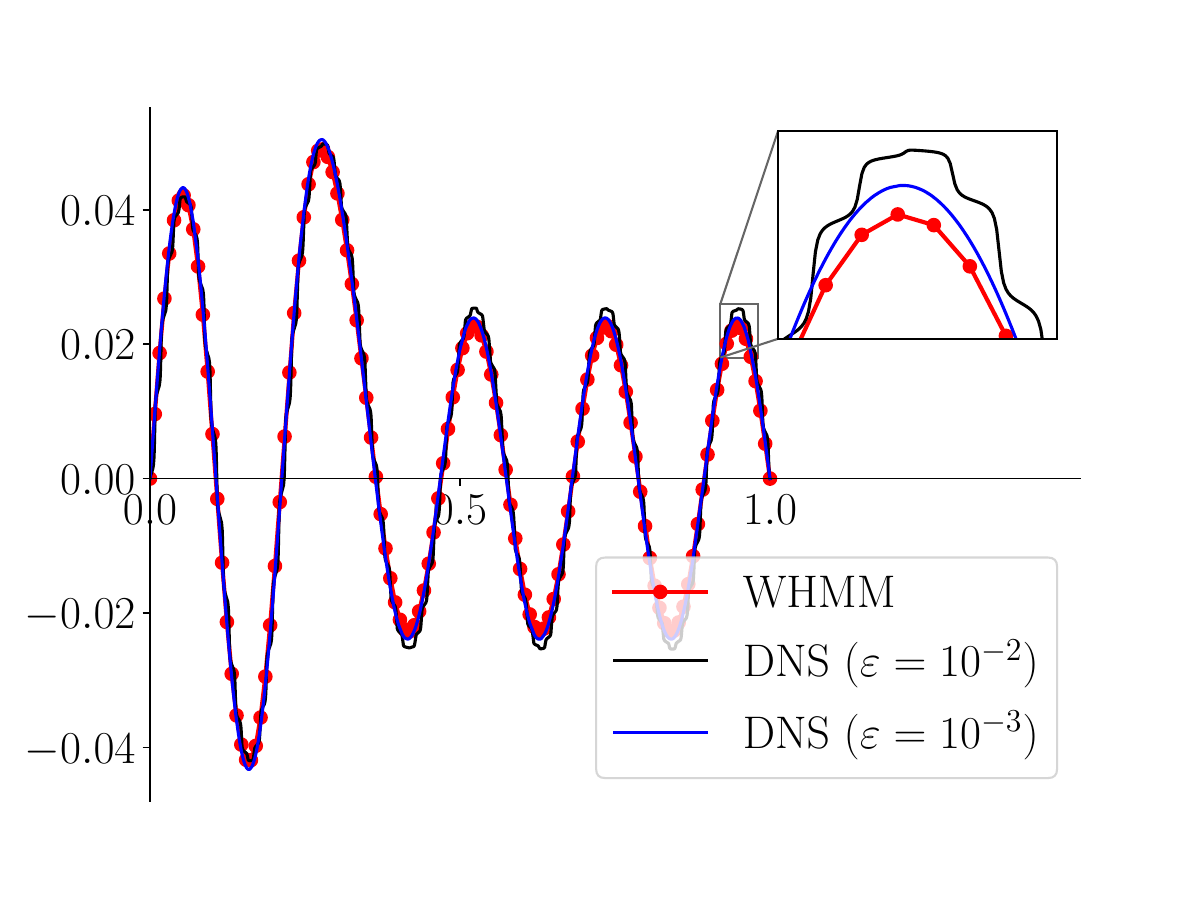}
  \caption{The DNS solution and the WHMM solution to the problem described in Section \ref{sec:P1} with $\omega = 10$ (left) and $\omega = 20$ (right) for $\varepsilon = 10^{-2}$ and $\varepsilon = 10^{-3}$.}
  \label{fig:P1_solution}
\end{figure}

Next, we compare the WHMM solution $\vec{v}_h$  to the DNS solution $\vec{u}_\varepsilon$ and the exact solution $\bar{\vec{u}}$.
In this experiment we use the $K^{(1,7)}$ kernel so that $\alpha$ is accurate enough to ensure that the WHMM solver produces a $\bar{\vec{v}}_h$
that is essentially identical to the solution to the homogenized problem $\bar{\vec{u}}_h$ on the same grid.

Two example DNS solutions with $\omega = 10$ and $\omega = 20$ for the moderate $\varepsilon = 10^{-2}$ are plotted in Figure \ref{fig:P1_solution} along corresponding homogenized solutions (computed with WHMM) at a fixed $M_x = 128$. The relative difference
$\|I_H\vec{v}_h - \vec{u}_\varepsilon\|_{L^2} / \|\vec{u}_\varepsilon\|_{L^2}$ are 0.060 and  0.108 for $\omega = 10$ and 20, respectively. We then repeat the computations with $\varepsilon = 10^{-3}$. The results are plotted in the same figure. The relative errors are now 0.010 and 0.031, for $\omega = 10$ and 20, respectively.

To the left in Figure \ref{fig:P1_error_v_omega}, we plot the relative errors $\|I_H\vec{v}_h - \vec{u}_\varepsilon\|_{L^2} / \|\vec{u}_\varepsilon\|_{L^2}$ and $\|\vec{v}_h - \bar{\vec{u}}\|_H / \|\bar{\vec{u}}\|_H$ as a function of $\omega$. Here we take $M_x$ according to \eqref{eq:ppw}. 
We would then expect
a relative error of size $O(\varepsilon) + O(1/\delta)$,
which corresponds to the
homogenization error and a fixed
discretization error.
We see that the error indeed appears to spike at certain values of $\omega$. These values of $\omega$ are close to eigenvalues of the elliptic operator $\frac{d}{dx}(\bar{a} \frac{d}{dx})$ implying $\delta$, the relative distance to resonance, is very small. When $\omega$ is exactly an eigenvalue, the Helmholtz problem is singular, so these spikes in the error reflect the singularity and are expected. Note also that we have marked errors for the solutions displayed in Figure \ref{fig:P1_solution} by squares
in Figure \ref{fig:P1_error_v_omega}, left.

To the right in Figure \ref{fig:P1_error_v_omega} we plot the relative errors as a function of $\varepsilon$ for fixed $\omega$ and $\delta$.
Here we see that for larger values of $\varepsilon$ the error is dominated by the $O(\varepsilon)$ homogenization error, and for very small values of $\varepsilon$ the error is dominated by the $O(1/\delta)$ discretization error. The colored dashed lines represent the error between $\vec{v}_h$ and $\bar{\vec{u}}$ which explains the plateau observed in the errors, as $\vec{v}_h \to \bar{\vec{u}}$ rather than $\vec{u}_{\varepsilon}$, further validating the asymptotic behavior of the method.

\begin{figure}[htb]
  \centering
  \includegraphics[height=1.5in]{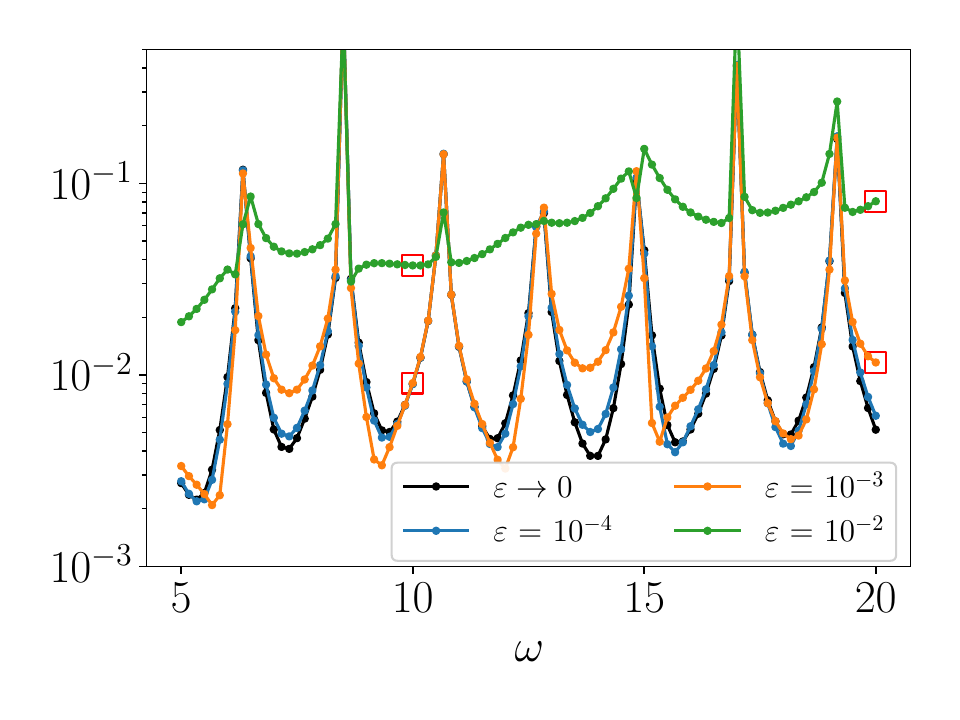}
  \includegraphics[height=1.5in]{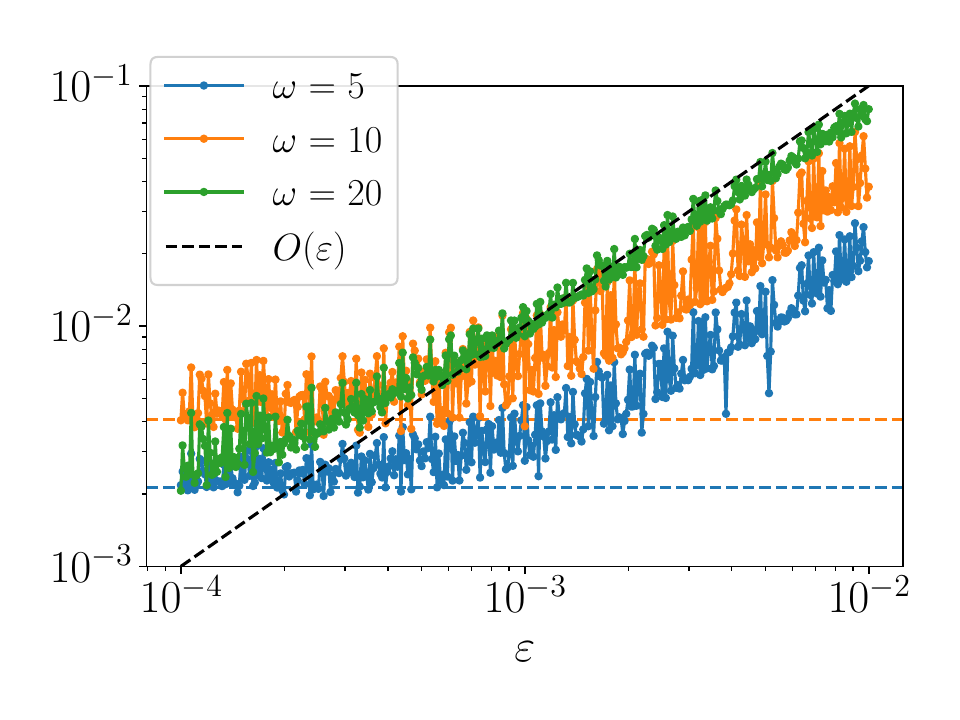}
  \caption{
    (left) The relative error between the WHMM solution and the DNS solution as a function of $\omega$ for various epsilon as well as the exact homogenized solution $\bar{\vec{u}}$ (represented in the graph by $\varepsilon \to 0$).
    (right) The relative error between the WHMM solution and the DNS solution as a function of $\varepsilon$ for various $\omega$. The horizontal dashed lines represent the relative error between WHMM and the exact homogenized solution. Note that there is no correlation between the colors used in the figures.
  }
  \label{fig:P1_error_v_omega}
\end{figure}

%% file: experiments/Smooth1D/smooth1d.tex
\subsection{1D Locally Periodic Coefficient}\label{sec:P2}

Consider problem \eqref{eq:model-1D} with coefficient
\[ a_\varepsilon(x) = \frac{11}{10} + \frac{1}{2}\sin 2\pi x + \frac{1}{2}\sin\frac{2\pi x}{\varepsilon}. \]
For this problem, the homogenized coefficient is known and is non-constant:
\[ \bar{a}(x) = \sqrt{\left( \frac{11}{10} + \frac{1}{2}\sin 2\pi x \right)^2 - \frac{1}{4}}. \]
Here, the coefficient is periodic in $x/\varepsilon$ but also depends on $x$, so for this problem we use the $K^{(5, 7)}$ kernel and take $\eta = \tau = 8\varepsilon$ and $h = \varepsilon / 32$ for the micro-problem. With this choice, the maximum error observed across all $\varepsilon$ ranging from $10^{-4}$ to $10^{-2}$ was
\[
  \max_{\varepsilon \in[10^{-4},10^{-2}]}\frac{\max_j |\alpha_j - \bar{a}_j|}{\max_j \bar{a}_j} \sim 5.6 \times 10^{-6}.
\]
The homogenized coefficient $\alpha$ computed by HMM as well as the pointwise error are plotted in Figure \ref{fig:P2_hom_coeff}.
Two example solutions are plotted in Figure \ref{fig:P2_solution}, the relative errors $\|I_H \vec{v}_h - \vec{u}_\varepsilon\|_{L^2}/\|\vec{u}_\varepsilon\|_{L^2}$ 
are $0.016$ and $0.028$ for $\omega = 10$ and $20$, respectively.

\begin{figure}[h]
  \centering
  \includegraphics[height=1.5in]{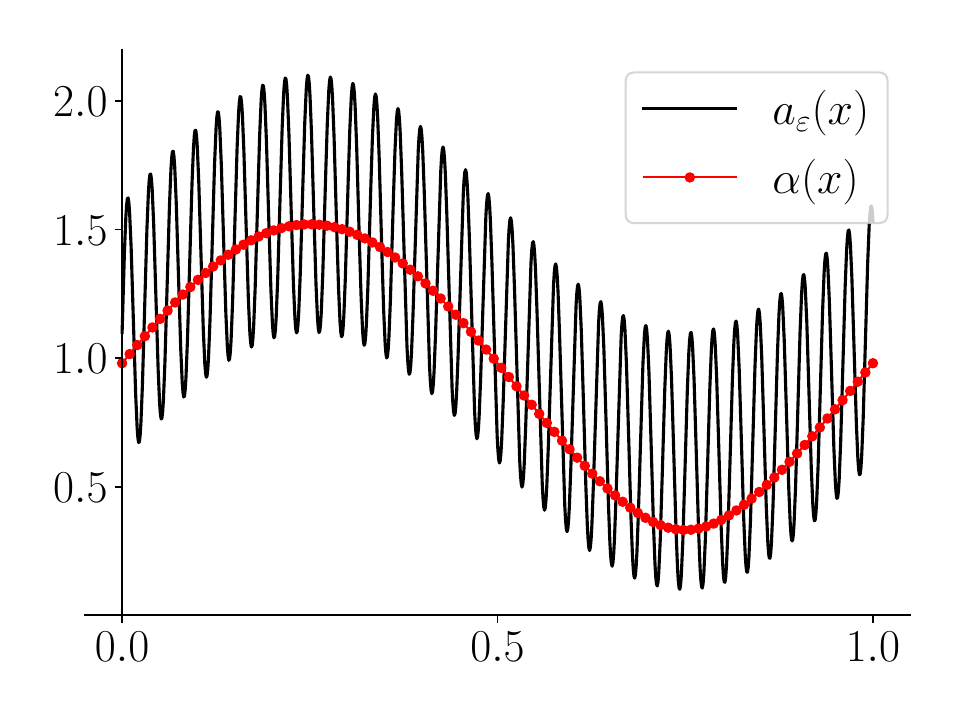}
  \includegraphics[height=1.5in]{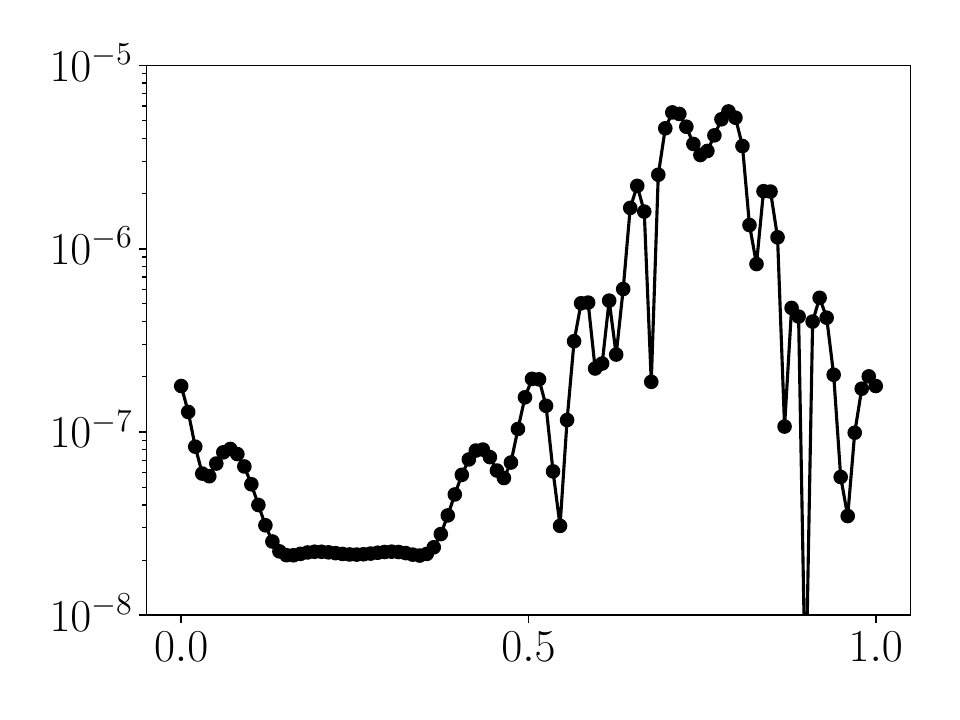}
  \caption{
    (left) The high frequency coefficient $a_\varepsilon(x)$ ($\varepsilon = 0.03$) and the homogenized coefficient computed by HMM.
    (right) The relative error $|\bar{a} - \alpha| / \max|\bar{a}|.$
  }
  \label{fig:P2_hom_coeff}
\end{figure}

\begin{figure}[h]
  \centering
  \includegraphics[height=1.5in]{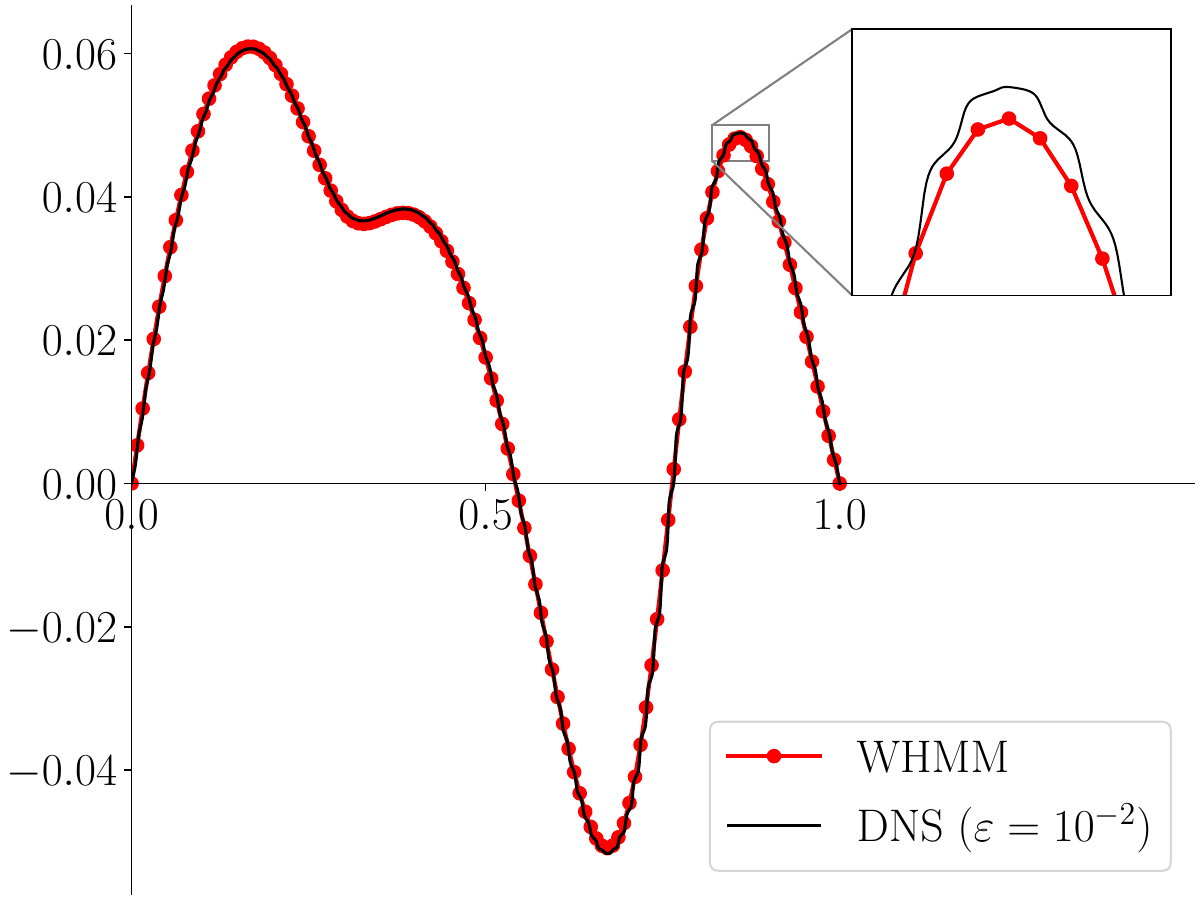}
  \includegraphics[height=1.5in]{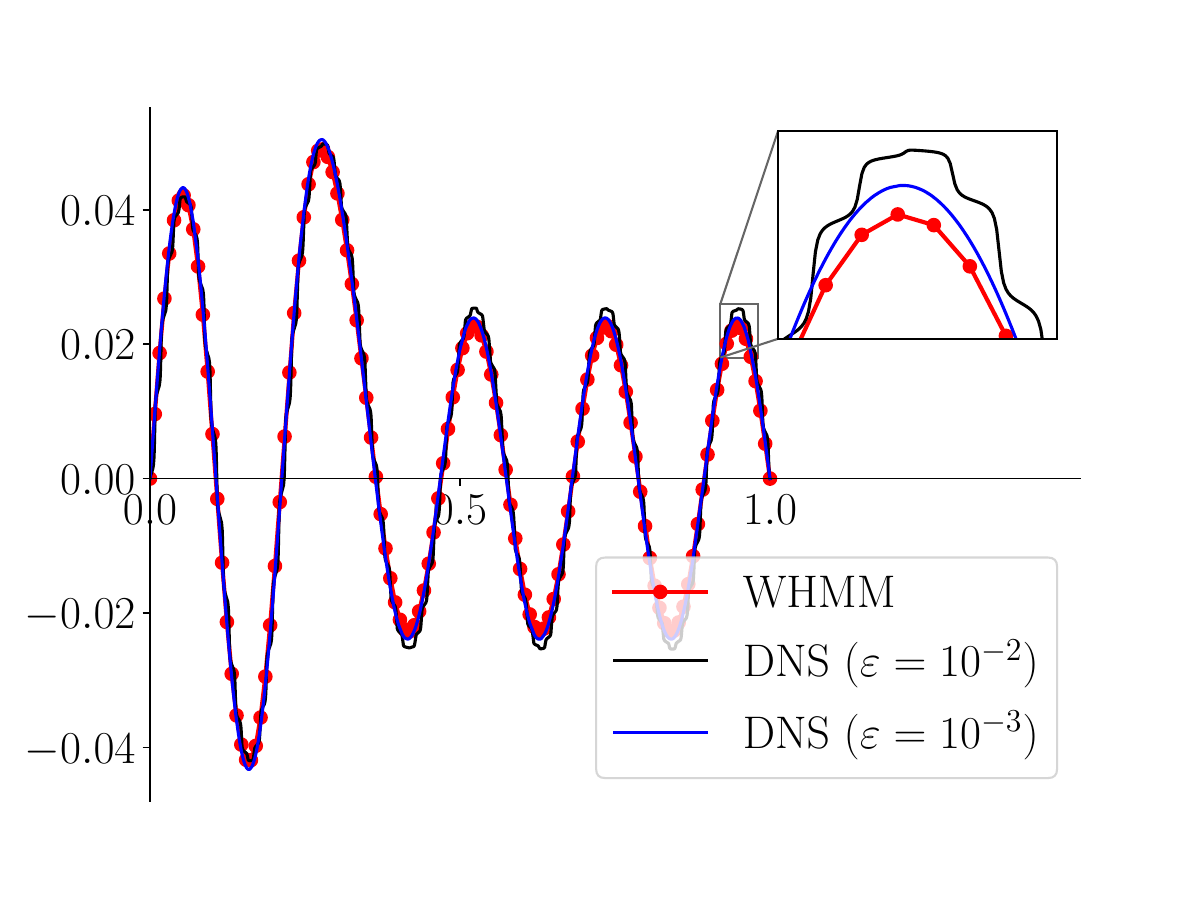}
  \caption{The DNS solution with $\varepsilon = 10^{-2}$ and the WHMM solution to the problem in Section \ref{sec:P2} with $\omega = 10$ and $\omega = 20$, respectively.}
  \label{fig:P2_solution}
\end{figure}

For this problem, the assumptions of Theorem \ref{thm:main_theorem} do not apply. Nevertheless, previous results for the HMM method for the time dependent wave equation suggest HMM generalizes well to problem with locally periodic coefficients. We repeat the experiment in Section \ref{sec:P1} for this coefficient.
In the left panel of Figure \ref{fig:P2_error_v_omega} we once again observe an $O(H^2)$ convergence of the WHMM solution to the exact homogenized solution.
In the middle and right panels of Figure \ref{fig:P2_error_v_omega}, we see the same pattern as in \ref{sec:P1}. The error spikes for discrete values of $\omega$ where $\delta \to 0$ and the problem is ill-posed. The error between $\mathbf{v}_h$ and $\mathbf{u}_\varepsilon$ is roughly $O(\varepsilon)$ for larger values of $\varepsilon$ and $O(1/\delta)$ for smaller values. Note the red boxes marked in the middle panel of Figure \ref{fig:P2_error_v_omega} correspond to the solutions in Figure \ref{fig:P2_solution}.


\begin{figure}[h]
  \centering
  \includegraphics[height=1.25in]{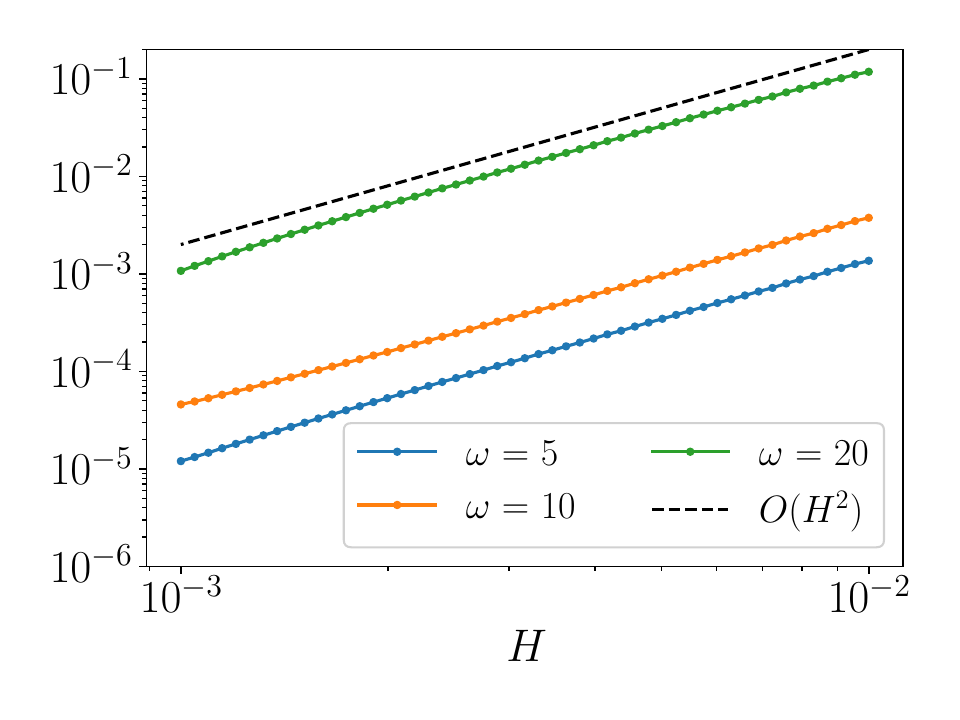}
  \includegraphics[height=1.25in]{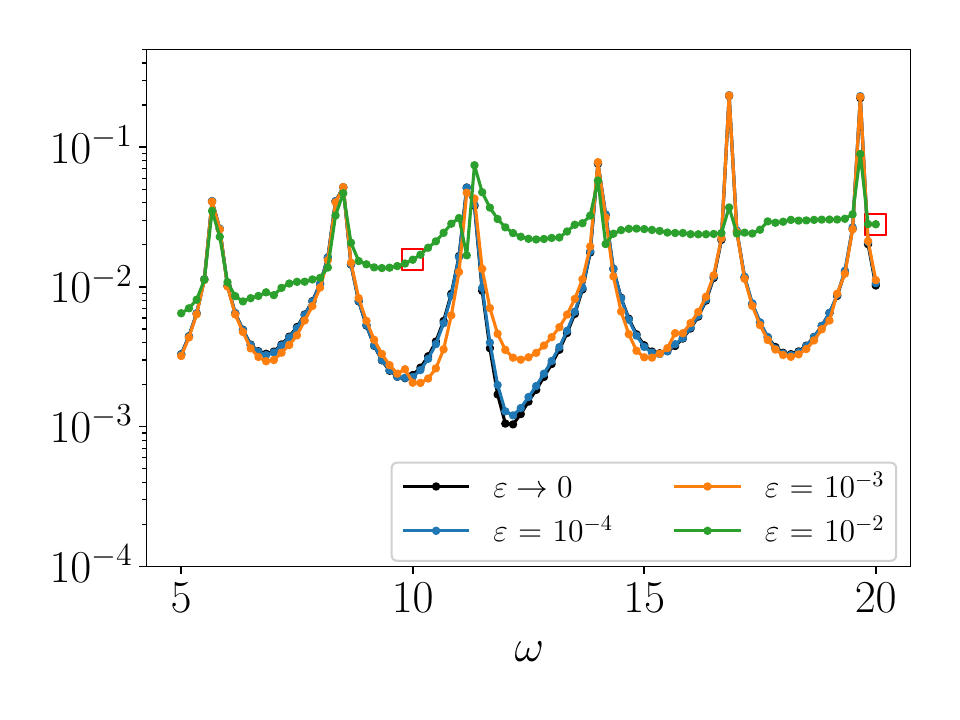}
  \includegraphics[height=1.25in]{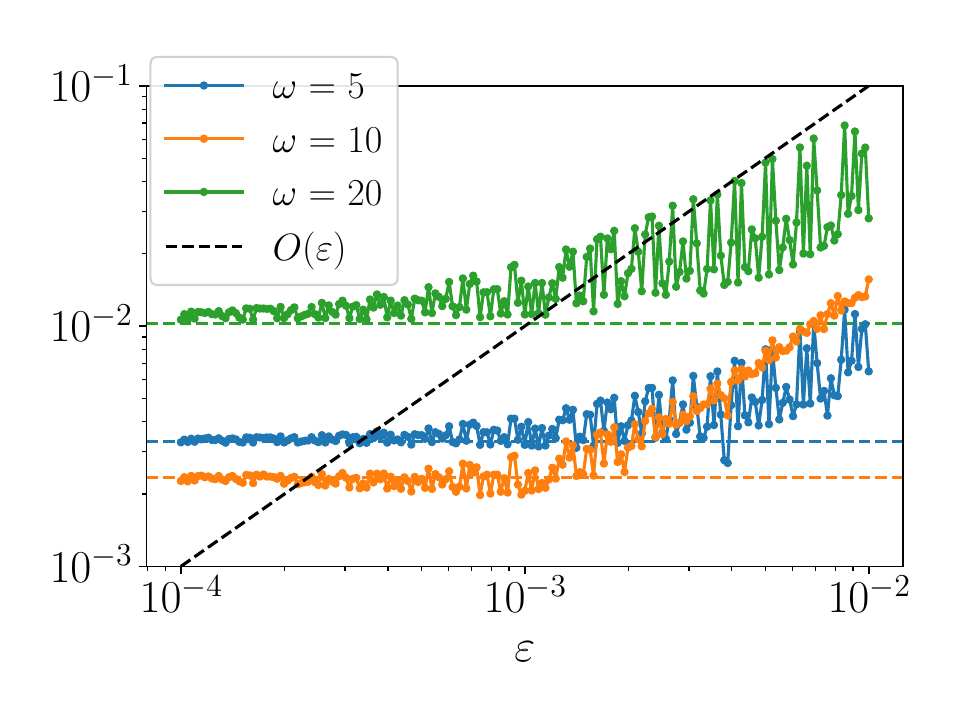}
  \caption{
    (left) The relative error between the WHMM solution and the exact homogenized solution as a function of $H$ for various frequencies. (center) The relative error between $\mathbf{v}_h$ and $\mathbf{u}_\varepsilon$ as a function of $\omega$ for various $\varepsilon$ as well as the exact solution $\bar{u}$ ($\varepsilon \to 0$).
    (right) The relative error between $\mathbf{v}_h$ and $\mathbf{u}_\varepsilon$ as a function of $\varepsilon$ for various $\omega$. The horizontal dashed lines represent the relative error between $\mathbf{v}_h$ and $\vec{\bar{u}}$.
  }
  \label{fig:P2_error_v_omega}
\end{figure}

%% file: experiments/Periodic2D/periodic2d.tex
\subsection{2D Periodic Coefficient} \label{sec:P4}
Consider problem \eqref{eq:model-2D} with coefficient
\[ a_\varepsilon(x) = \frac{11}{10} + \sin 2\pi \frac{x_1}{\varepsilon}. \]
For this problem, the homogenized coefficient is known to be the constant matrix:
\[ \bar{a} =
  \begin{pmatrix}
    \sqrt{0.21} & 0 \\
    0 & 1.1
\end{pmatrix}. \]
Two example solutions are plotted in Figure \ref{fig:P4_solution}. Since the coefficient is periodic, we use the $K^{(1,7)}$ kernel and take $\eta = \tau = 8\varepsilon$ with $h = \varepsilon / 32$ for the micro-problem. With this choice, we observe
\[ \frac{\|\alpha - \bar{a}\|_F}{\|\bar{a}\|_F} \sim 4.9 \times 10^{-7}. \]
In Table \ref{table:p4} we report the relative error of the WHMM solution $\vec{v}_h$ compared to the exact homogenized solution $\bar{\vec{u}}$ for various values of $\omega$ and various grid sizes. We roughly observe the predicted $O(H^2)$ convergence which was proven for one dimensional problems, but not in two dimensions.

\begin{figure}
  \centering
  \includegraphics[height=1.4in]{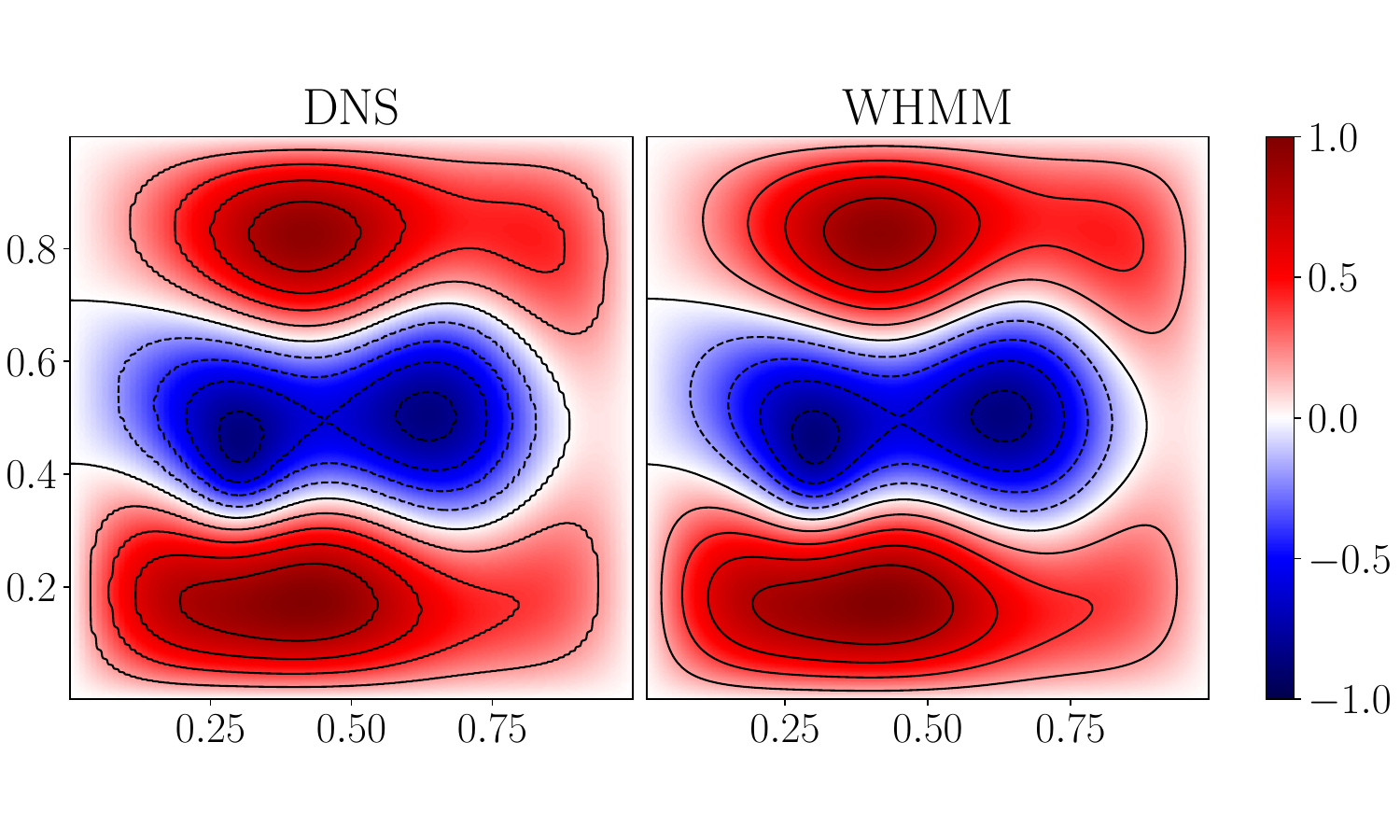}
  \includegraphics[height=1.4in]{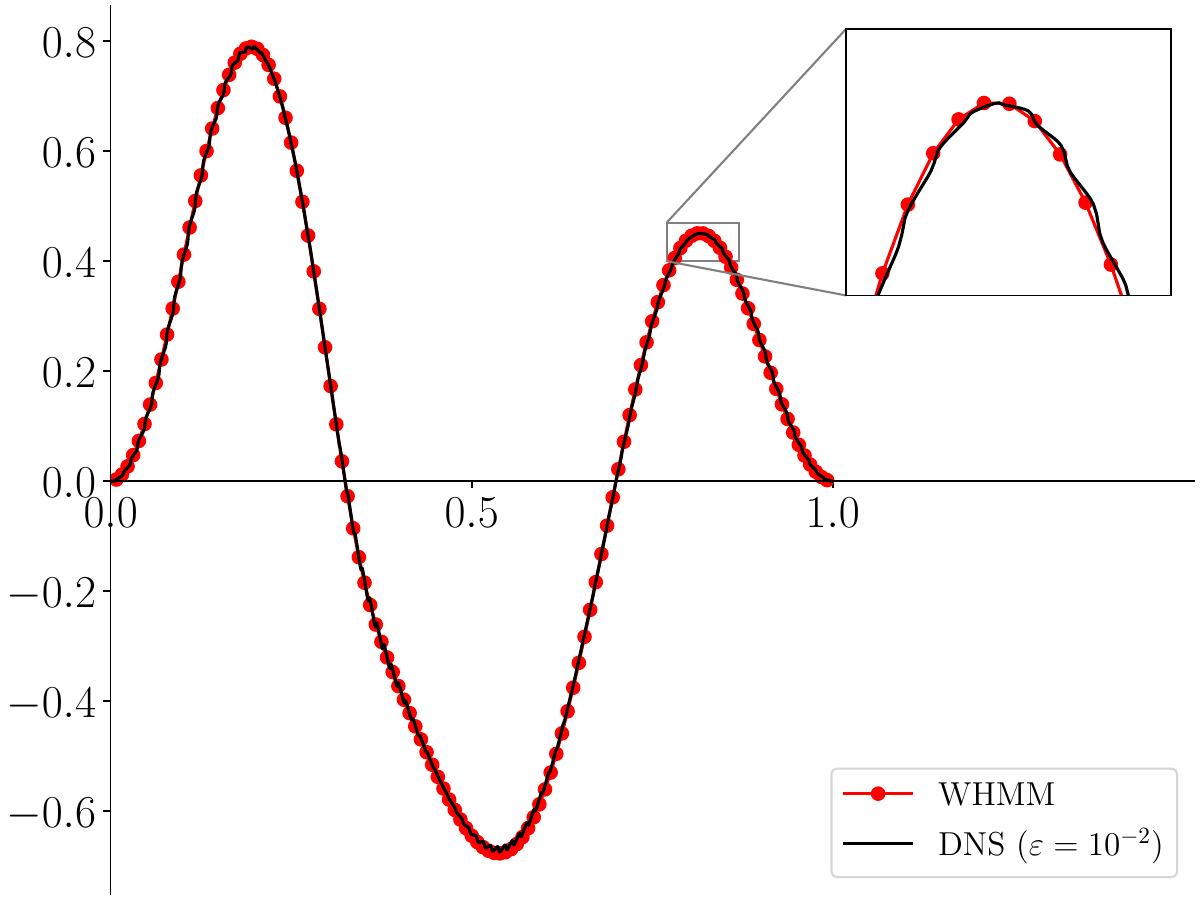} \\
  \includegraphics[height=1.4in]{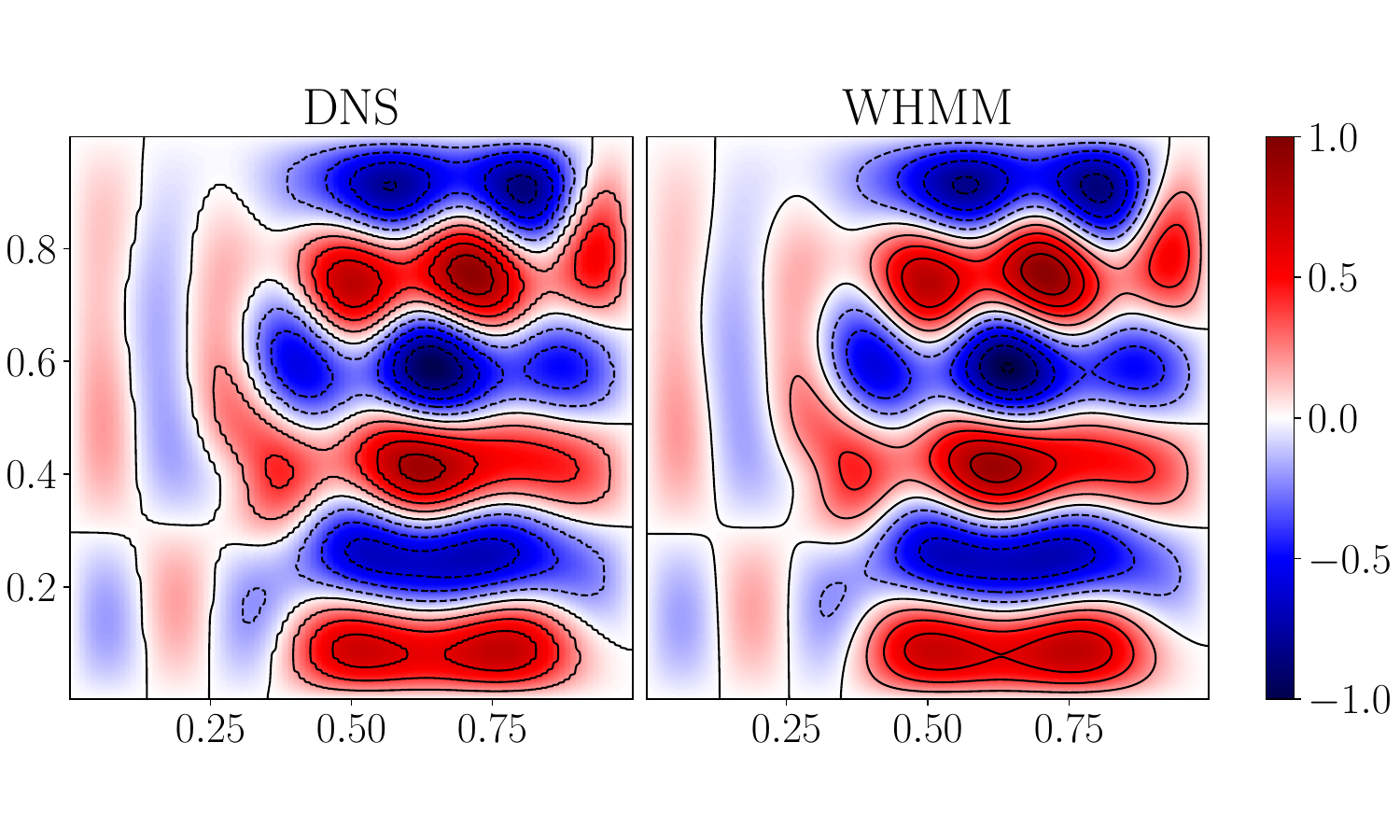}
  \includegraphics[height=1.4in]{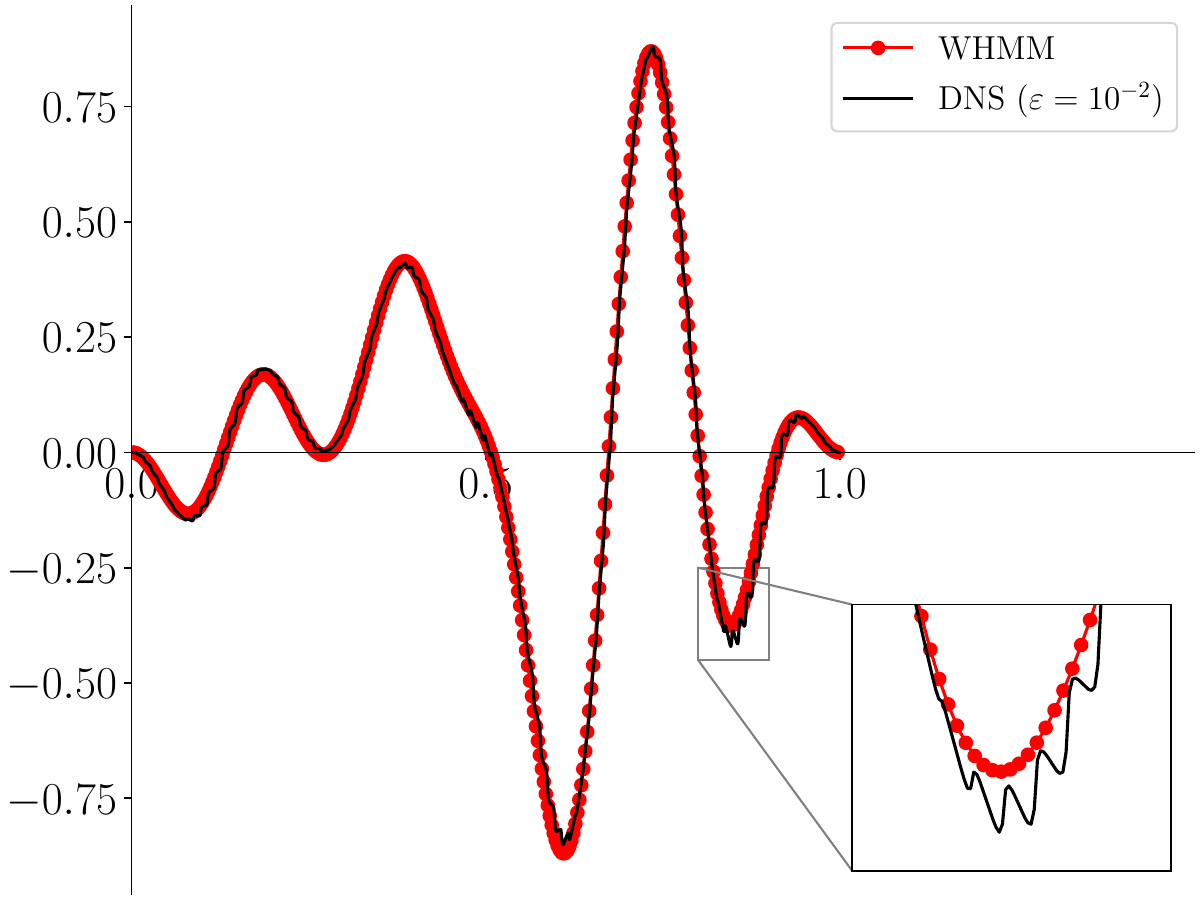} \\
  \caption{
    (top left) Contour plot of the solution for $\omega = 10$.
    (top right) The solution along $y = x$ for $\omega = 10$.
    (bottom left) Contour plot of the solution for $\omega = 20$.
    (bottom right) The solution along $y = x$ for $\omega = 20$.
  }
  \label{fig:P4_solution}
\end{figure}

\input{experiments/Periodic2D/P4_table}

%% file: experiments/Periodic2D/P4_table.tex
\begin{table}[h!]
  \centering
  \small
  \setlength{\tabcolsep}{4pt}
  \begin{tabular}{c|cc|cc|cc|}
    $M_x = M_y$ & \multicolumn{2}{c|}{${\omega} = 5.0$} & \multicolumn{2}{c|}{${\omega} = 10.0$} & \multicolumn{2}{c|}{${\omega} = 20.0$} \\
    \hline
    & rel. error & rate & rel. error & rate & rel. error & rate \\
    \hline
    50 & $2.68 \times 10^{-2}$ & $-$ & $1.76 \times 10^{-1}$ & $-$ & $7.97 \times 10^{-1}$ & $-$ \\
    100 & $8.12 \times 10^{-3}$ & $1.7$ & $4.49 \times 10^{-2}$ & $2.0$ & $2.54 \times 10^{-1}$ & $1.6$ \\
    200 & $2.44 \times 10^{-3}$ & $1.7$ & $1.12 \times 10^{-2}$ & $2.0$ & $7.24 \times 10^{-2}$ & $1.8$ \\
    400 & $9.16 \times 10^{-4}$ & $1.4$ & $2.75 \times 10^{-3}$ & $2.0$ & $1.73 \times 10^{-2}$ & $2.1$ \\

  \end{tabular}
  \caption{The relative error between the exact solution $\Bar{\vec{u}}$ and the WHMM solution $\vec{v}_h$.}
  \label{table:p4}
\end{table}

%% file: experiments/Smooth2D/smooth2d.tex
\subsection{2D Locally Periodic Coefficient}\label{sec:P5}

Consider problem \eqref{eq:model-2D} with coefficient
\[ a_\varepsilon(x) =  \frac{11}{10} + \frac{1}{2}\sin 2\pi x_1 + \frac{1}{2}\sin 2\pi \frac{x_1}{\varepsilon}. \]
For this problem, the homogenized coefficient is known to be the matrix-valued function:
\[ \bar{a}(x) =
  \begin{pmatrix}
    \sqrt{g(x)^2 - \frac{1}{4}} & 0 \\
    0 & g(x)
\end{pmatrix}, \qquad g(x) = \frac{11}{10} + \frac{1}{2}\sin 2\pi x_1. \]
Two example solutions are plotted in Figure \ref{fig:P5_solution}. Now, we use the $K^{(5,7)}$ kernel and take $\eta = \tau = 8\varepsilon$ with $h = \varepsilon / 32$ for the micro-problem. With this choice, we observe
\[ \frac{\max_{i,j} \|\alpha_{i,j} - \bar{a}_{i,j}\|_F}{\max_{i,j}\|\bar{a}_{i,j}\|_F} \sim 6.14 \times 10^{-5}. \]
In Table \ref{table:p5} we report the relative error of the WHMM solution $\vec{v}_h$ compared to the exact homogenized solution $\bar{\vec{u}}$ for various values of $\omega$ and various grid sizes. We roughly observe $O(H^2)$ convergence.

\begin{figure}
  \centering
  \includegraphics[height=1.4in]{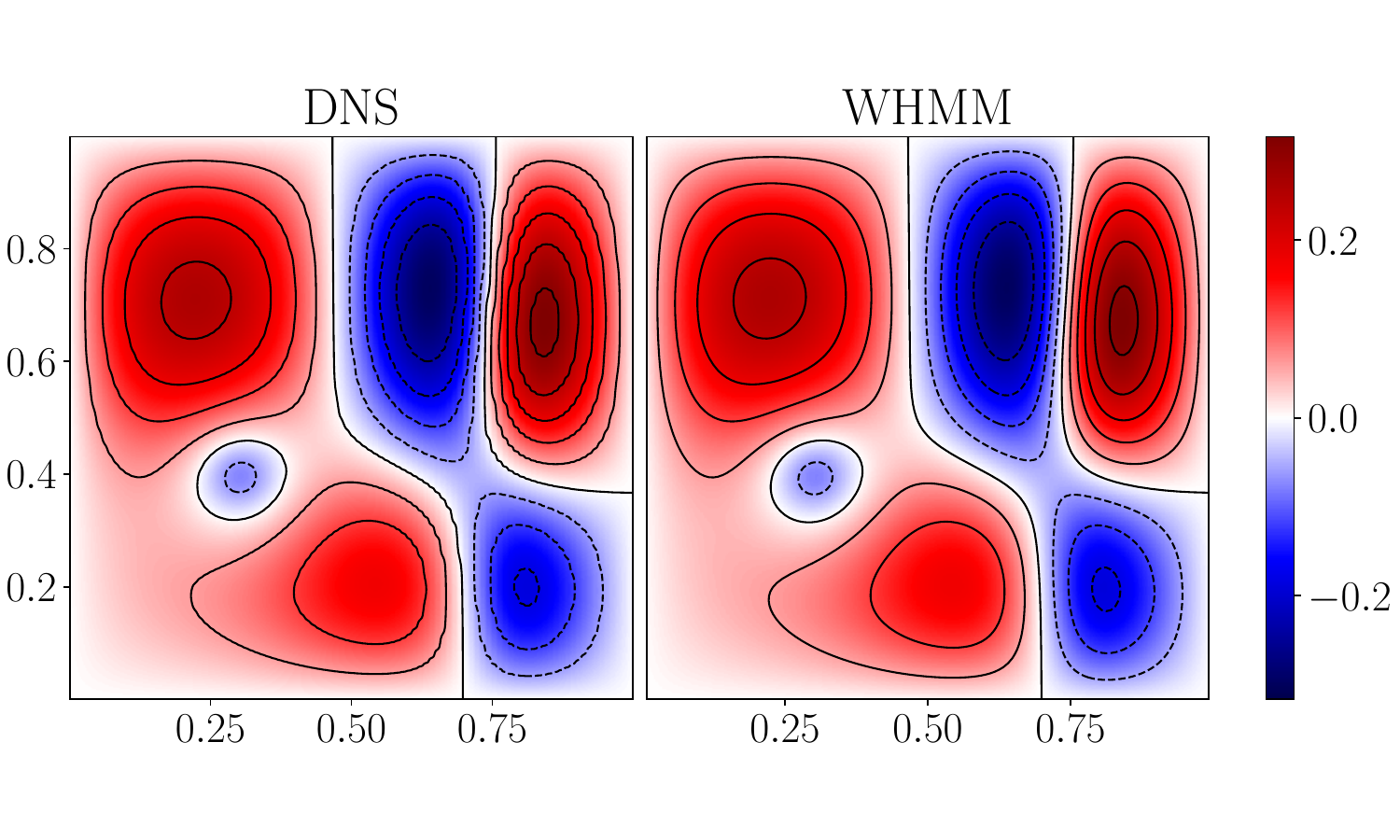}
  \includegraphics[height=1.4in]{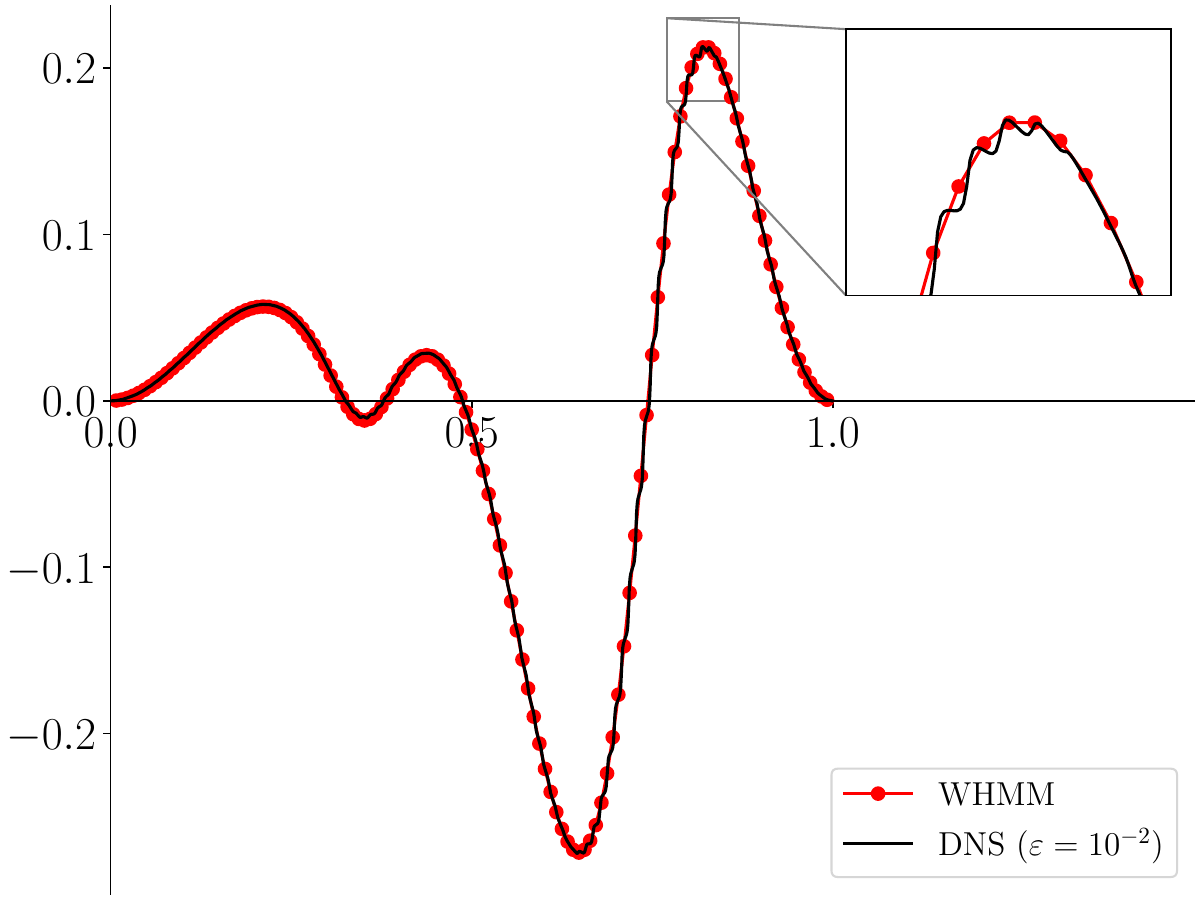} \\
  \includegraphics[height=1.4in]{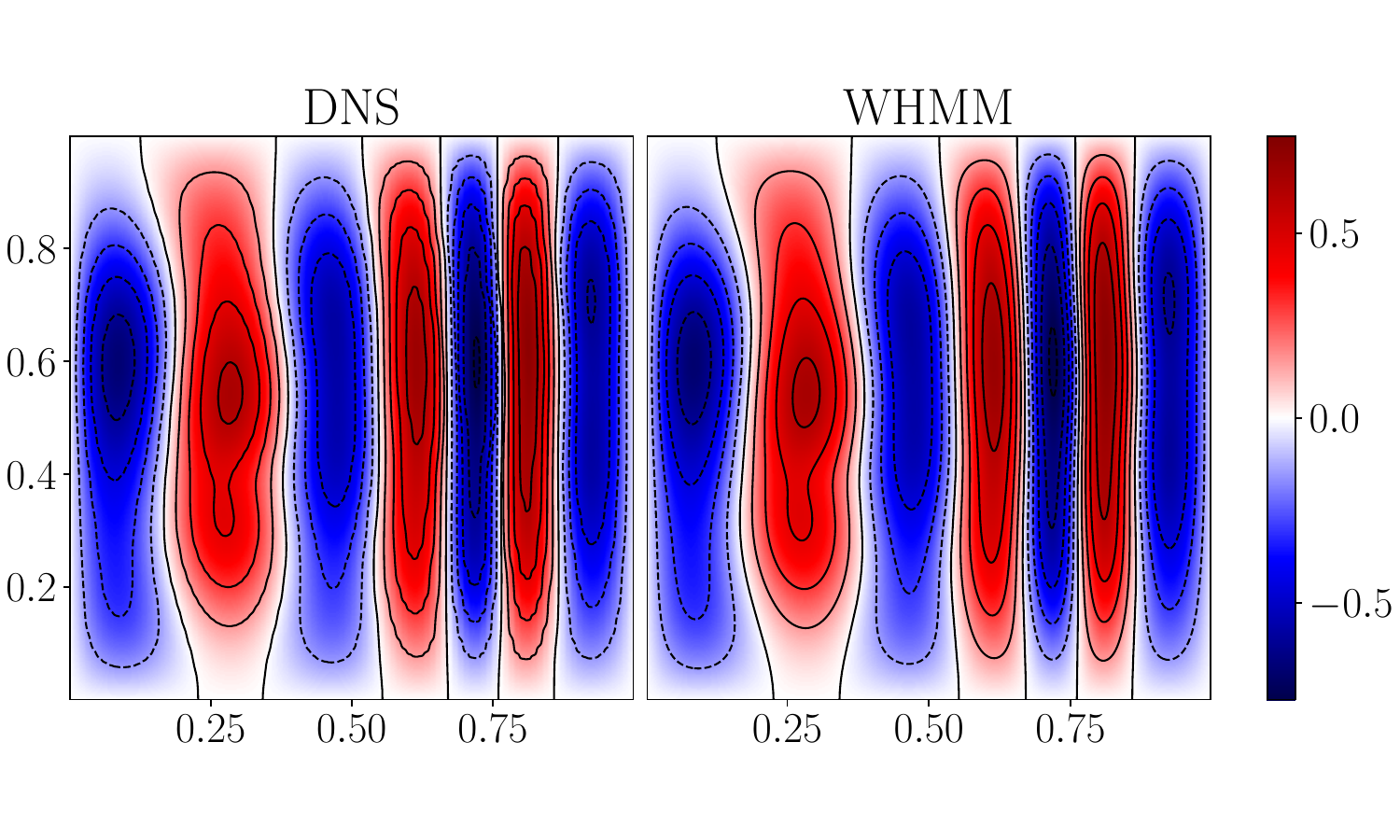}
  \includegraphics[height=1.4in]{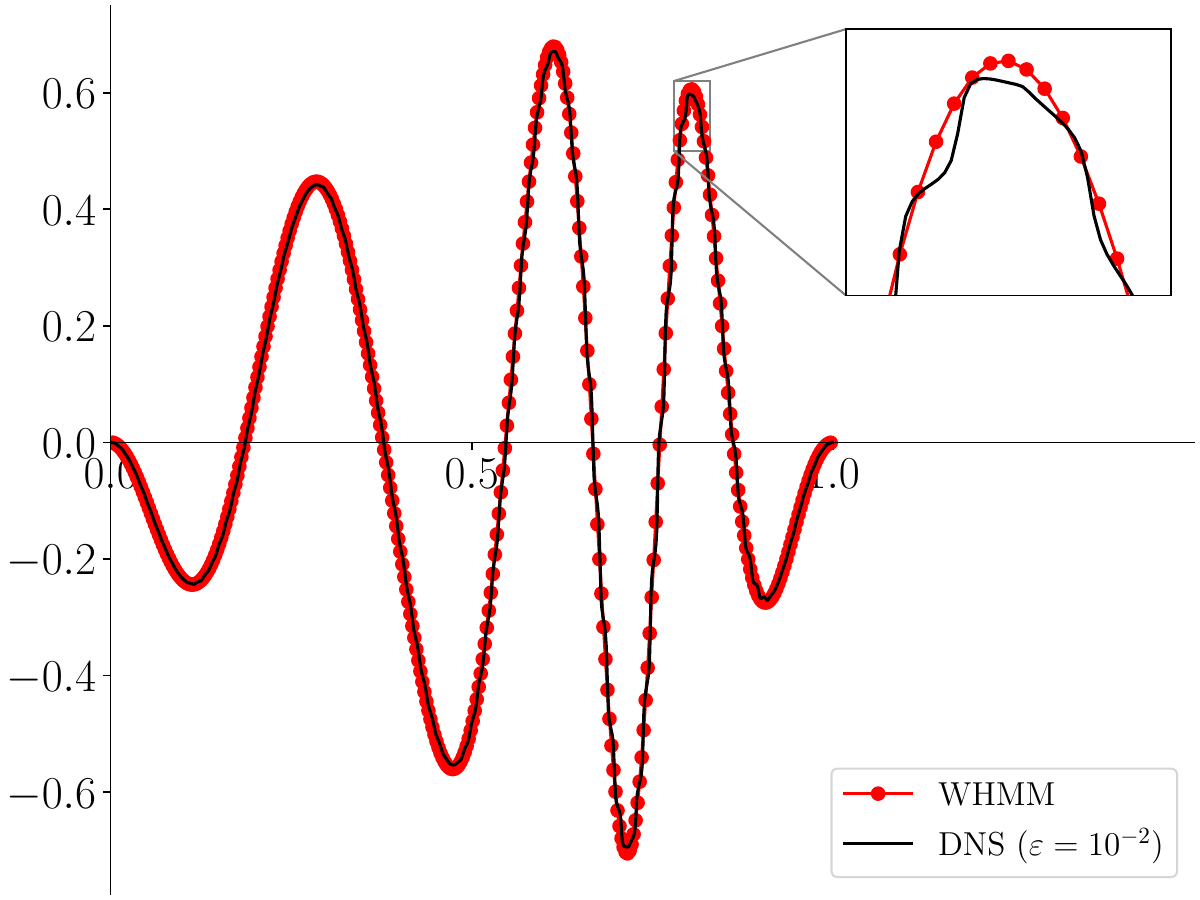} \\
  \caption{
    (top left) Contour plot of the solution for $\omega = 10$.
    (top right) The solution along $y = x$ for $\omega = 10$.
    (bottom left) Contour plot of the solution for $\omega = 20$.
    (bottom right) The solution along $y = x$ for $\omega = 20$.
  }
  \label{fig:P5_solution}
\end{figure}

\input{experiments/Smooth2D/P5_error_table}

%% file: experiments/Smooth2D/P5_error_table.tex
\begin{table}[h!]
  \centering
  \small
  \setlength{\tabcolsep}{4pt}
  \begin{tabular}{c|cc|cc|cc|}
    $M_x = M_y$ & \multicolumn{2}{c|}{${\omega} = 5.0$} & \multicolumn{2}{c|}{${\omega} = 10.0$} & \multicolumn{2}{c|}{${\omega} = 20.0$} \\
    \hline
    & rel. error & rate & rel. error & rate & rel. error & rate \\
    \hline
    50 & $1.51 \times 10^{-2}$ & $-$ & $6.52 \times 10^{-2}$ & $-$ & $6.87 \times 10^{-1}$ & $-$ \\
    100 & $5.06 \times 10^{-3}$ & $1.6$ & $1.79 \times 10^{-2}$ & $1.9$ & $3.55 \times 10^{-1}$ & $0.95$ \\
    200 & $1.68 \times 10^{-3}$ & $1.6$ & $5.14 \times 10^{-3}$ & $1.8$ & $1.21 \times 10^{-1}$ & $1.5$ \\
    400 & $5.41 \times 10^{-4}$ & $1.6$ & $1.49 \times 10^{-3}$ & $1.8$ & $3.27 \times 10^{-2}$ & $1.9$ \\
  \end{tabular}
  \caption{Error table for different $\omega$ values.}
  \label{table:p5}
\end{table}

%% file: Appendices/LocalAverageKernel.tex
\section{Local Average Kernel} \label{appendix:kernel}

A function $K : [-1,1]\to\R$ belongs to the class of functions $\mathbb{K}^{p,q}$ if it satisfies the moment conditions
\begin{subequations} \label{eq:moment-conditions}
  \begin{gather}
    \int_{-1}^1 K(x) \, dx = 1, \\
    \int_{-1}^1 K(x) x^k \, dx = 0, \quad k = 1, \dots, p,
  \end{gather}
\end{subequations}
as well as the compactness conditions
\begin{subequations} \label{eq:compactness-conditions}
  \begin{gather}
    K(\pm 1) = 0, \\
    K^{(\ell)}(\pm1) = 0, \quad \ell = 1, \dots, q.
  \end{gather}
\end{subequations}
Note that the moment conditions, \eqref{eq:moment-conditions}, are equivalent to the condition that for all polynomials $f$ of degree less than or equal to $p$,
\begin{equation} \label{eq:polynomial_delta}
  \int_{-1}^1 K(x) f(x) \, dx = f(0).
\end{equation}
That is, $K$ behaves like a delta function for degree $p$ polynomials. This form of the moment conditions will prove useful.
For using these kernels in computation, we are interested in finding a function $K \in \mathbb{K}^{p,q}$ which is easy to evaluate, namely, a polynomial. We can express such a kernel as the product:
\[ K(x) = (1 - x^2)^{1+q} \zeta(x), \]
where $\zeta$ is a polynomial of degree $p$. This $K$ is the minimum degree polynomial which satisfies \eqref{eq:moment-conditions} and \eqref{eq:compactness-conditions}. Note that the compactness conditions are automatically satisfied.
To determine $\zeta$, define the weighted inner product:
\[ \langle f, g \rangle = \int_{-1}^1 f(x) g(x) (1 - x^2)^{1+q} \, dx. \]
Recall that the Jacobi polynomials $P^{(1+q,1+q)}_n$ for $n \in \N$ are a complete orthogonal basis for the Hilbert space defined by this inner product. Write $\zeta$ as the sum of Jacobi polynomials:
\[ \zeta(x) = \sum_{n=0}^p \alpha_n P_n^{(1+q,1+q)}(x). \]
We then require that the moment condition \eqref{eq:polynomial_delta} is satisfied for each $P^{(1+q,1+q)}_n$, that is, for $m=0,1,\dots, p$
\begin{gather*}
  \int_{-1}^1 K(x) P_m^{(1+q,1+q)}(x) \, dx = P_m^{(1+q,1+q)}(0), \\
  \int_{-1}^1 \left\{ P_m^{(1+q,1+q)}(x) (1-x^2)^{1+q} \sum_{n=0}^p \alpha_n P_n^{(1+q,1+q)}(x) \right\} \, dx = P_m^{(1+q,1+q)}(0), \\
  \sum_{n=0}^p \alpha_n \int_{-1}^1 \left\{ P_m^{(1+q,1+q)}(x)P_n^{(1+q,1+q)}(x)(1-x^2)^{1+q} \right\} \, dx = P_m^{(1+q,1+q)}(0), \\
  \sum_{n=0}^p \alpha_n \left\langle P_m^{(1+q,1+q)}, P_n^{(1+q,1+q)} \right\rangle = P_m^{(1+q,1+q)}(0).
\end{gather*}
By orthogonality of the Jacobi polynomials, we have,
\[ \alpha_m \|P_m^{(1+q,1+q)}\|^2 = P_m^{(1+q,1+q)}(0). \]
Here $\|\cdot\|$ is the $w$ weighted norm. Recall that,
\[ \|P_m^{(1+q,1+q)}\|^2 = \frac{2^{3+2q}}{2m + 2q + 3} \, \frac{\Gamma(m+q+2)^2}{m! \cdot \Gamma(m+2q+3)}, \]
and
\[ P_m^{(1+q,1+q)}(0) =
  \begin{cases}
    0, & k \text{ odd}, \\
    \frac{\left(-\frac{1}{4}\right)^{m/2} \Gamma (m+q+2)}{\left(\frac{m}{2}\right)!\cdot \Gamma (\frac{m}{2}+q+2)}, &  k \text{ even}.
\end{cases} \]
This implies that $\alpha_m = 0$ for odd $m$. For even $m$, write $m = 2k$, then
\begin{align*}
  \alpha_{2k} &= \frac{\left(-\frac{1}{4}\right)^{k} \Gamma (2k+q+2)}{k!\cdot \Gamma (k+q+2)}\cdot \frac{(2k)! \Gamma(2k+2q+3)}{\Gamma(2k+q+2)^2} \cdot \frac{4k+2q+3}{2^{3+2q}} \\
  &= \frac{(-1)^k }{\pi}(4k+2q+3) 2^{2k-1} B\left(k+\frac{1}{2},k+q+\frac{3}{2}\right).
\end{align*}
Here $B$ is the beta function.
Thus, we have
\[ K(x) = \zeta(x)(1-x^2)^{1+q} = (1-x^2)^{1+q}\sum_{k=0}^{\lfloor \frac{p}{2} \rfloor} \alpha_{2k} P^{(1+q,1+q)}_{2k}(x). \]

The two terms recurrence for the Jacobi polynomials allows us to evaluate this sum efficiently.